\DeclareMathOperator{\sech}{sech}
\newtheorem{thm}{Theorem}[section]
\newtheorem{lm}[thm]{Lemma}
\newtheorem{pro}[thm]{Proposition}
\numberwithin{equation}{section}
\renewcommand\Re{\operatorname{Re}}
\renewcommand\Im{\operatorname{Im}}
\newtheorem{App}[thm]{}
\def\xx{{\mathrm x}}
\def\T{\mathcal{T}}
\begin{document}

\title{A Finite Difference Scheme for (2+1)D Cubic-Quintic Nonlinear Schr\"odinger Equations  with Nonlinear Damping}

\author{Anh-Ha Le$^{1,4}$}

\author{Toan T. Huynh$^{2}$}

\author{Quan M. Nguyen$^{3,4}$}
\email[Corresponding author. E-mail address: ]{quannm@hcmiu.edu.vn (Q.M. Nguyen)}

\affiliation{$^{1}$Faculty of Mathematics and Computer Science, University of Science, Ho Chi Minh City, Vietnam}

\affiliation{$^{2}$Department of Mathematics, University of Medicine and Pharmacy at Ho Chi Minh City, Ho Chi Minh City, Vietnam}

\affiliation{$^{3}$Department of Mathematics, International University, Ho Chi Minh City, Vietnam}

\affiliation{$^{4}$Vietnam National University, Ho Chi Minh City, Vietnam}

\date{\today}

\begin{abstract}
Solitons of the purely cubic nonlinear Schr\"odinger equation in a space dimension of $n \geq 2$ suffer critical and supercritical collapses. These solitons can be stabilized in a cubic-quintic nonlinear medium. In this paper, we analyze the Crank-Nicolson finite difference scheme for the (2+1)D cubic-quintic nonlinear Schr\"odinger equation with cubic damping. We show that both the discrete solution, in the discrete $L^2$-norm, and discrete energy are bounded. By using appropriate settings and estimations, the existence and the uniqueness of the numerical solution are proved. In addition, the error estimations are established in terms of second order for both space and time in discrete $L^2$-norm and $H^1$-norm. Numerical simulations for the (2+1)D cubic-quintic nonlinear Schr\"odinger equation with cubic damping are conducted to validate the convergence.
\end{abstract}

\keywords{Crank-Nicolson finite difference, (2+1)D Nonlinear Schr\"odinger equation, 2D soliton, Nonlinear damping}

\maketitle
%\newpage

%\maketitle

\section{Introduction}\label{Intro}
In this paper, we establish and analyze the Crank-Nicolson finite difference scheme for the (2+1)-dimensional ((2+1)D) cubic-quintic nonlinear Schr\"odinger (CQNLS) equation with nonlinear damping. The nonlinear Schr\"odinger equation (NLS) is one of the most ubiquitous nonlinear equations in mathematics and physics. It describes the dynamics of solitons in diverse physical models such as nonlinear optics, nanophotonics, plasma physics, and Bose-Einstein condensates \cite{Ablowitz_2011}. The CQNLS possesses a flat-top soliton solution, which appears in various nonlinear dispersive media such as nonlinear optics, fluid physics, plasma physics, and Bose-Einstein condensates \cite{Kevrekidis_2008, Hora_2004}. It has been discovered that the 2D flat-top soliton, which is described by the (2+1)D CQNLS equation, can exist in cubic-quintic nonlinear media \cite{Malomed2012, PRL2013, Skarka_PRE1997}.

Since the 1980s, many numerical schemes for NLS-type equations have been investigated extensively, e.g, the finite difference methods in Refs. \cite{Akrivis1993, Bao2003, SS84, Sun2010, Weideman1997}, the Ablowitz-Ladik scheme in \cite{Taha1984}, Crank-Nicholson Galerkin finite element in \cite{Henning2017, Tou91, Wang2014}, the pseudo-spectral split-step method in \cite{Lubich2008, Weideman1986}, the relaxation method in \cite{Besse2004}, the time-integrator scheme in \cite{Besse2021, Henning2023}. In particular, an error estimation of the relaxation finite difference scheme for the (1+1)D NLS equation has been proposed recently in \cite{Zouraris2023}.  An extension of the numerical method for solving (1+1)D NLS equations to $(n+1)$D NLS equations, for $n \geq 2$, is very promising and challenging. For the $(n+1)$D NLS equation in a high dimensional space with $n \geq 2$, there is a rich literature on the numerical treatments presented in \cite{Bao2013a, Besse2021, Bao2013b, Henning2023, Henning2021, Lubich2008, Thalhammer2008, Wang2014} and references therein. In \cite{Wang2014}, the author studied linearized Crank-Nicolson Galerkin FEMs, which is an Adams-Bashforth-type linearization of the Crank-
Nicolson method  for a generalized
nonlinear Schr\"odinger equation in two-dimensional and three-dimensional space. Additionally, the time-splitting spectral (TSSP) scheme is also a common method for numerically solving the damped ($n$+1)D NLS-type equation \cite{Bao2013a, Bao2003B}. The TSSP is very efficient and performs very well for high regularity solutions.  However, the performance of the TSSP method can drop dramatically when the solution has low regularity \cite{Knoller2019}. It has been shown that both TSSP and CNFD schemes hold the mass conservation \cite{Bao2013a}. However, the TSSP scheme does not hold the conservation of energy \cite{Bao2013a}. Recently, the numerical integration in time of nonlinear
Schr\"odinger equations using the method of preserving energy was introduced in \cite{Besse2021}. Also, an optimal H$^1$-error estimates for Crank-Nicolson approximations to the nonlinear Schr\"odinger equation was proposed in \cite{Henning2021}. Additionally, the  uniform $L^{\infty}$-bounds for the continuous Galerkin discretization of the Gross–Pitaevskii equation with rotation recently has investigated in \cite{Henning2023}. Among these numerical methods, there have been proposed some finite difference schemes for the (2+1)D NLS equations with wave operator or with cubic nonlinearity (also known as Kerr nonlinearity) and a potential \cite{Bao2013a, Bao2012, Bao2013b}. More specifically, the {\it unperturbed} (2+1)D NLS and its specific version, the Gross-Pitaevskii equation with a potential in higher dimensional spaces describing 2D and 3D solitons in Bose-Einstein condensates, has been studied in Refs. \cite{Bao2013a, Bao2013b}. In \cite{Bao2013b}, the authors analyzed finite difference methods for the unperturbed Gross-Pitaevskii equation, i.e. the (2+1)D cubic NLS, with an angular momentum rotation term in two and three dimensions and obtained the optimal convergence rate. In this work, the numerical schemes for the perturbed (2+1)D cubic NLS were based on the methods of conservative Crank-Nicolson finite difference (CNFD) and semi-implicit finite difference (SIFD).

As aforementioned, the 2D soliton can be stabilized in a modified nonlinearity medium such as cubic-quintic and that they can be {\it perturbed} by some nonlinear processes such as cubic damping due to two-photon absorption. However, in spite of the importance of the cubic-quintic nonlinearity to stabilize 2D soliton, which is used recently in 2D materials, a rigorous study on numerical schemes of (2+1)D NLS equation with cubic-quintic nonlinearity and a damping perturbation (without an external potential) is {\it still lacking}. In fact, the CNFD scheme for the unperturbed (2+1)D cubic NLS equation with a potential has been proposed and studied in \cite{Bao2013b}. However, for the (2+1)D CQNLS equation with nonlinear damping, the Hamiltonian is not conserved when using the CNFD. Therefore, it is more difficult to show the existence and uniqueness of the numerical solution. Unlike the CNFD scheme for the (2+1)D cubic NLS equation with a potential, the perturbed (2+1)D CQNLS equation studied in this paper includes the interplay between the cubic-quintic nonlinearity and the nonlinear damping in a two-dimensional spatial space. As a result, the condition for stability and convergence can be changed. To date, there is no proof for the existence, uniqueness and error estimation for the (2+1)D CQNLS equation with nonlinear damping. Therefore, it certainly requires a rigorous analysis and a technical treatment for uniqueness and convergence of the numerical solution for this equation.

In this work, we aim to fill this gap. More specifically, we propose a Crank-Nicolson finite difference scheme for the (2+1)D CQNLS with a perturbation of cubic damping:
\begin{align}
	\left\{
	\begin{array}{cc}\label{eq:NLS}
		&i\frac{\partial u}{\partial t}  + \Delta u + \lambda u|u|^2 - \nu |u|^4u + i\varepsilon u |u|^2 = 0~~\forall \xx \in \Omega= (a,b)\times(c,d), ~t\in(0,T),\\
		&u(\xx,0) = u_0(\xx),~~\forall \xx \in \Omega,\\
		&u(\xx,t) = 0,~~~~\forall \xx \in \Gamma = \partial \Omega, ~ t\in [0,T],
	\end{array}
	\right.
\end{align}
where $\lambda$ is a real constant, $\varepsilon \geq 0$ and constant $\nu > 0$. The nonlinear-damping $i\varepsilon u |u|^2$ arises in many physical systems. In nonlinear optics, the nonlinear damping is also called nonlinear loss, which is due to multi-photon absorption. In BEC, a quintic nonlinear damping term corresponds to losses from condensate due to three-body inelastic recombinations \cite{Bao2004}. Additionally, the complex-Ginzburg-Landau equation with a nonlinear-damping term have been used to model Poiseuille flow, Rayleigh-Bernard convection, Taylor-Couette flow, and superconductivity \cite{{Fibich2011}}. For $\varepsilon = 0$, we will show that the proposed numerical scheme maintains the discrete mass conservation and the discrete energy conservation.

It is well-known that for a linear wave equation, Crank-Nicolson finite difference scheme has the convergence rate of the second order in both time step and space step. However, for a nonlinear wave equation, one needs to propose a specific treatment for the existence of solution and the convergence. As a result, the rate of convergence can be different. In the current paper, we will present the boundedness of discrete solutions, the existence, the uniqueness of discrete solution, and the convergence rate in detail. We close this section by summarizing the structure of the remainder of the paper as follows. In section \ref{CNFD_subsect}, we present the discrete CNFD scheme for the evolution equation. In section \ref{Error and result}, we establish the main error estimate results for the numerical scheme, including the boundedness of discrete solutions, the existence and uniqueness of solutions, and the analysis of convergence rates. Section \ref{Num} is reserved for the results of numerical experiments and section \ref{Con} is for the conclusion. In \ref{Appendix}, we show the consistency of the CNFD scheme.

\section{Crank-Nicolson finite difference scheme}
\label{CNFD_subsect}

The Crank-Nicolson finite difference scheme is a widely used numerical method for approximating partial differential equations. It offers several advantages, including improved stability and accuracy compared to explicit schemes. In this section, we provide a detailed explanation of the formulation of the Crank-Nicolson scheme in time and finite difference in space for equation \eqref{eq:NLS}.

On the space domain $\Omega$, we use a uniform Cartesian grid consisting of grid points $(x_{j},y_{k})$ on $\Omega$, where $$x_j = a+j\Delta x = a+j\frac{b-a}{J} \text{ and } y_k = c+k\Delta y =c+k\frac{d-c}{K},~\forall (j,k)\in \overline{0,J}\times \overline{0,K}.$$
Let us define the mesh size $h := h_{max}= \max\{\Delta x,\Delta y\}$ and let $h_{min} = \min\{\Delta x, \Delta y\}$. On the time domain $[0, T]$, we define the time step $\tau$ as 
\begin{align*}
	t_n=n\tau,~ n = 0,1,\cdots,N.
\end{align*}
Let $U_{j,k}^n$ represent an approximation of $u(x_j,y_k,t_n)$. To conveniently write the scheme and prove the convergence, the following discrete operators are defined
\begin{align*}
	&\delta^+_t u^n_{j,k} = \frac{u^{n+1}_{j,k}-u^n_{j,k}}{\tau},  \delta_x u_{j,k}^n= \frac{u^n_{j+1,k}-u^n_{j-1,k}}{2\Delta x},  \delta_x^+ u_{j,k}^n= \frac{u^n_{j+1,k}-u^n_{j,k}}{\Delta x}, \delta_x^- u_{j,k}^n= \frac{u^n_{j,k}-u^n_{j-1,k}}{\Delta x},\\
	&\delta_t u^n_{j,k} = \frac{u^{n+1}_{j,k}-u^{n-1}_{j,k}}{2\tau},  \delta_y u_{j,k}^n= \frac{u^n_{j,k+1}-u^n_{j,k-1}}{2\Delta y},  \delta_y^+ u_{j,k}^n= \frac{u^n_{j,k+1}-u^n_{j,k}}{\Delta y}, \delta_y^- u_{j,k}^n= \frac{u^n_{j,k}-u^n_{j,k-1}}{\Delta y},\\
	&\delta^2_{x} u_{j,k}^n  = \frac{u_{j+1,k}^n-2u_{j,k}^n+u_{j-1,k}^n}{(\Delta x)^2}, ~~\delta^2_{y} u_{j,k}^n  = \frac{u_{j,k+1}^n-2u_{j,k}^n+u_{j,k-1}^n}{(\Delta y)^2},\\
	&\delta u^n_{j,k} = (\delta_x u_{j,k}^n,\delta_y u_{j,k}^n),~~\delta^+ u^n_{j,k} = (\delta_x^+ u_{j,k}^n,\delta_y^+ u_{j,k}^n),~~\delta^2 u^n_{j,k} =  \delta^2_x u^n_{j,k} + \delta^2_y u^n_{j,k}.
\end{align*}
Setting
\begin{align*}
	&\T_{JK} = \big\{(j,k)| j =1,2,\cdots,J-1,~k=1,2,\cdots,K-1\big\},\\
	&\T^0_{JK} = \big\{(j,k)| j =0,1,\cdots,J,~k=0,1,\cdots,K\big\},\\
	&X_{JK}  = \left\{u = (u_{ik})_{(j,k)\in \T^0_{JK}}|  u_{0,k} = u_{J,k} = u_{j,0}=u_{j,K} = 0,~\forall (j,k)\in \T^0_{JK}\right\},
\end{align*}
equipped with inner products and norms  defined by
\begin{align*}
	&(u,v)_h = \Delta x\Delta y\sum_{j=0}^{J-1}\sum_{k=0}^{K-1}u_{j,k}\bar{v}_{j,k},~\|u\|^2_{2,h} = ( u,u)_h,~\|u\|^p_{p,h} = \Delta x \Delta y\sum_{j=0}^{J-1}\sum_{k=0}^{K-1}|u_{j,k}|^p,\\
	& ~\langle u,v\rangle_h = \Delta x\Delta y\sum_{j=1}^{J-1}\sum_{k=1}^{K-1}u_{j,k}\bar{v}_{j,k},~\|u\|_{\infty,h} = \sup_{(j,k)\in\T^0_{JK}}|u_{j,k}|,\\
	&(\delta^+u,\delta^+v)_h = (\delta^+_xu,\delta^+_xv)_h + (\delta^+_yu,\delta^+_yv)_h,~\|\delta^+u\|^2_{2,h}= \|\delta_x^+ u\|^2_{2,h} + \|\delta_y^+ u\|^2_{2,h}.
\end{align*}
The NLS in \eqref{eq:NLS} can be discretized  by the CNFD scheme as follows
\begin{align}
	\left\{
	\begin{array}{c}
		i\delta_t^+U^n_{j,k}+\delta^2 U_{j,k}^{n+1/2} + \lambda \psi_1(U^{n+1}_{j,k},U^n_{j,k})-\nu\psi_2(U^{n+1}_{j,k},U^n_{j,k}) +i\varepsilon \varphi(U^{n+1}_{j,k},U^n_{j,k}) =0, \\
		U^0_{j,k} = u_0(x_j,y_k);~ U^{n+1}\in X_{JK},
	\end{array}
	\right.\label{CNFD}
\end{align}
where $U^{n+1/2}_{j,k} = \frac{1}{2}(U^{n+1}_{j,k}+U^n_{j,k})$,  for all $(j,k) \in \T_{JK}$ and $n=\overline{0,N-1}$. Additionally, in \eqref{CNFD}, the functions $\psi_1$, $\psi_2$, and $\varphi$ are defined as follows
\begin{align}
	\psi_1(z,w)=\frac{F_1(|z|^2)-F_1(|w|^2)}{|z|^2-|w|^2}\frac{z+w}{2},~~\psi_2(z,w)=\frac{F_2(|z|^2)-F_2(|w|^2)}{|z|^2-|w|^2}\frac{z+w}{2},\label{df:psi1psi}
\end{align}
or 
\begin{align*}
	\psi_1(z,w)&=\frac{z+w}{2}\int^1_0f_1(|w|^2+t(|z|^2-|w|^2))dt=\frac{|z|^2+|w|^2}{2}\frac{z+w}{2},\\
	\psi_2(z,w)&=\frac{z+w}{2}\int^1_0f_2(|w|^2+t(|z|^2-|w|^2))dt=\frac{|z|^4+|z|^2|w|^2+|w|^4}{3}\frac{z+w}{2},
\end{align*}
with 
\begin{align}
	f_1(s)=s, f_2(s)= s^2, F_1(\rho)=\int_0^\rho f_1(s)ds=\frac{s^2}{2}, F_2(\rho)=\int_0^\rho f_2(s)ds=\frac{s^3}{3},\label{df_f1f2_F1F2}
\end{align}
and
\begin{align}
	\varphi(z,w) = \left|\frac{z+w}{2}\right|^2\frac{z+w}{2}.\label{df:varphi}
\end{align}
Furthermore, the discrete energy is defined by
\begin{align}
	E_h(U^n) &= \|\delta^+U^{n}\|^2_{2,h} -\frac{\lambda}{2}  \|U^n\|^4_{4,h}+\frac{\nu}{3}\|U^n\|^6_{6,h}.\label{df:E_h(U^n)}
\end{align}
These settings above play a crucial role in our analysis.
%And SIFD scheme
%\begin{align}\label{SIFD}
%\left\{
%\begin{array}{c}
%i\delta_t U^n_{j,k}+ (i\dd\cdot\delta + \delta^2)\frac{U^{n+1}_{jk}+U^{n-1}_{jk}}{2} + \lambda U^n_{i,k}|U^{n}_{j,k}|^2 + i\varepsilon U^n_{i,k}|U^{n}_{j,k}|^2 =0 ~~\forall (j,k) \in \T_{JK}\\
%U^0_{j,k} = u_0(x_j,y_k), ~\forall (j,k) \in \T^0_{JK}; U^{n+1}\in X_{JK}
%\end{array}
%\right.
%\end{align}
%with $U^1\in X_{JK}$ is calculated 
%\begin{align}\label{SIFD1}
%\left\{
%\begin{array}{c}
%i\frac{U^1_{j,k}-U^0_{j,k}}{\tau}+ (i\dd\cdot\delta + \delta^2)U^{(1)}_{j,k} + \lambda U^{(1)}_{j,k}|U^{(1)}_{j,k}|^2 + i\varepsilon U^{(1)}_{j,k}|U^{(1)}_{j,k}|^2=0 ~~\forall (j,k) \in \T_{JK}\\
%i\frac{U^{(1)}_{j,k}-U^0_{j,k}}{\tau/2}+ (i\dd\cdot\delta + \delta^2)U^{0}_{j,k} + \lambda U^{0}_{j,k}|U^{0}_{j,k}|^2+ i\varepsilon  U^{0}_{j,k}|U^{0}_{j,k}|^2 =0 ~~\forall (j,k) \in \T_{JK}\\
%\end{array}
%\right.
%\end{align}

\section{Error Estimates and main results}
\label{Error and result}
In this section we establish the error estimates and the boundedness of the discrete solution and discrete energy. We then show the existence and the uniqueness of the numerical solution. Finally, we prove the second order convergence which is the main result of the paper. 

\subsection{Error Estimates}
\label{Error}
First, we establish some error estimates which are useful for estimating the discrete energy and for proving the uniqueness solution. From the definitions of  $(\cdot,\cdot)_h$ and $\langle\cdot,\cdot\rangle_h$, for $u,v\in X_{JK}$, it can be shown for the following discrete equalities \cite{Bao2013b}:
\begin{align}
	\langle \delta_x u,v \rangle_h =- \langle  u, \delta_x v \rangle_h,~~~~\langle \delta^2_x u,v \rangle_h = - ( \delta_x^+ u,\delta^+_x v)_h\label{discretegreen_x},\\
	\langle \delta_y u,v \rangle_h =- \langle u, \delta_y v \rangle_h,~~~~\langle \delta^2_y u,v \rangle_h = - ( \delta_y^+ u,\delta^+_y v)_h\label{discretegreen_y}.
	%\|u\|_{2,h}^2\leq \|\delta^+u\|^2_{2,h}, ~~~\|u\|^4_{4,h}\leq \|u\|_{2,h}^2\|\delta^+u\|^2_{2,h}.\label{ineq:poincare1}
	%	\|u\|_{2,h} = |\|u\||_{2,h},~~|\|\delta_xu\||_{2,h}\leq \|\delta^+_xu\|_{2,h}, |\|\delta_yu\||_{2,h}\leq \|\delta^+_yu\|_{2,h}	
\end{align}
Next, we present two estimates for any $u\in X_{JK}$ in the following Lemma. 
\begin{lm}\label{lm:est_normu2p}
	For any $u\in X_{JK}$, we have the following estimations for $u$:
	\begin{align}
		\|u\|^4_{4,h}\leq &\frac{1}{2}\|u\|_{2,h}^2\|\delta^+u\|^2_{2,h},\label{ineq:poincare1}\\
		\|u\|^{8}_{8,h}\leq  & 2\|u\|^{6}_{6,h}\|\delta^+ u\|^{2}_{2,h}.\label{est_normu2p_2}
	\end{align}
\end{lm}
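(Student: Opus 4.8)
The plan is to treat both bounds as discrete analogues of the two-dimensional Ladyzhenskaya / Gagliardo--Nirenberg inequalities, proved by a one-dimensional slicing argument in each coordinate direction followed by a discrete Cauchy--Schwarz inequality. The essential point is that, because $u\in X_{JK}$ vanishes on the boundary, each value $|u_{j,k}|^2$ can be controlled by the total variation of $|u|^2$ along the grid line through $(j,k)$, in either the $x$- or the $y$-direction.

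First I would establish the slicing estimate. Fix $k$ and set $g_l=|u_{l,k}|^2$; since $g_0=g_J=0$, telescoping from both ends gives $2g_j=\sum_{l=1}^{j}(g_l-g_{l-1})-\sum_{l=j+1}^{J}(g_l-g_{l-1})$, hence $|u_{j,k}|^2\le \tfrac12\sum_{l=1}^{J}\big||u_{l,k}|^2-|u_{l-1,k}|^2\big|$. The elementary identity $|u_{l,k}|^2-|u_{l-1,k}|^2=\Re\big[(u_{l,k}+u_{l-1,k})\overline{(u_{l,k}-u_{l-1,k})}\big]$ then yields the pointwise bound
\begin{align*}
|u_{j,k}|^2\le P_k:=\tfrac12\sum_{l=1}^{J}\big(|u_{l,k}|+|u_{l-1,k}|\big)\,|u_{l,k}-u_{l-1,k}|,
\end{align*}
which is independent of $j$, and symmetrically $|u_{j,k}|^2\le Q_j$ obtained by slicing in $y$. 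Multiplying the two bounds gives $|u_{j,k}|^4\le P_kQ_j$, so that $\sum_{j,k}|u_{j,k}|^4\le\big(\sum_k P_k\big)\big(\sum_j Q_j\big)$.

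It then remains to estimate each factor. Writing $|u_{l,k}-u_{l-1,k}|=\Delta x\,|\delta_x^+u_{l-1,k}|$ and applying the discrete Cauchy--Schwarz inequality, together with $(a+b)^2\le 2(a^2+b^2)$ and the fact that each interior node is counted twice, I expect $\sum_k P_k\le \Delta y^{-1}\|u\|_{2,h}\|\delta_x^+u\|_{2,h}$ and $\sum_j Q_j\le \Delta x^{-1}\|u\|_{2,h}\|\delta_y^+u\|_{2,h}$. Multiplying by $\Delta x\Delta y$ the mesh factors cancel, leaving $\|u\|_{4,h}^4\le \|u\|_{2,h}^2\|\delta_x^+u\|_{2,h}\|\delta_y^+u\|_{2,h}$, and a final application of $ab\le\tfrac12(a^2+b^2)$ converts the product of directional derivative norms into $\tfrac12\|\delta^+u\|_{2,h}^2$, giving \eqref{ineq:poincare1}. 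The inequality \eqref{est_normu2p_2} is proved by the identical scheme applied to $|u|^4$ instead of $|u|^2$: slicing gives $|u_{j,k}|^4\le \tfrac12\sum_l\big||u_{l,k}|^4-|u_{l-1,k}|^4\big|$, and Cauchy--Schwarz separates a derivative factor from a factor governed by $\big(|u_{l,k}|^2+|u_{l-1,k}|^2\big)\big(|u_{l,k}|+|u_{l-1,k}|\big)$.

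The main obstacle, and the step needing the most care, is the bookkeeping of the constants: one must track the powers of $\Delta x$ and $\Delta y$ through the Cauchy--Schwarz step so that they cancel exactly against the prefactor $\Delta x\Delta y$, and for \eqref{est_normu2p_2} one needs the sharp homogeneous pointwise inequality $\big(a^2+b^2\big)^2\big(a+b\big)^2\le 8\big(a^6+b^6\big)$ for $a,b\ge0$ (with equality at $a=b$) in order to land precisely on the constant $2$ with the norm $\|u\|_{6,h}^6$. Everything else is the routine slicing-and-Cauchy--Schwarz machinery.
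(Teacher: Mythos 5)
Your proposal is correct and follows essentially the same route as the paper's proof: telescoping $|u_{j,k}|^p$ from both boundary ends along each grid line, applying Cauchy--Schwarz to separate the derivative factor, multiplying the $x$- and $y$-slice bounds, splitting the double sum, and finishing with $ab\le\tfrac12(a^2+b^2)$. The only difference is presentational --- the paper runs the argument once for a general power $p$ (obtaining $\|u\|^{2p}_{2p,h}\le\tfrac{p^2}{8}\|u\|^{2(p-1)}_{2(p-1),h}\|\delta^+u\|^2_{2,h}$ and setting $p=2,4$), whereas you treat the two cases separately; your pointwise inequality $(a^2+b^2)^2(a+b)^2\le 8(a^6+b^6)$ is exactly the $p=4$ instance of the paper's bound on the factored sum, so the constants agree.
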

\begin{proof}
	For $u\in X_{JK}$ and $p\in\mathbb{N}^+$, one has
	\begin{align*}
		|u_{j,k}|^{p}&=\bigg|\sum_{l=0}^{j-1}\big[|u_{l+1,k}|^{p}-|u_{l,k}|^{p}\big]\bigg| = \bigg|\sum_{l=0}^{j-1}(|u_{l+1,k}|-|u_{l,k}|)\bigg[\sum_{q=0}^{p-1}|u_{l+1,k}|^{p-1-q}|u_{l,k}|^{q}\bigg]\bigg|\\
		&\leq   \Delta x\sum_{l=0}^{j-1}|\delta_x^+u_{l,k}|\bigg[\sum_{q=0}^{p-1}|u_{l+1,k}|^{p-1-q}|u_{l,k}|^{q}\bigg].\nonumber
	\end{align*}
	Similarly, 
	\begin{align*}
		|u_{j,k}|^{p}
		&\leq   \Delta x\sum_{l=j}^{J-1}|\delta_x^+u_{l,k}|\bigg[\sum_{q=0}^{p-1}|u_{l+1,k}|^{p-1-q}|u_{l,k}|^{q}\bigg].\nonumber
	\end{align*}
	Summing over two above inequalities, there holds
	\begin{align*}
		2|u_{j,k}|^{p} &\leq   \Delta x\sum_{l=0}^{J-1}|\delta_x^+u_{l,k}|\bigg[\sum_{q=0}^{p-1}|u_{l+1,k}|^{p-1-q}|u_{l,k}|^{q}\bigg].
	\end{align*}
	Applying the Cauchy-Schwarz inequality, it leads to 
	\begin{align*}
		2|u_{j,k}|^{p} & \leq \Delta x\sqrt{\sum_{l=0}^{J-1}|\delta_x^+u_{l,k}|^2}\sqrt{\sum_{l=0}^{J-1}\bigg[\sum_{q=0}^{p-1}|u_{l+1,k}|^{p-1-q}|u_{l,k}|^{q}\bigg]^2}.
	\end{align*}
	Using the Cauchy inequality, we obtain
	\begin{align*}
		2|u_{j,k}|^{p}  & \leq \sqrt{p}\Delta x\sqrt{\sum_{l=0}^{J-1}|\delta_x^+u_{l,k}|^2}\sqrt{\sum_{l=0}^{J-1}\bigg[\sum_{q=0}^{p-1}|u_{l+1,k}|^{2(p-1-q)}|u_{l,k}|^{2q}\bigg]}\nonumber\\
		& \leq \frac{p}{\sqrt{2}}\Delta x\sqrt{\sum_{l=0}^{J-1}|\delta_x^+u_{l,k}|^2}\sqrt{\sum_{l=0}^{J-1}\bigg[|u_{l+1,k}|^{2(p-1)}+|u_{l,k}|^{2(p-1)}\bigg]}\nonumber\\
		& \leq p\Delta x\sqrt{\sum_{l=0}^{J-1}|\delta_x^+u_{l,k}|^2}\sqrt{\sum_{l=0}^{J-1}|u_{l,k}|^{2(p-1)} }.
	\end{align*}
	That is,
	\begin{align}
		|u_{j,k}|^{p} 
		& \leq \frac{p}{2}\Delta x\sqrt{\sum_{l=0}^{J-1}|\delta_x^+u_{l,k}|^2}\sqrt{\sum_{l=0}^{J-1}|u_{l,k}|^{2(p-1)} }.\label{ineq:u^px_2}
	\end{align}
	Similarly, one can obtain
	\begin{align}
		|u_{j,k}|^p \leq \frac{p}{2}\Delta y\sqrt{\sum_{s=0}^{K-1}|\delta_y^+u_{j,s}|^2}\sqrt{\sum_{s=0}^{K-1}|u_{j,s}|^{2(p-1)}}.\label{ineq:u^py_2}
	\end{align}
	Combining \eqref{ineq:u^px_2} and \eqref{ineq:u^py_2} and using the Cauchy inequality, it implies
	\begin{align*}
		\|u\|^{2p}_{2p,h}&=\Delta x\Delta y\sum_{j=0,k=0}^{J-1,K-1}|u_{j,k}|^{2p} = \Delta x\Delta y\sum_{j=0,k=0}^{J-1,K-1}|u_{j,k}|^{p}\cdot |u_{j,k}|^{p}\nonumber\\
		&\leq \frac{p^2}{4}(\Delta x\Delta y)^2\sum_{j=0,k=0}^{J-1,K-1}\left[\sqrt{\sum_{l=0}^{J-1}|\delta_x^+u_{l,k}|^2}\sqrt{\sum_{l=0}^{J-1}|u_{l,k}|^{2(p-1)}} \right]\nonumber\\ 
		&\times\left[\sqrt{\sum_{s=0}^{K-1}|\delta_y^+u_{j,s}|^2}\sqrt{\sum_{s=0}^{K-1}|u_{j,s}|^{2(p-1)}}\right].\nonumber\\
	\end{align*} 
	The terms $\displaystyle \sqrt{\sum_{l=0}^{J-1}|\delta_x^+u_{l,k}|^2}\sqrt{\sum_{l=0}^{J-1}|u_{l,k}|^{2(p-1)}}$ and $\displaystyle \sqrt{\sum_{s=0}^{K-1}|\delta_y^+u_{j,s}|^2}\sqrt{\sum_{s=0}^{K-1}|u_{j,s}|^{2(p-1)}}$  only depend on $k$ and $j$, respectively. Thus, there holds
	\begin{align*}
		\|u\|^{2p}_{2p,h}&\leq \frac{p^2}{4}(\Delta x\Delta y)^2 \sum_{k=0}^{K-1}\left[\sqrt{\sum_{l=0}^{J-1}|\delta_x^+u_{l,k}|^2}\sqrt{\sum_{l=0}^{J-1}|u_{l,k}|^{2(p-1)}}\right] \nonumber\\
		& \times\sum_{j=0}^{J-1}\left[\sqrt{\sum_{s=0}^{K-1}|\delta_y^+u_{j,s}|^2}\sqrt{\sum_{s=0}^{K-1}|u_{j,s}|^{2(p-1)}}\right].\nonumber
	\end{align*}
	Using the Cauchy-Schwarz inequality, it yields 
	\begin{align*}
		\|u\|^{2p}_{2p,h} &\leq \frac{p^2}{4}(\Delta x\Delta y)^2\sqrt{\sum_{k=0}^{K-1}\sum_{l=0}^{J-1}|\delta_x^+u_{l,k}|^2}\sqrt{\sum_{l=0}^{J-1}\sum_{k=0}^{K-1}|u_{l,k}|^{2(p-1)}}\nonumber\\
		&\times\sqrt{\sum_{j=0}^{J-1}\sum_{s=0}^{K-1}|\delta_y^+u_{j,s}|^2}\sqrt{\sum_{j=0}^{J-1}\sum_{s=0}^{K-1}|u_{j,s}|^{2(p-1)}}\nonumber\\
		&\leq \frac{p^2}{8}(\Delta x\Delta y)^2\bigg[ \sum_{j=0,k=0}^{J-1,K-1}|\delta_x^+u_{j,k}|^2\sum_{j=0,k=0}^{J-1,K-1}|u_{j,k}|^{2(p-1)}\\
		&+\sum_{j=0,k=0}^{J-1,K-1}|\delta_y^+u_{j,k}|^2\sum_{j=0,k=0}^{J-1,K-1}|u_{j,k}|^{2(p-1)}\bigg] \\
		&= \frac{p^2}{8}\|u\|^{2(p-1)}_{2(p-1),h} \|\delta^+ u\|^2_{2,h}.
	\end{align*}
	Therefore, it arrives at
	\begin{align*}
		\|u\|^{2p}_{2p,h} \leq \frac{p^2}{8}\|u\|^{2(p-1)}_{2(p-1),h}\|\delta^+ u\|^2_{2,h}.
	\end{align*}
	With $p=2$ and $p=4$, one obtains \eqref{ineq:poincare1} and \eqref{est_normu2p_2}, respectively.\\
 This completes the proof of Lemma \ref{lm:est_normu2p} for the estimation of any $u\in X_{JK}$. %\qed
\end{proof}

\subsection{Boundedness of discrete solution and energy}
In this subsection, we consider the discrete norm $\|\cdot\|_{2,h}$ of the solution and the  discrete energy in \eqref{df:E_h(U^n)}. We demonstrate the boundedness of the discrete solution in the discrete $L^2$-norm. For simplicity, we assume $\Delta x = \Delta y = h$. 
\begin{lm}[Boundedness of the discrete solution in the discrete $L^2$-norm]\label{lm:discreteConservativeMass}
	Let $\{U^n\}_{n=0}^{N}$ be a solution to the CNFD scheme (\ref{CNFD}), there holds for $n=1,\cdots,N$,
	\begin{align}
		\|U^n\|^2_{2,h} \leq \|U^0\|^2_{2,h}.\label{discreteConservativeMass}
	\end{align}
\end{lm}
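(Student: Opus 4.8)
The plan is to pair the scheme \eqref{CNFD} with the midpoint value $U^{n+1/2}$ in the interior inner product $\langle\cdot,\cdot\rangle_h$ and then extract the imaginary part, exploiting the fact that the conservative Crank--Nicolson nonlinearities $\psi_1,\psi_2$ are, by construction, real scalar multiples of $U^{n+1/2}$. First I would take $\langle\,\cdot\,,U^{n+1/2}\rangle_h$ of each term of \eqref{CNFD} and sum over $(j,k)\in\T_{JK}$; since the right-hand side is zero, the resulting scalar identity splits into real and imaginary parts, and only the imaginary part is needed.

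The argument is then a term-by-term accounting of signs. For the temporal term, writing $\delta_t^+U^n=(U^{n+1}-U^n)/\tau$ and $U^{n+1/2}=(U^{n+1}+U^n)/2$ gives
\[
\langle i\delta_t^+U^n,U^{n+1/2}\rangle_h=\frac{i}{2\tau}\big(\|U^{n+1}\|^2_{2,h}-\|U^n\|^2_{2,h}\big)-\frac{1}{\tau}\Im\langle U^{n+1},U^n\rangle_h,
\]
so its imaginary part is exactly $\tfrac{1}{2\tau}\big(\|U^{n+1}\|^2_{2,h}-\|U^n\|^2_{2,h}\big)$. For the Laplacian term, the discrete Green's identities \eqref{discretegreen_x}--\eqref{discretegreen_y} yield $\langle\delta^2U^{n+1/2},U^{n+1/2}\rangle_h=-\|\delta^+U^{n+1/2}\|^2_{2,h}$, which is real and thus contributes nothing to the imaginary part. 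For the cubic and quintic terms, the closed forms in \eqref{df:psi1psi} show that $\psi_1(U^{n+1},U^n)$ and $\psi_2(U^{n+1},U^n)$ are each a \emph{nonnegative real} factor times $U^{n+1/2}$, so pairing with $\overline{U^{n+1/2}}$ produces real quantities; with $\lambda$ real (and $\nu>0$ real) these terms are purely real and drop out of the imaginary part.

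The decisive term is the damping one: since $\varphi(U^{n+1},U^n)=|U^{n+1/2}|^2U^{n+1/2}$ by \eqref{df:varphi} and $U^{n+1/2}\in X_{JK}$ vanishes on $\Gamma$, pairing with $\overline{U^{n+1/2}}$ gives $\langle\varphi(U^{n+1},U^n),U^{n+1/2}\rangle_h=\|U^{n+1/2}\|^4_{4,h}\geq0$, so $\langle i\varepsilon\varphi,U^{n+1/2}\rangle_h$ is purely imaginary with imaginary part $\varepsilon\|U^{n+1/2}\|^4_{4,h}$. Collecting the imaginary parts of all five terms yields
\[
\frac{1}{2\tau}\big(\|U^{n+1}\|^2_{2,h}-\|U^n\|^2_{2,h}\big)+\varepsilon\|U^{n+1/2}\|^4_{4,h}=0,
\]
that is, $\|U^{n+1}\|^2_{2,h}=\|U^n\|^2_{2,h}-2\tau\varepsilon\|U^{n+1/2}\|^4_{4,h}\leq\|U^n\|^2_{2,h}$ because $\varepsilon\geq0$, and a straightforward induction on $n$ gives \eqref{discreteConservativeMass}. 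I do not expect a genuine analytic obstacle here; the only thing to get right is the bookkeeping of real versus imaginary parts, and in particular the recognition that the conservative discretization of the cubic and quintic nonlinearities is precisely what makes those terms vanish in the imaginary part, leaving the damping as the sole sign-definite contribution. When $\varepsilon=0$ the inequality becomes an equality, recovering the stated discrete mass conservation.
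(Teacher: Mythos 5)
Your proposal is correct and follows essentially the same route as the paper: pair the scheme with $U^{n+1/2}$ in $\langle\cdot,\cdot\rangle_h$, observe that the Laplacian and the conservative cubic/quintic terms contribute only real quantities, and read off the imaginary part to get $\frac{1}{2\tau}\big(\|U^{n+1}\|^2_{2,h}-\|U^n\|^2_{2,h}\big)+\varepsilon\|U^{n+1/2}\|^4_{4,h}=0$, from which the bound follows by summing (or induction) since $\varepsilon\geq 0$. The only cosmetic difference is that you note the real factors in $\psi_1,\psi_2$ are nonnegative, which is true but not needed; realness alone suffices.
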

\begin{proof}
	Taking in the first term on the left hand side of  scheme \eqref{CNFD} the inner product $\langle\cdot,\cdot\rangle_h$  with $U^{n+1/2}$, we get
	\begin{align}
		\!\!\!\!i\langle\delta_t^+U^n,U^{n+1/2}\rangle_h &= \frac{i}{2\tau}\langle U^{n+1}-U^n,U^{n+1}+U^n\rangle_h \nonumber\\
		&=i\frac{\|U^{n+1}\|^2_{2,h}-\|U^n\|^2_{2,h}}{2\tau} + \frac{i}{2\tau}(\langle U^{n+1},U^n\rangle_h-\langle U^{n},U^{n+1}\rangle_h).\label{eq:product:delta_tUn,Un+12}
	\end{align}
	Since
	$$
	\Re(\langle U^{n+1},U^n\rangle_h-\langle U^{n},U^{n+1}\rangle_h) = 0,
	$$
	one has
	\begin{align*}
		\Im\left(\frac{i}{2\tau}(\langle U^{n+1},U^n\rangle_h-\langle U^{n},U^{n+1}\rangle_h)\right) = 0.
	\end{align*}
	By taking the imaginary part of equation \eqref{eq:product:delta_tUn,Un+12}, it yields
	\begin{align}\label{impartdt}
		\Im(i\langle\delta_t^+U^n,U^{n+1/2}\rangle_h) =\frac{\|U^{n+1}\|^2_{2,h}-\|U^n\|^2_{2,h}}{2\tau}.
	\end{align}
	Using (\ref{discretegreen_x}) and (\ref{discretegreen_y}) and noting that $U^{n+1/2}\in X_{JK}$, we get
	\begin{align}
		\langle\delta^2_x U^{n+1/2},U^{n+1/2}\rangle_h = -(\delta^+_x U^{n+1/2},\delta^+_xU^{n+1/2})_h,\label{impartdiffusion_x}\\
		\langle\delta^2_y U^{n+1/2},U^{n+1/2}\rangle_h = -(\delta^+_y U^{n+1/2},\delta^+_y U^{n+1/2})_h.\label{impartdiffusion_y}
	\end{align}
	Adding \eqref{impartdiffusion_x} and \eqref{impartdiffusion_y} side by side and using the definitions of $\delta^2$ and $\|\delta^+\cdot\|_{2,h}$, there holds
	\begin{align}
		\langle\delta^2 U^{n+1/2},U^{n+1/2}\rangle_h = -\|\delta^+ U^{n+1/2}\|^2_{2,h}.\label{impartdiffusion}
	\end{align}
	Taking the inner product $\langle\cdot,\cdot\rangle_h$ of the third term, the fourth term, and the final term   in \eqref{CNFD} with $U^{n+1/2}$ and using the definitions of $\psi_1,\psi_2$ in \eqref{df:psi1psi} and $\varphi$ in \eqref{df:varphi}, one obtains
	\begin{align}
		\lambda\langle\psi_1(U^{n+1},U^n),U^{n+1/2}\rangle_h &= \lambda\left\langle \frac{F_1(|U^{n+1}|^2)-F_2(|U^n|^2)}{|U^{n+1}|^2-|U^n|^2},|U^{n+1/2}|^2 \right\rangle_h,\label{impart_cubicterm}\\
		\nu\langle\psi_2(U^{n+1},U^n),U^{n+1/2}\rangle_h &= \nu\left\langle \frac{F_2(|U^{n+1}|^2)-F_2(|U^n|^2)}{|U^{n+1}|^2-|U^n|^2},|U^{n+1/2}|^2 \right\rangle_h,\label{impart_qinticterm}\\
		i\varepsilon\langle\varphi(U^{n+1},U^n),U^{n+1/2}\rangle_h &= i\varepsilon\left\langle |U^{n+1/2}|^2,|U^{n+1/2}|^2 \right\rangle_h = \varepsilon\|U^{n+1/2}\|^4_{4,h}.\label{impart_cubiclossterm}
	\end{align}
	Totally, taking in (\ref{CNFD}) the inner product $\langle\cdot,\cdot\rangle_h$ with $U^{n+1/2}$, using (\ref{impartdt}), 
 (\ref{impartdiffusion}), (\ref{impart_cubicterm}),  \eqref{impart_qinticterm}, and  \eqref{impart_cubiclossterm}, and then taking the imaginary parts, it yields
	\begin{align*}%\label{eq:conversivemass}
		\frac{\|U^{n+1}\|^2_{2,h}- \|U^{n}\|^2_{2,h}}{2\tau} + \varepsilon\|U^{n+1/2}\|^4_{4,h}=0~~\forall n=0,1,\cdots,N-1.
	\end{align*}
	Summing the last equations over $n=0,1,\cdots,m-1$ with $m\in \overline{1,N}$, we get
	\begin{align}
		\frac{\|U^m\|^2_{2,h}-\|U^0\|^2_{2,h}}{2\tau} +\varepsilon\sum_{n=0}^{m-1}\|U^{n+1/2}\|^4_{4,h} = 0.\label{eq:sum_L4Un}
	\end{align}
	It then arrives at
	\begin{align*}
		\|U^{m}\|^2_{2,h} \leq \|U^{0}\|^2_{2,h}~~\forall m=1,2,\cdots,N.
	\end{align*}
	Thus, the estimate \eqref{discreteConservativeMass} for the boundedness of the discrete solution is proved.%\qed
\end{proof}
To obtain the boundedness of the discrete energy, we first show the following two lemmas. 
\begin{lm}\label{lm:bound_gradient_byEnergy}
	For all $U^m\in X_{JK}$, it holds that
	\begin{align}\label{lm:est_H01Un}
		\|\delta^+ U^m\|^2_{2,h} + \frac{\nu}{6}\|U^m\|^6_{6,h}  \leq E_h(U^m) + \frac{3\lambda^2}{8\nu}\|U^m\|^2_{2,h}.
	\end{align}
\end{lm}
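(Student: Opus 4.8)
The plan is to unwind the definition of the discrete energy and show that, after the gradient term cancels, the claimed bound is purely an inequality among the three norms $\|U^m\|_{2,h}$, $\|U^m\|_{4,h}$, and $\|U^m\|_{6,h}$. Substituting \eqref{df:E_h(U^n)} into the right-hand side and subtracting $\|\delta^+U^m\|^2_{2,h}$ from both sides, the inequality \eqref{lm:est_H01Un} is equivalent to
\begin{align*}
	\frac{\lambda}{2}\|U^m\|^4_{4,h}\leq \frac{\nu}{6}\|U^m\|^6_{6,h}+\frac{3\lambda^2}{8\nu}\|U^m\|^2_{2,h}.
\end{align*}
Notice that the term $\|\delta^+U^m\|^2_{2,h}$ drops out completely, so the lemma reduces to a Young/interpolation-type estimate that makes no reference to the discrete gradient. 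When $\lambda\leq 0$ the left-hand side is nonpositive while the right-hand side is nonnegative (since $\nu>0$), so the estimate is immediate; hence I would assume $\lambda>0$ from here on.

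For $\lambda>0$ I would first establish the discrete interpolation inequality
\begin{align*}
	\|U^m\|^4_{4,h}\leq \|U^m\|_{2,h}\,\|U^m\|^3_{6,h}.
\end{align*}
This follows from the discrete H\"older inequality applied with the conjugate exponents $2$ and $2$ to the factorization $|U^m_{j,k}|^4=|U^m_{j,k}|\cdot|U^m_{j,k}|^3$, summed over $(j,k)$; one only has to track the mesh factor $\Delta x\,\Delta y$ carefully, since the two resulting sums $\big(\sum|U^m|^2\big)^{1/2}$ and $\big(\sum|U^m|^6\big)^{1/2}$ each carry a factor $(\Delta x\,\Delta y)^{-1/2}$, whose product exactly restores the single $\Delta x\,\Delta y$ needed to reconstruct $\|U^m\|_{2,h}\|U^m\|^3_{6,h}$. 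With this in hand I would set $a=\|U^m\|_{2,h}$ and $b=\|U^m\|^3_{6,h}$ and apply Young's inequality in the form $ab\leq \frac{1}{2c}a^2+\frac{c}{2}b^2$ for a free parameter $c>0$, which yields
\begin{align*}
	\frac{\lambda}{2}\|U^m\|^4_{4,h}\leq \frac{\lambda}{2}\,ab\leq \frac{\lambda}{4c}\|U^m\|^2_{2,h}+\frac{\lambda c}{4}\|U^m\|^6_{6,h}.
\end{align*}
Choosing $c=\tfrac{2\nu}{3\lambda}$ makes $\frac{\lambda c}{4}=\frac{\nu}{6}$ and $\frac{\lambda}{4c}=\frac{3\lambda^2}{8\nu}$, matching the target coefficients exactly and closing the argument.

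I expect no serious obstacle here; the entire content is the reduction plus the correct choice of the Young parameter $c$. The only genuinely delicate points are bookkeeping: verifying that the discrete H\"older step produces precisely $\|U^m\|_{2,h}\|U^m\|^3_{6,h}$ with the right powers and mesh factors, and then tuning $c$ so that the two constants $\frac{\nu}{6}$ and $\frac{3\lambda^2}{8\nu}$ appear simultaneously rather than an inequality with a nonoptimal constant. Both are routine once the interpolation inequality is pinned down, and the sign split on $\lambda$ ensures the statement holds for all real $\lambda$ as claimed.
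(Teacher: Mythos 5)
Your proposal is correct and follows essentially the same route as the paper: both arguments split on the sign of $\lambda$, invoke the discrete interpolation inequality $\|U^m\|^4_{4,h}\leq \|U^m\|_{2,h}\|U^m\|^3_{6,h}$ (which the paper uses implicitly via Cauchy--Schwarz), and then absorb the quartic term with the exact coefficients $\frac{\nu}{6}$ and $\frac{3\lambda^2}{8\nu}$ — your Young's inequality with $c=\frac{2\nu}{3\lambda}$ is algebraically identical to the paper's completion of the square. Your preliminary cancellation of the gradient term is a minor cosmetic simplification, not a different method.
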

\begin{proof} The proof is split into two cases.\\ 
	{Case I:  $\lambda \leq 0$}. From the definition of the discrete energy in \eqref{df:E_h(U^n)}, one obtains
	\begin{align*}
		E_h(U^m) &= \|\delta^+ U^{m}\|^2_{2,h}  -\frac{\lambda}{2}\|U^m\|^4_{4,h} +\frac{\nu}{3}\|U^m\|^6_{6,h} \geq \|\delta^+U^{m}|^2_{2,h}+\frac{\nu}{6}\|U^m\|^6_{6,h}.
	\end{align*}
	Then
	\begin{align}
		\|\delta^+U^m\|^2_{2,h} + \frac{\nu}{6}\|U^m\|^6_{6,h}  \leq E_h(U^m) + \frac{3\lambda^2}{8\nu}\|U^m\|^2_{2,h}. \label{ineq:est_H01Un_1}
	\end{align}
	{Case II: $\lambda > 0$}. From the definition of the discrete energy in \eqref{df:E_h(U^n)}, one gets
	\begin{align*}
		E_h(U^m) &= \|\delta^+U^{m}\|^2_{2,h}  -\frac{\lambda}{2}\|U^m\|^4_{4,h} + \frac{\nu}{3}\|U^m\|^6_{6,h}\\
		& \geq \|\delta^+U^{m}\|^2_{2,h} - \frac{\lambda}{2}\|U^m\|^3_{6,h}\|U^m\|_{2,h} + \frac{\nu}{3}\|U^m\|^6_{6,h}\\
		& = \|\delta^+U^{m}\|^2_{2,h} - \frac{3\lambda^2}{8\nu}\|U^m\|^2_{2,h} + \bigg(\sqrt{\frac{\nu}{6}}\|U^m\|^3_{6,h}-\frac{\lambda}{4}\sqrt{\frac{6}{\nu}}\|U^m\|_{2,h}\bigg)^2 + \frac{\nu}{6}\|U^m\|^6_{6,h}\\
		& \geq \|\delta^+U^{m}\|^2_{2,h} - \frac{3\lambda^2}{8\nu}\|U^m\|^2_{2,h} + \frac{\nu}{6}\|U^m\|^6_{6,h}.
	\end{align*}
	Then
	\begin{align}
		\|\delta^+U^m\|^2_{2,h}+\frac{\nu}{6}\|U^m\|^6_{6,h}  \leq E(U^m) + \frac{3\lambda^2}{8\nu}\|U^m\|^2_{2,h}. \label{ineq:est_H01Un_2}
	\end{align}
	The inequalities \eqref{ineq:est_H01Un_1} and \eqref{ineq:est_H01Un_2} complete the proof.%\qed
\end{proof}
\begin{lm} 
	Let $\{U^n\}_{n=0}^N$ be a solution to the CNFD scheme \eqref{CNFD}.  Then,
	\begin{align}
		\Im\big(\big\langle U^{n+1},{U^n}|U^{n+1/2}|^2\big\rangle_h\big)\leq 
		\tau\lambda\left\langle \frac{F_1(|U^{n+1}|^2)-F_1(|U^n|^2)}{|U^{n+1}|^2-|U^n|^2},|U^{n+1/2}|^4\right\rangle_h \nonumber\\ - \tau\nu\left\langle \frac{F_2(|U^{n+1}|^2)-F_2(|U^n|^2)}{|U^{n+1}|^2-|U^n|^2},|U^{n+1/2}|^4\right\rangle_h .\label{est:Im_Un+1_Un_U_n2}
	\end{align}
\end{lm}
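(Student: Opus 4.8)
The plan is to exploit the identity $\varphi(U^{n+1},U^n)=U^{n+1/2}|U^{n+1/2}|^2$, which follows directly from \eqref{df:varphi} since $U^{n+1/2}=\frac12(U^{n+1}+U^n)$. First I would rewrite the left-hand side in terms of the increment $U^{n+1}-U^n$. Expanding $(U^{n+1}-U^n)\overline{U^{n+1/2}}$ with $U^{n+1/2}=\frac12(U^{n+1}+U^n)$, the real diagonal contributions $|U^{n+1}|^2$ and $|U^n|^2$ cancel under $\Im$, leaving the pointwise identity $\Im\big((U^{n+1}-U^n)\overline{U^{n+1/2}}\big)=\Im\big(U^{n+1}\overline{U^n}\big)$. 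Multiplying by the real weight $|U^{n+1/2}|^2$ and summing over the grid yields $\Im\langle U^{n+1},U^n|U^{n+1/2}|^2\rangle_h=\Im\langle U^{n+1}-U^n,\varphi\rangle_h$, where I abbreviate $\varphi:=\varphi(U^{n+1},U^n)$.

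Next I would pair the scheme \eqref{CNFD} with $\varphi$ in $\langle\cdot,\cdot\rangle_h$. Since $\delta_t^+U^n=(U^{n+1}-U^n)/\tau$, solving for the increment and multiplying by $-\tau i$ gives $\langle U^{n+1}-U^n,\varphi\rangle_h=\tau i\langle\delta^2U^{n+1/2},\varphi\rangle_h+\tau i\lambda\langle\psi_1,\varphi\rangle_h-\tau i\nu\langle\psi_2,\varphi\rangle_h-\tau\varepsilon\|\varphi\|_{2,h}^2$, with $\psi_1:=\psi_1(U^{n+1},U^n)$ and $\psi_2:=\psi_2(U^{n+1},U^n)$. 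Taking imaginary parts and using $\Im(iz)=\Re(z)$ together with $\Im\|\varphi\|_{2,h}^2=0$, the damping contribution disappears and I obtain $\Im\langle U^{n+1}-U^n,\varphi\rangle_h=\tau\Re\langle\delta^2U^{n+1/2},\varphi\rangle_h+\tau\lambda\Re\langle\psi_1,\varphi\rangle_h-\tau\nu\Re\langle\psi_2,\varphi\rangle_h$.

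I would then evaluate the two nonlinear terms exactly. From \eqref{df:psi1psi} and \eqref{df:varphi}, $\psi_1\overline{\varphi}=\frac{F_1(|U^{n+1}|^2)-F_1(|U^n|^2)}{|U^{n+1}|^2-|U^n|^2}\,|U^{n+1/2}|^4$ is real, and analogously for $\psi_2$ with $F_2$; hence $\Re\langle\psi_1,\varphi\rangle_h$ and $\Re\langle\psi_2,\varphi\rangle_h$ are precisely the two weighted inner products on the right-hand side of \eqref{est:Im_Un+1_Un_U_n2}. What remains, and is the main obstacle, is to show that the diffusion term is nonpositive so that it may be discarded. Using the discrete Green identities \eqref{discretegreen_x}--\eqref{discretegreen_y} I would write $\langle\delta^2U^{n+1/2},\varphi\rangle_h=-(\delta^+U^{n+1/2},\delta^+\varphi)_h$ and reduce $\Re(\delta^+U^{n+1/2},\delta^+\varphi)_h\ge0$ to the pointwise estimate that for all $a,b\in\mathbb{C}$, $\Re\big[(b-a)(\overline{b}|b|^2-\overline{a}|a|^2)\big]\ge0$, applied to consecutive grid values $a=U^{n+1/2}_{j,k}$, $b=U^{n+1/2}_{j+1,k}$ (and similarly in $y$), since $\delta_x^+\varphi$ is the forward difference of $U^{n+1/2}|U^{n+1/2}|^2$. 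This pointwise bound follows from the expansion $\Re\big[(b-a)(\overline{b}|b|^2-\overline{a}|a|^2)\big]=|a|^4+|b|^4-\Re(a\overline{b})(|a|^2+|b|^2)\ge(|a|-|b|)^2(|a|^2+|a||b|+|b|^2)\ge0$, where I used $\Re(a\overline{b})\le|a||b|$. Consequently $\Re\langle\delta^2U^{n+1/2},\varphi\rangle_h\le0$, and since $\tau>0$ discarding this nonpositive term converts the equality into the inequality \eqref{est:Im_Un+1_Un_U_n2}.
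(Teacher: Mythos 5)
Your proposal is correct and follows essentially the same route as the paper: both arguments test the scheme against $\varphi(U^{n+1},U^n)=U^{n+1/2}|U^{n+1/2}|^2$, observe that the cubic/quintic pairings are real and give exactly the two weighted inner products on the right-hand side, that the damping contribution drops out, and that the discrete Laplacian contribution has a sign and can be discarded. The only cosmetic difference is in the nonpositivity of the diffusion term: the paper splits the forward difference of $\overline{U}|U|^2$ into a symmetric product plus a difference of moduli squared, whereas you expand $\Re\big[(b-a)(\overline{b}|b|^2-\overline{a}|a|^2)\big]$ directly and use $\Re(a\overline{b})\le |a||b|$ --- both yield the same pointwise inequality.
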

\begin{proof}
	Note that
	\begin{align*}
		i\delta_t^+U^n_{j,k}\overline{U^{n+1/2}_{j,k}} &= \frac{i}{2\tau}\big(|U^{n+1}_{j,k}|^2-|U^n_{j,k}|^2\big) + \frac{i}{2\tau}\big(U^{n+1}_{j,k}\overline{U^n_{j,k}} - U^{n}_{j,k}\overline{U^{n+1}_{j,k}}\big)\\
		&=\frac{i}{2\tau}\big(|U^{n+1}_{j,k}|^2-|U^n_{j,k}|^2\big) - \frac{1}{\tau}\Im\big(U^{n+1}_{j,k}\overline{U^n_{j,k}} \big).
	\end{align*}
	Multiplying the previous equality by $|U^{n+1/2}_{j,k}|^2$ and then summing them over $j=1,\cdots,J-1$ and $k=1,\cdots,K-1$, one arrives at 
	\begin{align}
		i\big\langle\delta_t^+U^n,{U^{n+1/2}}|U^{n+1/2}|^2\big\rangle_h = \frac{i}{2\tau}\big\langle |U^{n+1}|^2-|U^n|^2,|U^{n+1/2}|^2\big\rangle_h\nonumber\\
		- \frac{1}{\tau}\Im\big(\big\langle U^{n+1},{U^n}|U^{n+1/2}|^2\big\rangle_h\big).\label{equa:delta_t_U^2}
	\end{align}
	From the inequalities (\ref{discretegreen_x}) and (\ref{discretegreen_y}), it implies
	\begin{align}
		\big\langle\delta^2_x U^{n+1/2},U^{n+1/2}|U^{n+1/2}|^2\big\rangle_h = -\big(\delta^+_x U^{n+1/2},\delta^+_x\big(U^{n+1/2}|U^{n+1/2}|^2\big)\big)_h,\label{equa:green_delta_x_U^2}\\
		\big\langle\delta^2_y U^{n+1/2},U^{n+1/2}|U^{n+1/2}|^2\big\rangle_h = -\big(\delta^+_y U^{n+1/2},\delta^+_y\big(U^{n+1/2}|U^{n+1/2}|^2\big)\big)_h.\label{equa:green_delta_y_U^2}
	\end{align}
	From the definition of the inner product $(\cdot,\cdot)_h$ and the discrete gradient $\delta_x^+$, we have
	\begin{align}
		L_1&:=\big(\delta^+_x U^{n+1/2},\delta^+_x\big(U^{n+1/2}|U^{n+1/2}|^2\big)\big)_h \nonumber\\
		&= \Delta x \Delta y\sum_{j=0,k=0}^{J-1,K-1}\frac{U^{n+1/2}_{j+1,k}-U^{n+1/2}_{j,k}}{\Delta x}\frac{\overline{U^{n+1/2}_{j+1,k}}|U^{n+1/2}_{j+1,k}|^2-\overline{U^{n+1/2}_{j,k}}|U^{n+1/2}_{j,k}|^2}{\Delta x}.\label{def:L1}
	\end{align}
	Moreover, 
	\begin{align*}
		\overline{U^{n+1/2}_{j+1,k}}|U^{n+1/2}_{j+1,k}|^2-\overline{U^{n+1/2}_{j,k}}|U^{n+1/2}_{j,k}|^2 = \big(\overline{U^{n+1/2}_{j+1,k}} - \overline{U^{n+1/2}_{j,k}}\big)\frac{|U^{n+1/2}_{j+1,k}|^2+|U^{n+1/2}_{j,k}|^2}{2}\nonumber\\ + \frac{\overline{U^{n+1/2}_{j+1,k}} + \overline{U^{n+1/2}_{j,k}}}{2}\big(|U^{n+1/2}_{j+1,k}|^2-|U^{n+1/2}_{j,k}|^2\big).
	\end{align*}
	Substituting this equation into \eqref{def:L1}, one gets
	\begin{align*}
		L_1= \Delta x \Delta y\sum_{j=0,k=0}^{J-1,K-1}\frac{U^{n+1/2}_{j+1,k}-U^{n+1/2}_{j,k}}{\Delta x}\frac{\overline{U^{n+1/2}_{j+1,k}} - \overline{U^{n+1/2}_{j,k}}}{\Delta x}\frac{|U^{n+1/2}_{j+1,k}|^2+|U^{n+1/2}_{j,k}|^2}{2}\\
		+\Delta x \Delta y\sum_{j=0,k=0}^{J-1,K-1}\frac{U^{n+1/2}_{j+1,k}-U^{n+1/2}_{j,k}}{\Delta x}\frac{\overline{U^{n+1/2}_{j+1,k}}+\overline{U^{n+1/2}_{j,k}}}{2}\frac{|U^{n+1/2}_{j+1,k}|^2-|U^{n+1/2}_{j,k}|^2}{\Delta x}.
	\end{align*}
	Taking the real part of $L_1$ in the last equation, one obtains
	\begin{align*}
		\Re(L_1)& = \frac{\Delta x \Delta y}{2}\sum_{j=0,k=0}^{J-1,K-1}\bigg|\frac{U^{n+1/2}_{j+1,k}-U^{n+1/2}_{j,k}}{\Delta x}\bigg|^2\left(|U^{n+1/2}_{j+1,k}|^2+|U^{n+1/2}_{j,k}|^2\right)\\
		&+\frac{\Delta x \Delta y}{2} \sum_{j=0,k=0}^{J-1,K-1}\bigg|\frac{|U^{n+1/2}_{j+1,k}|^2-|U^{n+1/2}_{j,k}|^2}{\Delta x}\bigg|^2.
		%		&=\frac{\Delta x \Delta y}{2}\sum_{j=0,k=0}^{J-1,K-1}\bigg|\frac{U^{n+1/2}_{j+1,k}-U^{n+1/2}_{j,k}}{\Delta x}\bigg|^2(|U^{n+1/2}_{j+1,k}|^2+|U^{n+1/2}_{j,k}|^2)\\
		%		&+	\frac{1}{2}((\delta^+_x |U^{n+1/2}|^2,\delta_x^+|U^{n+1/2}|^2)_h
	\end{align*}
	We note that $\Re(L_1)\geq 0$. From \eqref{equa:green_delta_x_U^2} and the definition of $L_1$ in \eqref{def:L1}, it implies
	\begin{align}
		\Re\big(\big\langle\delta^2_x U^{n+1/2},U^{n+1/2}|U^{n+1/2}|^2\big\rangle_h\big) = - \Re(L_1) \leq 0\label{inequa:Re_delta_x_U^2}.
	\end{align}
	By using \eqref{equa:green_delta_y_U^2} and implementing the similar calculations for respect $y$, it arrives at
	\begin{align}
		\Re\big(\big\langle\delta^2_y U^{n+1/2},U^{n+1/2}|U^{n+1/2}|^2\big\rangle_h\big) \leq 0 \label{inequa:Re_delta_y_U^2}.
	\end{align}
	Adding two inequalities \eqref{inequa:Re_delta_x_U^2} and \eqref{inequa:Re_delta_y_U^2} side by side and recalling the definition of the operator $\delta^2$, we have
	\begin{align}
		\Re\big(\big\langle\delta^2 U^{n+1/2},U^{n+1/2}|U^{n+1/2}|^2\big\rangle_h\big) \leq 0. \label{inequa:Re_delta_U^2}
	\end{align}
	Taking the inner product $\langle\cdot,\cdot\rangle_h$ of the third term, the fourth term, and the final term   in \eqref{CNFD} with $U^{n+1/2}|U^{n+1/2}|^2$ and using the definitions of $\psi_1,\psi_2$ and $\varphi$, one can obtain the following equalities  
	\begin{align}
		\lambda\langle\psi_1(U^{n+1},U^n),U^{n+1/2}|U^{n+1/2}|^2\rangle_h &= \lambda\left\langle \frac{F_1(|U^{n+1}|^2)-F_2(|U^n|^2)}{|U^{n+1}|^2-|U^n|^2},|U^{n+1/2}|^4 \right\rangle_h,\label{equa:cubictem}\\
		\nu\langle\psi_2(U^{n+1},U^n),U^{n+1/2}|U^{n+1/2}|^2\rangle_h &= \nu\left\langle \frac{F_2(|U^{n+1}|^2)-F_2(|U^n|^2)}{|U^{n+1}|^2-|U^n|^2},|U^{n+1/2}|^4 \right\rangle_h,\label{equa:qinticterm}\\
		i\varepsilon\langle\varphi(U^{n+1},U^n),U^{n+1/2}|U^{n+1/2}|^2\rangle_h &= i\varepsilon\left\langle |U^{n+1/2}|^2,|U^{n+1/2}|^4 \right\rangle_h.\label{equa:cubiclossterm}
	\end{align}
We take in (\ref{CNFD}) the inner product $\langle\cdot,\cdot\rangle_h$ with $U^{n+1/2}|U^{n+1/2}|^2$ and then take the real parts of the results. By using \eqref{equa:delta_t_U^2},\eqref{inequa:Re_delta_U^2} \eqref{equa:cubictem}, \eqref{equa:qinticterm}, and \eqref{equa:cubiclossterm}, it yields
	\begin{align*}
		- \frac{1}{\tau}\Im\big(\langle U^{n+1},{U^n}|U^{n+1/2}|^2\rangle_h\big)  + \lambda\left\langle \frac{F_1(|U^{n+1}|^2)-F_1(|U^n|^2)}{|U^{n+1}|^2-|U^n|^2},|U^{n+1/2}|^4\right\rangle_h \\ - \nu\left\langle \frac{F_2(|U^{n+1}|^2)-F_2(|U^n|^2)}{|U^{n+1}|^2-|U^n|^2}|,|U^{n+1/2}|^4\right\rangle_h  \geq 0.
	\end{align*}
	That is,
	\begin{align*}
		\Im\big(\big\langle U^{n+1},{U^n}|U^{n+1/2}|^2\big\rangle_h\big)\leq 
		\tau\lambda\left\langle \frac{F_1(|U^{n+1}|^2)-F_1(|U^n|^2)}{|U^{n+1}|^2-|U^n|^2},|U^{n+1/2}|^4\right\rangle_h \nonumber\\ - \tau\nu\left\langle \frac{F_2(|U^{n+1}|^2)-F_2(|U^n|^2)}{|U^{n+1}|^2-|U^n|^2},|U^{n+1/2}|^4\right\rangle_h.
		%\label{est:Im_Un+1_Un_U_n2}
	\end{align*}
	This estimation completes the proof of the lemma.%\qed
\end{proof}

\begin{lm}[Boundedness of discrete energy]\label{lm:EUnbounded}
	Let $\{U^n\}_{n=0}^N$ be a solution to the CNFD scheme \eqref{CNFD}. We have
	\begin{align}
		E_h(U^n) \leq E_h(U^0)+\frac{\lambda^2}{4\nu} \|U^0\|^2_{2,h},~~\forall n=0,1,\cdots,N.\label{est_EhUn_EhU0}
	\end{align}
\end{lm}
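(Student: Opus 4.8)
The plan is to derive a one–step discrete energy balance by testing the scheme against the time increment, and then to control the only non‑conservative contribution (the cubic damping) by combining the already–proved inequality \eqref{est:Im_Un+1_Un_U_n2} with the dissipation bound produced by the mass identity \eqref{eq:sum_L4Un}.

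First I would test \eqref{CNFD} against $\delta_t^+U^n=(U^{n+1}-U^n)/\tau$, i.e. form $\langle\,\cdot\,,\delta_t^+U^n\rangle_h$ of each term and take real parts. The leading term gives $i\|\delta_t^+U^n\|_{2,h}^2$, which is purely imaginary and drops out. For the Laplacian I use summation by parts \eqref{discretegreen_x}--\eqref{discretegreen_y} together with $\delta^+U^{n+1/2}=\tfrac12(\delta^+U^{n+1}+\delta^+U^n)$ and the elementary identity $\Re\big[(z+w)\overline{(z-w)}\big]=|z|^2-|w|^2$, turning it into $-\tfrac{1}{2\tau}\big(\|\delta^+U^{n+1}\|_{2,h}^2-\|\delta^+U^n\|_{2,h}^2\big)$. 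The same real–part identity applied to $\psi_1,\psi_2$ in \eqref{df:psi1psi} collapses the difference quotients into telescoping increments of $F_1,F_2$ from \eqref{df_f1f2_F1F2}; since $F_1(|U|^2)=\tfrac12|U|^4$ and $F_2(|U|^2)=\tfrac13|U|^6$, these yield $\tfrac{\lambda}{4\tau}\big(\|U^{n+1}\|_{4,h}^4-\|U^n\|_{4,h}^4\big)$ and $-\tfrac{\nu}{6\tau}\big(\|U^{n+1}\|_{6,h}^6-\|U^n\|_{6,h}^6\big)$. Multiplying the resulting identity by $-2\tau$ and reading off \eqref{df:E_h(U^n)}, the conservative pieces assemble exactly into $E_h(U^{n+1})-E_h(U^n)$, while the damping term $i\varepsilon\varphi$ leaves the single surviving contribution, giving the balance
\begin{align*}
E_h(U^{n+1})-E_h(U^n)=2\varepsilon\,\Im\big(\big\langle U^{n+1},U^n|U^{n+1/2}|^2\big\rangle_h\big).
\end{align*}
(For $\varepsilon=0$ this is the discrete energy conservation advertised in the introduction.)

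Next I would insert \eqref{est:Im_Un+1_Un_U_n2} to bound the right–hand side by $2\varepsilon\tau\big[\lambda\langle G_1,|U^{n+1/2}|^4\rangle_h-\nu\langle G_2,|U^{n+1/2}|^4\rangle_h\big]$, where $G_1=\tfrac{|U^{n+1}|^2+|U^n|^2}{2}$ and $G_2=\tfrac{|U^{n+1}|^4+|U^{n+1}|^2|U^n|^2+|U^n|^4}{3}$ are the nonnegative difference quotients of $F_1,F_2$. The crucial pointwise estimate is $\lambda G_1-\nu G_2\le \tfrac{\lambda^2}{4\nu}$: indeed $G_2\ge G_1^2$, because $a^2+ab+b^2-\tfrac34(a+b)^2=\tfrac14(a-b)^2\ge0$ for $a,b\ge0$, and hence $\lambda G_1-\nu G_2\le\lambda G_1-\nu G_1^2\le\max_{t\in\R}(\lambda t-\nu t^2)=\tfrac{\lambda^2}{4\nu}$, valid for every real $\lambda$. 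Weighting by $|U^{n+1/2}|^4\ge0$ and summing over the grid gives $E_h(U^{n+1})-E_h(U^n)\le \tfrac{\varepsilon\tau\lambda^2}{2\nu}\,\|U^{n+1/2}\|_{4,h}^4$.

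Finally I would sum this one–step estimate over $n=0,\dots,m-1$ and invoke the mass identity \eqref{eq:sum_L4Un} of Lemma \ref{lm:discreteConservativeMass}, namely $\varepsilon\sum_{n=0}^{m-1}\|U^{n+1/2}\|_{4,h}^4=\tfrac{1}{2\tau}\big(\|U^0\|_{2,h}^2-\|U^m\|_{2,h}^2\big)\le \tfrac{1}{2\tau}\|U^0\|_{2,h}^2$. Substituting this telescoped dissipation collapses the accumulated constant to exactly $\tfrac{\lambda^2}{4\nu}\|U^0\|_{2,h}^2$, which is \eqref{est_EhUn_EhU0}. I expect the main obstacle to be the first step: bookkeeping the real parts so that the three conservative terms reassemble into $E_h$ with the correct factors (the $\tfrac14,\tfrac16$ appearing against the $\tfrac12,\tfrac13$ in the energy, reconciled by the factor $-2\tau$) and, at the same time, so that the damping term reduces to precisely the quantity $\Im\langle U^{n+1},U^n|U^{n+1/2}|^2\rangle_h$ for which the preceding lemma was tailored; the later Young–type estimate $\lambda G_1-\nu G_2\le\lambda^2/4\nu$ via $G_2\ge G_1^2$ is the other point that must be spotted, but is short once seen.
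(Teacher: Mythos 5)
Your proposal is correct and follows essentially the same route as the paper: the same one-step energy balance $E_h(U^{n+1})-E_h(U^n)=2\varepsilon\Im\langle U^{n+1},U^n|U^{n+1/2}|^2\rangle_h$ obtained by testing against the time increment, the same appeal to \eqref{est:Im_Un+1_Un_U_n2}, and the same closing summation via \eqref{eq:sum_L4Un}. The only (mild) difference is that you bound the damping contribution by the pointwise estimate $\lambda G_1-\nu G_2\le\lambda G_1-\nu G_1^2\le\lambda^2/(4\nu)$, which handles all signs of $\lambda$ at once, whereas the paper splits into the cases $\lambda\le0$ and $\lambda>0$ and, in the latter, uses H\"older's inequality and completes the square at the level of norms --- both yield the identical one-step bound $\frac{\lambda^2\varepsilon\tau}{2\nu}\|U^{n+1/2}\|_{4,h}^4$.
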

\begin{proof}
	From the definition of the discrete difference operator $\delta_t^+$, there holds
	\begin{align}\label{realpartdt}
		i\big\langle\delta_t^+U^n,U^{n+1}-U^n\big\rangle_h = i\frac{\|U^{n+1}-U^n\|^2_{2,h}}{\tau}.
	\end{align}
	Using (\ref{discretegreen_x}),  (\ref{discretegreen_y}) and the  definition of the discrete norm $\|\delta^+\cdot\|_{2,h}$, one obtains
	\begin{align}
		&\big\langle\delta^2_x U^{n+1/2},U^{n+1}-U^n\big\rangle_h + \big\langle\delta^2_y U^{n+1/2},U^{n+1}-U^n\big\rangle_h\nonumber\\
		&= -(\delta^+_x U^{n+1/2},\delta^+_x(U^{n+1}-U^n))_h -(\delta^+_y U^{n+1/2},\delta^+_y (U^{n+1}-U^n))_h\nonumber\\
		&= -\frac{1}{2}\|\delta^+ U^{n+1}\|^2_{2,h} + \frac{1}{2}\|\delta^+U^n\|^2_{2,h} + \frac{1}{2}(\delta^+_x U^{n+1},\delta^+_xU^n)_h - \frac{1}{2}\big(\delta^+_x U^{n},\delta^+_xU^{n+1}\big)_h \nonumber\\ 
		&+ \frac{1}{2}\big(\delta^+_y U^{n+1},\delta^+_yU^n\big)_h - \frac{1}{2}\big(\delta^+_y U^{n},\delta^+_y U^{n+1}\big)_h.\label{realpartdiffusion}
	\end{align}
	We note that 
	\begin{align*}
		\Re\left(\frac{1}{2}(\delta^+_x U^{n+1},\delta^+_xU^n)_h - \frac{1}{2}\big(\delta^+_x U^{n},\delta^+_xU^{n+1}\big)_h \right) = 0,\\
		\Re\left(\frac{1}{2}\big(\delta^+_y U^{n+1},\delta^+_yU^n\big)_h - \frac{1}{2}\big(\delta^+_y U^{n},\delta^+_y U^{n+1}\big)_h\right) = 0.
	\end{align*}
	By taking the real part of \eqref{realpartdiffusion}, we have
	\begin{align}
		\!\!\!\!\Re\big(\big\langle\delta^2_x U^{n+1/2},U^{n+1}-U^n\big\rangle_h+ \big\langle\delta^2_y U^{n+1/2},U^{n+1}-U^n\big\rangle_h\big) = -\frac{1}{2}\|\delta^+ U^{n+1}\|^2_{2,h} + \frac{1}{2}\|\delta^+U^n\|^2_{2,h}. \label{realpartdiffusion1}
	\end{align}
	From the third term in \eqref{CNFD} and the definitions of $\varphi_1$ and $F_1$, we have
	\begin{align}
		\lambda\langle\psi_1(U^{n+1},U^n),U^{n+1}-U^n\rangle_h &= \lambda \left\langle \frac{F_1(|U^{n+1}|^2)-F_1(|U^n|^2)}{|U^{n+1}|^2-|U^n|^2}U^{n+1/2},U^{n+1}-U^n \right\rangle_h\nonumber\\
		&= \frac{\lambda}{4} \left\langle (|U^{n+1}|^2+|U^n|^2)(U^{n+1}+U^n),U^{n+1}-U^n \right\rangle_h\nonumber\\
		&= \frac{\lambda}{4}\left<|U^{n+1}|^2+|U^n|^2,|U^{n+1}|^2-|U^n|^2\right>_h\nonumber \\
		&+ \frac{\lambda}{4}\left<|U^{n+1}|^2+|U^n|^2,\overline{U^n}U^{n+1}-\overline{U^{n+1}}U^{n}\right>_h.\label{realpart_cubictem}
	\end{align}
	 From \eqref{realpart_cubictem} and noting that \begin{align*}
		\Re\left(\left<|U^{n+1}|^2+|U^n|^2,\overline{U^n}U^{n+1}-\overline{U^{n+1}}U^{n}\right>_h\right) = 0,
	\end{align*} there holds
	\begin{align}
		\Re(\lambda\langle\psi_1(U^{n+1},U^n),U^{n+1}-U^n\rangle_h) = \frac{\lambda}{4}\|U^{n+1}\|^4_{4,h} - \frac{\lambda}{4}\|U^n\|^4_{4,h}.\label{realpart_cubictem1}
	\end{align}
	By performing the similar calculations for the fourth term in \eqref{CNFD}, from the definitions of $\varphi_2$ and $F_2$, one has
	\begin{align}
		\nu\langle\psi_2(U^{n+1},U^n),U^{n+1}-U^n\rangle_h &= \nu \left\langle \frac{F_2(|U^{n+1}|^2)-F_2(|U^n|^2)}{|U^{n+1}|^2-|U^n|^2}U^{n+1/2},U^{n+1}-U^n \right\rangle_h\nonumber\\
		&= \frac{\nu}{6} \left\langle (|U^{n+1}|^4+|U^{n+1}|^2|U^n|^2+|U^n|^4)(U^{n+1}+U^n),U^{n+1}-U^n \right\rangle_h\nonumber\\
		&= \frac{\nu}{6}\left<|U^{n+1}|^4 + |U^{n+1}|^2|U^n|^2+|U^n|^4,|U^{n+1}|^2-|U^n|^2\right>_h \nonumber\\ 
		&+\frac{\nu}{6}\left<|U^{n+1}|^4 + |U^{n+1}|^2|U^n|^2+|U^n|^4,\overline{U^{n}}U^{n+1}- \overline{U^{n+1}}U^{n} \right>_h.\label{realpart_quintictem}
	\end{align}
	Also, from \eqref{realpart_quintictem} and noting that
	\begin{align*}
		\Re\left(\left<|U^{n+1}|^4 + |U^{n+1}|^2|U^n|^2+|U^n|^4,\overline{U^{n}}U^{n+1}- \overline{U^{n+1}}U^{n} \right>_h\right) = 0,
	\end{align*}
	 there holds
	\begin{align}
		\Re(\nu\langle\psi_2(U^{n+1},U^n),U^{n+1}-U^n\rangle_h) = \frac{\nu}{6}\|U^{n+1}\|^6_{6,h} - \frac{\nu}{6}\|U^n\|^6_{6,h}.\label{realpart_quintictem1}
	\end{align}
	Additionally, for the cubic loss term, one can obtain
	\begin{align}
		i\varepsilon\big\langle \varphi(U^{n+1},U^n),U^{n+1}-U^n \big\rangle_h =i\varepsilon\big\langle U^{n+1/2}|U^{n+1/2}|^2,U^{n+1}-U^n \big\rangle_h\nonumber\\
		= \frac{i\varepsilon}{2}\big\langle |U^{n+1/2}|^2,|U^{n+1}|^2-|U^n|^2\big\rangle_h
		+\varepsilon\Im\big(\big\langle U^{n+1},U^n|U^{n+1/2}|^2\big\rangle_h\big).\label{loss_cubic}
	\end{align}	
We take in (\ref{CNFD}) the inner product $\langle\cdot,\cdot\rangle_h$ with $U^{n+1}-U^n$ and then take the real parts of the results. By using (\ref{realpartdt}), (\ref{realpartdiffusion1}), \eqref{realpart_cubictem1}, \eqref{realpart_quintictem1}, and \eqref{loss_cubic}, it yields
	\begin{align*}
		-\frac{1}{2}\|\delta^+ U^{n+1}\|^2_{2,h} + \frac{1}{2}\|\delta^+ U^{n}\|^2_{2,h}  +\frac{\lambda}{4}\|U^{n+1}\|^4_{4,h} -\frac{\lambda}{4}\|U^{n}\|^4_{4,h} +\frac{\nu}{6}\|U^{n+1}\|^6_{6,h}- \frac{\nu}{6}\|U^{n}\|^6_{6,h} \\
		+ \varepsilon\Im\big(\big\langle U^{n+1},U^n|U^{n+1/2}|^2\big\rangle_h\big)=0.
	\end{align*}
	That is,
	\begin{align*}
		\big(\|\delta^+ U^{n+1}\|^2_{2,h}  -\frac{\lambda}{2}\|U^{n+1}\|^4_{4,h}+\frac{\nu}{3}\|U^{n+1}\|^6_{6,h}\big) 
		- \big(\|\delta^+ U^{n}\|^2_{2,h}  -\frac{\lambda}{2}\|U^{n}\|^4_{4,h}+\frac{\nu}{3}\|U^{n}\|^6_{6,h}\big) \\
		=2\varepsilon\Im\big(\big\langle U^{n+1},U^n|U^{n+1/2}|^2\big\rangle_h\big).
	\end{align*}
	From  the definition of $E_h(U^n)$ in \eqref{df:E_h(U^n)}, the previous equation can be written as
	\begin{align}
		E_h(U^{n+1})-E_h(U^n) 
		=2\varepsilon\Im\big(\big\langle U^{n+1},U^n|U^{n+1/2}|^2\big\rangle_h\big).\label{equa:E_h(U^{n+1})-E_h(U^{n})}
	\end{align}
	Applying the estimation for the term $\Im\big(\big\langle U^{n+1},U^n|U^{n+1/2}|^2\big\rangle_h\big)$ in \eqref{est:Im_Un+1_Un_U_n2} with $\varepsilon\geq 0$, the last equation becomes 
	\begin{align}
		E_h(U^{n+1})-E_h(U^{n}) \leq 
		2\tau\lambda\left\langle\frac{F_1(|U^{n+1}|^2)-        F_1(|U^n|^2)}{|U^{n+1}|^2-|U^n|^2}|U^{n+1/2}|^2,|U^{n+1/2}|^2\right\rangle_h \nonumber \\- 2\tau\nu\left\langle \frac{F_2(|U^{n+1}|^2)-F_2(|U^n|^2)}{|U^{n+1}|^2-|U^n|^2}|U^{n+1/2}|^2,|U^{n+1/2}|^2\right\rangle_h .\label{est:E_h(U^{n+1})-E_h(U^{n})}
	\end{align}
	Now we consider the two cases for $\lambda$. \\
	$\bullet$ If $\lambda \leq 0$. From the definitions of $F_1$ and $F_2$, it yields
	\begin{align}        
		\left\langle\frac{F_1(|U^{n+1}|^2)-F_1(|U^n|^2)}       {|U^{n+1}|^2-                                         U^n|^2},|U^{n+1/2}|^4\right\rangle_h  =  \frac{1}           {2}\left< (|U^{n+1}|^2+|U^n|^2)|U^{n+1/2}|^2,|U^{n+1/2}|^2\right>_h\geq 0, \label{equa:explain_F1}\\
		\!\!\!\!\!\!\left\langle\frac{F_2(|U^{n+1}|^2)-F_2(|U^n|^2)}{|U^{n+1}|^2-|U^n|^2} ,|U^{n+1/2}|^4\right\rangle_h =\frac{1}{3}\left<|U^{n+1}|^4+|U^{n+1}|^2|U^n|^2+|U^n|^4,|U^{n+1/2}|^4\right> \geq 0.\label{equa:explain_F2}
	\end{align}
	Combining the inequalities \eqref{est:E_h(U^{n+1})-E_h(U^{n})}, \eqref{equa:explain_F1},  \eqref{equa:explain_F2}, $\nu>0$, and $\lambda \leq 0$,  one obtains 
	\begin{align}
		E_h(U^{n+1})- E_h(U^{n})\leq 0. \label{est:E_h(U^n)_1}
	\end{align}
	$\bullet$ If $\lambda > 0$. 
	Applying H\"older's inequality for the right hand side of \eqref{equa:explain_F1}, there holds
	\begin{align}
		\left\langle\frac{F_1(|U^{n+1}|^2)-F_1(|U^n|^2)}{|U^{n+1}|^2-|U^n|^2},|U^{n+1/2}|^4\right\rangle_h \leq \frac{1}{2}\|\big(|U^{n+1}|^2+|U^n|^2\big)|U^{n+1/2}|^2\|_{2,h}\|U^{n+1/2}\|^2_{4,h}.\label{est_F1}
	\end{align}
	Plugging the inequality $\displaystyle\frac{1}{3}(|U^{n+1}|^4+|U^{n+1}|^2|U^n|^2+|U^n|^4) \geq \frac{1}{4}(|U^{n+1}|^2+|U^n|^2)^2$ into the equality \eqref{equa:explain_F2}, we get
	\begin{align}
		\left\langle\frac{F_2(|U^{n+1}|^2)-F_2(|U^n|^2)}{|U^{n+1}|^2-|U^n|^2} ,|U^{n+1/2}|^4\right\rangle_h &\geq \frac{1}{4}\|\big(|U^{n+1}|^2+|U^n|^2\big)|U^{n+1/2}|^2\|^2_{2,h}.\label{est_F2}
	\end{align}
	From the inequalities \eqref{est_F1}, \eqref{est_F2}, and \eqref{est:E_h(U^{n+1})-E_h(U^{n})}, one can arrive at
	\begin{align}
		E_h(U^{n+1}) -E_h(U^n)& \leq \tau\varepsilon\lambda\|(U^{n+1}|^2+|U^n|^2)|U^{n+1/2}|^2\|_{2,h}\|U^{n+1/2}\|^2_{4,h} \nonumber\\ 
		& -\frac{\tau\varepsilon\nu}{2}\|(|U^{n+1}|^2+|U^n|^2)|U^{n+1/2}|^2\|^2_{2,h} \nonumber\\
		&\leq -\varepsilon\tau\bigg(\sqrt{\frac{\nu}{2}}\|(|U^{n+1}|^2+|U^n|^2)|U^{n+1/2}|^2\|_{2,h} -\frac{\lambda}{\sqrt{2\nu}}\|U^{n+1/2}\|^2_{4,h}\bigg) ^2 \nonumber\\
		&+ \frac{\lambda^2\varepsilon\tau}{2\nu} \|U^{n+1/2}\|^4_{4,h} \leq \frac{\lambda^2\varepsilon\tau}{2\nu} \|U^{n+1/2}\|^4_{4,h}.\label{est:E_h(U^n)_11}
	\end{align} 
	Finally, from the observations for two cases of $\lambda$ and from the inequalities \eqref{est:E_h(U^n)_1} and \eqref{est:E_h(U^n)_11}, it yields
	\begin{align}
		E_h(U^{n+1}) -E_h(U^n) \leq \frac{\lambda^2\varepsilon\tau}{2\nu} \|U^{n+1/2}\|^4_{4,h},~~\forall n=0,1,\cdots,N-1.\label{est:E_h(U^n)_2}
	\end{align}
	Summing the inequalities \eqref{est:E_h(U^n)_2} over $n=0,1,\cdots,m-1$, with $m\in\overline{1,N}$, there holds
	\begin{align}
		E_h(U^{m})-E_h(U^0)\leq \frac{\lambda^2\varepsilon\tau}{2\nu}\sum_{n=0}^{m-1}\|U^{n+1/2}\|^4_{4,h}.\label{est:E_h(U^n)_3}
	\end{align}
	On the other hand, from the equation \eqref{eq:sum_L4Un}, one obtains
	\begin{align}
		\tau\varepsilon \sum_{n=0}^{m-1}\|U^{n+1/2}\|^4_{4,h}\leq \frac{1}2\|U^0\|^2_{2,h}. \label{est:sum_L4Un}
	\end{align}
	Combining the two estimations  \eqref{est:E_h(U^n)_3} and \eqref{est:sum_L4Un}, it yields
	\begin{align*}
		E_h(U^{m})-E_h(U^0) \leq \frac{\lambda^2}{4\nu}\|U^0\|^2_{2,h},~\forall m=1,\cdots,N.
	\end{align*}
	%	From the inequality \eqref{ineq:poincare1} with $U^{n+1/2}\in X_{JK}$, there holds
	%	\begin{align*}
		%	\|U^{n+1/2}\|^4_{4,h} \leq \|U^{n+1/2}\|^2_{2,h}\|\delta^+U^{n+1/2}\|^2_{2,h}\leq \frac{1}{4}\big(\|U^{n}\|^2_{2,h}+\|U^{n+1}\|^2_{2,h}\big)\big(\|\delta^+ U^{n}\|^2_{2,h}+\|\delta^+ U^{n+1}\|^2_{2,h}\big).
		%	\end{align*}
	%	Applying Lemma \ref{lm:discreteConservativeMass}, and Lemma \ref{lm:bound_gradient_byEnergy}, we get
	%	\begin{align}
		%	\|U^{n+1/2}\|^4_{4,h} \leq \frac{1}{2}\|U^0\|^2_{2,h}\big(E_h(U^{n+1})+E_h(U^{n})+\frac{3\lambda^2}{4\nu}\|U^0\|^2_{2,h}\big).\label{est:L4_U}
		%	\end{align}
	%	By using inequalities \eqref{est:E_h(U^n)_2} and \eqref{est:L4_U}, it yields
	%	\begin{align*}
		%	E_h(U^{n+1}) -E_h(U^n) &\leq\frac{\lambda^2\varepsilon\tau}{4\nu}\|U^0\|^2_{2,h} \big(E_h(U^{n+1})+E_h(U^n)+\frac{3\lambda^2}{4\nu}\|U^0\|^2_{2,h}\big)\\
		%	&=\frac{3\lambda^4\varepsilon\tau}{16\nu^2}\|U^0\|^4_{2,h} +  \frac{\lambda^2\varepsilon\tau}{4\nu}\|U^0\|^2_{2,h} \left(E_h(U^{n+1})+E_h(U^n)\right).
		%	\end{align*} 
	%	Applying discrete Gr\"onwall's inequality, we get
	%	\begin{align*}
		%	E_h(U^{n+1}) \lesssim \|U^0\|^4_{2,h} + E_h(U^0).
		%	\end{align*}
	This proves the boundedness of the discrete energy of \eqref{est_EhUn_EhU0}.
	%\qed
\end{proof}

\begin{itemize}
    \item {\bf Remark 3.1} Note that when $\varepsilon=0$, from Eq. \eqref{est:Im_Un+1_Un_U_n2}, one can obtain  the discrete conservation of mass. Additionally, from Eq.  \eqref{equa:E_h(U^{n+1})-E_h(U^{n})}, one also gets $E_h(U^{n+1})=E_h(U^n)$ for all $n=0,\cdots,N-1$. It means that discrete energies $E_h(U^n)$ are conserved.
    
\end{itemize}    

\subsection{Existence and uniqueness solution}
The following Brouwer fixed point theorem \cite{Akrivis1991} is used to prove the existence of a solution of the CNFD scheme  in \eqref{CNFD}.
\begin{lm}\label{lm:existencesolution}
	Let $(H(\cdot,\cdot))$ be a finite dimensional inner product space, $\|\cdot\|$ be the associated norm, and $g: H\rightarrow H$ be continuous. Assume moreover that
	$$
	\exists \alpha > 0 ~\forall z \in H ~ \|z\| = \alpha,~ \Re(g(z),z) \geq 0.
	$$
	Then, there exists a $z^*\in H$ such that $g(z^*) = 0$ and $\|z^*\|\leq \alpha$.
\end{lm}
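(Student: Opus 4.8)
The plan is to deduce the statement from Brouwer's fixed point theorem by a standard contradiction argument. Since $H$ is finite dimensional, its closed ball $\bar B_\alpha = \{z \in H : \|z\| \leq \alpha\}$ is a compact, convex subset of a finite-dimensional real vector space (viewing a complex $H$ through its underlying real structure, so that $\bar B_\alpha$ is homeomorphic to a Euclidean ball), which is precisely the setting in which Brouwer's theorem applies.

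First I would argue by contradiction: suppose $g(z) \neq 0$ for every $z \in \bar B_\alpha$. Then the map
$$
P(z) = -\alpha \frac{g(z)}{\|g(z)\|}
$$
is well defined and continuous on $\bar B_\alpha$, and since $\|P(z)\| = \alpha$ for all $z$, it maps $\bar B_\alpha$ into itself. Brouwer's fixed point theorem then yields a point $z_0 \in \bar B_\alpha$ with $P(z_0) = z_0$; in particular $\|z_0\| = \|P(z_0)\| = \alpha$, so $z_0$ lies on the sphere where the coercivity hypothesis is assumed.

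Next I would exploit the fixed-point identity $z_0 = -\alpha\, g(z_0)/\|g(z_0)\|$, which rearranges to $g(z_0) = -\bigl(\|g(z_0)\|/\alpha\bigr)\, z_0$. Pairing with $z_0$ and taking the real part, and noting that the scalar $\|g(z_0)\|/\alpha$ is real and positive, gives
$$
\Re(g(z_0), z_0) = -\frac{\|g(z_0)\|}{\alpha}\,\|z_0\|^2 = -\alpha\,\|g(z_0)\| < 0,
$$
using $\|z_0\| = \alpha$ and $g(z_0) \neq 0$. This contradicts the hypothesis $\Re(g(z), z) \geq 0$ on $\|z\| = \alpha$. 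Hence the assumption is false, and there exists $z^* \in \bar B_\alpha$, i.e. with $\|z^*\| \leq \alpha$, such that $g(z^*) = 0$.

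The only point requiring care---which I would flag as the main subtlety rather than a genuine obstacle---is the passage from a possibly complex inner product space to the real setting needed for Brouwer: one must check that $P$ is continuous as a map of the underlying real Euclidean space, and that the real part appearing in the hypothesis is exactly the real bilinear pairing compatible with that real structure, so the final sign computation is legitimate. Once this identification is fixed, the remainder of the argument is purely mechanical.
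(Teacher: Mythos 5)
Your proof is correct and is the standard argument: the paper itself does not prove this lemma but cites it from Akrivis--Dougalis--Karakashian (1991), where the proof is exactly the contradiction you give --- assume $g$ never vanishes on the closed ball, apply Brouwer to $P(z)=-\alpha g(z)/\|g(z)\|$, and derive $\Re(g(z_0),z_0)=-\alpha\|g(z_0)\|<0$ at the fixed point, contradicting the sign hypothesis on the sphere. Your flagged subtlety about passing to the underlying real structure is handled correctly, since $(z_0,z_0)=\|z_0\|^2$ is real regardless of whether the inner product is sesquilinear.
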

\begin{thm}[Existence and uniqueness solution]\label{thm:exist_unique_sol_CNFD}
	Assume $U^n \in  X_{JK}$ is a solution to the CNFD scheme \eqref{CNFD} and $\tau$ is small enough. There exists a unique solution $U^{n+1}$ to the CNFD discretization (\ref{CNFD}).
\end{thm}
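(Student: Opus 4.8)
The plan is to prove the two assertions by different tools: existence through the Brouwer fixed point theorem (Lemma~\ref{lm:existencesolution}), and uniqueness through a difference estimate that is where the smallness of $\tau$ enters. For existence I would freeze $U^n$ and take the unknown to be the half-step value $w=U^{n+1/2}$, so that $U^{n+1}=2w-U^n$. Multiplying the scheme \eqref{CNFD} by $\tfrac{\tau}{2i}$ and rearranging motivates the continuous map $g\colon X_{JK}\to X_{JK}$,
\begin{align*}
g(w)=w-U^n-\frac{i\tau}{2}\big[\delta^2 w+\lambda\psi_1(2w-U^n,U^n)-\nu\psi_2(2w-U^n,U^n)\big]+\frac{\tau\varepsilon}{2}\varphi(2w-U^n,U^n),
\end{align*}
whose zeros are precisely the half-step values solving \eqref{CNFD}.

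The heart of the existence argument is the sign of $\Re\langle g(w),w\rangle_h$. By the identity \eqref{impartdiffusion} the diffusion term $-\tfrac{i\tau}{2}\langle\delta^2 w,w\rangle_h$ is purely imaginary; using the explicit forms in \eqref{df:psi1psi} one checks that $\langle\psi_1(2w-U^n,U^n),w\rangle_h$ and $\langle\psi_2(2w-U^n,U^n),w\rangle_h$ are real (each equals a real grid function paired with $|w|^2$), so after multiplication by $i$ they too leave the real part; and by \eqref{df:varphi} the damping term contributes $\tfrac{\tau\varepsilon}{2}\|w\|^4_{4,h}\geq0$. Hence
\begin{align*}
\Re\langle g(w),w\rangle_h=\|w\|^2_{2,h}-\Re\langle U^n,w\rangle_h+\frac{\tau\varepsilon}{2}\|w\|^4_{4,h}\geq \|w\|_{2,h}\big(\|w\|_{2,h}-\|U^n\|_{2,h}\big).
\end{align*}
Choosing the radius $\alpha=\|U^n\|_{2,h}+1$ makes this nonnegative on the sphere $\|w\|_{2,h}=\alpha$, so Lemma~\ref{lm:existencesolution} delivers a zero $w^\ast$, and $U^{n+1}=2w^\ast-U^n\in X_{JK}$ solves \eqref{CNFD}. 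Note that this step needs no smallness of $\tau$.

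For uniqueness, suppose $U^{n+1}$ and $\tilde U^{n+1}$ both solve \eqref{CNFD} for the same $U^n$, and set $\eta=\tfrac{1}{2}(U^{n+1}-\tilde U^{n+1})=U^{n+1/2}-\tilde U^{n+1/2}$. Subtracting the two schemes, pairing with $\eta$ in $\langle\cdot,\cdot\rangle_h$, and taking the imaginary part, the time-difference term produces $\tfrac{2}{\tau}\|\eta\|^2_{2,h}$ while the diffusion term $\langle\delta^2\eta,\eta\rangle_h=-\|\delta^+\eta\|^2_{2,h}$ is real and drops out. The surviving contributions are the differences of $\psi_1,\psi_2,\varphi$ evaluated at $U^{n+1}$ and at $\tilde U^{n+1}$, paired with $\eta$. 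Since $\psi_1,\psi_2,\varphi$ in \eqref{df:psi1psi}--\eqref{df:varphi} are polynomials in their arguments, they are Lipschitz on bounded sets, so by Cauchy--Schwarz each such term is bounded by $C\|\eta\|^2_{2,h}$, the constant $C$ depending only on the discrete maximum norms $\|U^{n+1}\|_{\infty,h}$, $\|\tilde U^{n+1}\|_{\infty,h}$, $\|U^n\|_{\infty,h}$. These are controlled uniformly over all scheme solutions by the a priori mass bound (Lemma~\ref{lm:discreteConservativeMass}) — on a fixed grid through a discrete inverse inequality, or more sharply via the energy bound (Lemma~\ref{lm:EUnbounded}) combined with Lemma~\ref{lm:bound_gradient_byEnergy}. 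We thus arrive at
\begin{align*}
\frac{2}{\tau}\|\eta\|^2_{2,h}\leq C\|\eta\|^2_{2,h},
\end{align*}
and choosing $\tau<2/C$ forces $\eta=0$, i.e. $U^{n+1}=\tilde U^{n+1}$.

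I expect the main obstacle to lie in the uniqueness step, specifically in producing the Lipschitz constant $C$ for the cubic, quintic and damping differences with a bound that is uniform across the two compared solutions; this is exactly what dictates the smallness condition on $\tau$, and it rests on first pinning down a uniform bound for every scheme solution through the boundedness lemmas. The existence half, by contrast, reduces to the coercivity estimate above plus Brouwer, and is essentially routine once the map $g$ is chosen so that the skew-symmetric diffusion and the real-valued cubic/quintic pairings are eliminated from $\Re\langle g(w),w\rangle_h$.
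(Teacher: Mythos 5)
Your existence argument coincides with the paper's: the same half-step reformulation, the same map (the paper calls it $\Pi$), the same observation that the diffusion and cubic/quintic pairings drop out of the real part while the damping contributes a nonnegative term, the same radius $\alpha=\|U^n\|_{2,h}+1$, and the same appeal to Lemma~\ref{lm:existencesolution}. That half is fine.

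The uniqueness half has a genuine gap. Your constant $C$ is built from $\|U^{n+1}\|_{\infty,h}$, $\|\tilde U^{n+1}\|_{\infty,h}$, $\|U^n\|_{\infty,h}$, and you assert these are controlled uniformly by the mass bound via an inverse inequality, ``or more sharply'' by the energy bound together with Lemma~\ref{lm:bound_gradient_byEnergy}. Neither route works as stated: in two space dimensions the discrete $H^1$ bound supplied by Lemmas~\ref{lm:bound_gradient_byEnergy} and~\ref{lm:EUnbounded} does \emph{not} control the discrete $L^\infty$ norm uniformly in $h$ (the embedding $H^1\hookrightarrow L^\infty$ fails in 2D), and the inverse inequality $\|\omega\|_{\infty,h}\leq h^{-1}\|\omega\|_{2,h}$ makes $C$ blow up like a negative power of $h$, so your threshold $\tau<2/C$ degenerates as $h\to 0$. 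That is not enough for the role this theorem plays later: in the proof of Theorem~\ref{thm:est_error_L2H01} uniqueness is invoked under the mild condition $\tau\lesssim h$, which an $h$-degenerate threshold would not cover. The paper closes the argument differently and this is the essential idea you are missing: after pairing the difference equation with $\chi=v-w$ it takes \emph{both} the imaginary part (the Laplacian drops out, yielding $\|\chi\|^2_{2,h}\leq c_1\tau\|\chi\|^2_{4,h}$) and the real part (the Laplacian survives, yielding $\|\delta^+\chi\|^2_{2,h}\leq c_2\|\chi\|^2_{4,h}$ with no factor of $\tau$); here the nonlinear differences are estimated by H\"older against $\|\chi\|^2_{4,h}$ with coefficients $\|\cdot\|^2_{4,h}$ and $\|\cdot\|^4_{8,h}$ of the solutions, and these \emph{are} uniformly bounded through the discrete Gagliardo--Nirenberg inequalities of Lemma~\ref{lm:est_normu2p} combined with the mass and energy bounds. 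Multiplying the two resulting inequalities and invoking \eqref{ineq:poincare1} gives $\|\chi\|^4_{4,h}\leq c_1c_2\tau\|\chi\|^4_{4,h}$, which forces $\chi=0$ for $\tau$ small independently of $h$. Without this two-sided real/imaginary device (or some substitute for the missing uniform $L^\infty$ bound), your uniqueness step does not go through in the form required.
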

\begin{proof}
	From (\ref{CNFD}), one obtains for all $(j,k)\in\T_{JK}$
	\begin{align*}
		U^{n+1/2}_{j,k} = U^n_{j,k} +\frac{i\tau}{2}\delta^2 U^{n+1/2}_{j,k} + \frac{i\tau\lambda}{2}\frac{|2U^{n+1/2}_{j,k}-U^n_{j,k}|^2+|U^n_{j,k}|^2}{2}U^{n+1/2}_{j,k}\\
		-\frac{i\tau\nu}{2}\frac{|2U^{n+1/2}_{j,k}-U^n_{j,k}|^4+|2U^{n+1/2}_{j,k}-U^n_{j,k}|^2|U^n_{j,k}|^2+|U^n_{j,k}|^4}{3}U^{n+1/2}_{j,k}-\frac{\tau\varepsilon}{2} |U^{n+1/2}_{j,k}|^2 U^{n+1/2}_{j,k}.
	\end{align*}
	We define the mapping $\Pi: X_{JK}\rightarrow X_{JK}$ as follows
	\begin{align}
		(\Pi(v))_{j,k} = v_{j,k}-U^n_{j,k}  -  \frac{i\tau}{2}\delta^2 v_{j,k} - \frac{i\tau\lambda}{2}\frac{|2v_{j,k}-U^n_{j,k}|^2+|U^n_{j,k}|^2}{2}v_{j,k}\nonumber\\+\frac{i\tau\nu}{2}\frac{|2v-U^n_{j,k}|^4+|2v_{j,k}-U^n_{j,k}|^2|U^n_{j,k}|^2+|U^n_{j,k}|^4}{2}v_{j,k} + \frac{\tau\varepsilon}{2} |v_{j,k}|^2  v_{j,k}.\label{def:Pi}
	\end{align}
	We note that the function $\Pi$ is continuous. Taking the mapping $\Pi(v)$  the inner product $\langle\cdot\rangle_h$ with $v$, there holds
	$$
	\Re\langle\Pi(v),v\rangle_h \geq \|v\|^2_{2,h}-\Re(\langle U^n,v\rangle_h).
	$$ 
    That is,
	\begin{align*}
		\Re\langle\Pi(v),v\rangle_h \geq \|v\|_{2,h}(\|v\|_{2,h}-\|U^n\|_{2,h}).
	\end{align*}
	By choosing  $\|v\|_{2,h} = \|U^n\|_{2,h}+1$, one then gets $\Re\langle\Pi(v),v\rangle_h > 0$. Hence from Lemma \ref{lm:existencesolution}, there exists a solution $U^{n+1/2}\in X_{JK}$.\\
	Now we prove the unique solution in the scheme \eqref{CNFD}. Let $v,w\in X_{JK}$ be solutions in the scheme \eqref{CNFD} such that $\Pi(v) = \Pi(w) = 0$, where $\Pi$ is defined in  \eqref{def:Pi}. Setting $\chi = v-w$, one obviously has
	\begin{align}\label{unique_CNFD}
		\chi_{j,k} =\frac{i\tau}{2}\delta^2 \chi_{j,k} + \frac{i\lambda\tau}{2}\phi_1(v_{j,k},w_{j,k})-\frac{i\nu\tau}{2}\phi_2(v_{j,k},w_{j,k}) +\frac{\tau\varepsilon}{2}\phi_3(v_{j,k},w_{j,k}),
	\end{align}
	where 
	\begin{align*}
		\phi_1(v_{j,k},w_{j,k}) &= \frac{|2v_{j,k}-U^n_{j,k}|^2+|U^n_{j,k}|^2}{2}v_{j,k}-\frac{|2w_{j,k}-U^n_{j,k}|^2+|U^n_{j,k}|^2}{2}w_{j,k},\\
		\phi_2(v_{j,k},w_{j,k})&=\frac{|2v_{j,k}-U^n_{j,k}|^4+|2v_{j,k}-U^n_{j,k}|^2|U^n_{j,k}|^2+|U^n_{j,k}|^4}{3}v_{j,k}\\ 
		&- \frac{|2w_{j,k}-U^n_{j,k}|^4+|2w_{j,k}-U^n_{j,k}|^2|U^n_{j,k}|^2+|U^n_{j,k}|^4}{3}w_{j,k},\\
		\phi_3(v_{j,k},w_{j,k}) &= |v_{j,k}|^2v_{j,k}-|w_{j,k}|^2w_{j,k}.
	\end{align*}
	Now we will estimate the term $\phi_1(v_{j,k},w_{j,k})$. Note that
	\begin{align*}
		\phi_1(v_{j,k},w_{j,k}) &= \frac{|2v_{j,k}-U^n_{j,k}|^2+|U^n_{j,k}|^2}{2}v_{j,k}-\frac{|2w_{j,k}-U^n_{j,k}|^2+|U^n_{j,k}|^2}{2}w_{j,k}\\
		&=\frac{|2v_{j,k}-U^n_{j,k}|^2+|U^n_{j,k}|^2}{2}\chi_{j,k} +\frac{|2v_{j,k}-U^n_{j,k}|^2-|2w_{j,k}-U^n_{j,k}|^2}{2}w_{j,k}.
	\end{align*}
	Therefore,
	\begin{align*}
		|\phi_1(v_{j,k},w_{j,k})|&\leq 4(|v_{j,k}|^2+|U^n_{j,k}|^2)|\chi_{j,k}| + (2|v_{j,k}|+2|w_{j,k}|+2|U^n_{j,k}|)|w_{j,k}||\chi_{j,k}|\\
		&\leq 5(|v_{j,k}|^2+|w_{j,k}|^2+|U^n_{j,k}|^2)|\chi_{j,k}|.
	\end{align*}
	Similarly, there holds
	\begin{align*}
		\phi_2(v_{j,k},w_{j,k})%&=\frac{|2v-U^n_{j,k}|^4+|2v-U^n_{j,k}|^2|U^n_{j,k}|^2+|U^n_{j,k}|^4}{3}v_{j,k} - \frac{|2w-U^n_{j,k}|^4+|2w-U^n_{j,k}|^2|U^n_{j,k}|^2+|U^n_{j,k}|^4}{3}w_{j,k}\\
		&=\frac{|2v_{j,k}-U^n_{j,k}|^4+|2v_{j,k}-U^n_{j,k}|^2|U^n_{j,k}|^2+|U^n_{j,k}|^4}{3}\chi_{j,k} \\+ &(|2v_{j,k}-U^n_{j,k}|^2-|2w_{j,k}-U^n_{j,k}|^2)\frac{|2v_{j,k}-U^n_{j,k}|^2+|2w_{j,k}-U^n_{j,k}|^2+|U^n_{j,k}|^2}{3}w_{j,k}.
	\end{align*}
	It implies
	\begin{align*}
		|\phi_2(v_{j,k},w_{j,k})|\leq 40(|v_{j,k}|^4+|w_{j,k}|^4+|U^n_{j,k}|^4)|\chi_{j,k}|.
	\end{align*}
	Similarly, one can obtain
	\begin{align*}
		|\phi_3(v_{j,k},w_{j,k})|&\leq |v_{j,k}|^2|\chi_{j,k}|+||v_{j,k}|^2-|w_{j,k}|^2||w_{j,k}|\\
		&\leq |v_{j,k}|^2|\chi_{j,k}| +(|v_{j,k}|+|w_{j,k}|)|\chi_{j,k}||w_{j,k}|\\
		&\leq |\chi_{j,k}|(|v_{j,k}|^2+|v_{j,k}||w_{j,k}| + |w_{j,k}|^2)\\
		&\leq \frac{3}{2}(|v_{j,k}|^2+ |w_{j,k}|^2)|\chi_{j,k}|.
	\end{align*}
	Taking in (\ref{unique_CNFD}) the inner product $\langle\cdot,\cdot\rangle_h$ with $\chi$, taking the real and imaginary parts, respectively, and using H\"older's inequality on the right-hand sides of the resulting identities, it yields
	\begin{align}
		\|\chi\|_{2,h}^2 &\leq \frac{5\tau|\lambda|}{2}(\|v\|^2_{4,h} +\|w\|^2_{4,h} + \|U^n\|^2_{4,h})\|\chi\|^2_{4,h}  +\frac{3\varepsilon\tau}{4}(\|v\|^2_{4,h}+\|w\|^2_{4,h}) \|\chi\|^2_{4,h} \nonumber\\ &+20\nu\tau(\|v\|^4_{8,h} +\|w\|^4_{8,h} + \|U^n\|^4_{8,h})\|\chi\|^2_{4,h}, \label{ineq:est_norml2chi}\\
		\|\delta^+\chi\|_{2,h}^2& \leq 5|\lambda|(\|v\|^2_{4,h} +\|w\|^2_{4,h} + \|U^n\|^2_{4,h})\|\chi\|^2_{4,h}  + \frac{3\varepsilon}{2}(\|v\|^2_{4,h}+\|w\|^2_{4,h}) \|\chi\|^2_{4,h} \nonumber\\ & +40\nu(\|v\|^4_{8,h} +\|w\|^4_{8,h} + \|U^n\|^4_{8,h})\|\chi\|^2_{4,h}.\label{ineq:est_normh1chi}
	\end{align}
	From  the inequality  \eqref{est_normu2p_2} in Lemma \ref{lm:est_normu2p}, Lemma \ref{lm:discreteConservativeMass}, Lemma \ref{lm:bound_gradient_byEnergy}, and Lemma \ref{lm:EUnbounded}, it implies, for $m=0,1,\cdots,N$, 
	\begin{align*}
		\|U^m\|^4_{8,h} &\leq \sqrt{2}\|U^m\|^3_{6,h}\|\delta^+ U^m\|_{2,h} \leq \frac{\sqrt{3}}{\sqrt{\nu}}
		\left(\|\delta^+U^m\|^2_{2,h} + \frac{\nu}{6}\|U^m\|^6_{6,h}\right)\\
		&\leq \frac{\sqrt{3}}{\sqrt{\nu}}\left( E(U^m)+\frac{3\lambda^2}{8\nu}\|U^m\|^2_{2,h}\right) \\
		&\leq  \frac{\sqrt{3}}{\sqrt{\nu}}\left(E_h(U^0) +\frac{5\lambda^2}{8\nu} \|U^0\|^2_{2,h}\right).
	\end{align*}
	Then $\|v\|^4_{8,h}, \|w\|^4_{8,h}, \|U^n\|^4_{8,h}$ are bounded. Applying the inequality \eqref{ineq:poincare1} in Lemma \ref{lm:est_normu2p}, we have 
	\begin{align*}
		\|U^m\|^4_{4,h}\leq \|\delta^+U^m\|^2_{2,h}\|U^m\|^2_{2,h}\leq \left(E_h(U^0) + \frac{5\lambda^2}{8\nu}\|U^0\|^2_{2,h}\right)\|U^0\|^2_{2,h}.
	\end{align*}
	From previous inequality, $\|v\|^2_{4,h}, \|w\|^2_{4,h}, \|U^n\|^2_{4,h}$ are bounded. The inequalities (\ref{ineq:est_norml2chi}), (\ref{ineq:est_normh1chi}) become
	\begin{align}
		\|\chi\|_{2,h}^2 &\leq c_1\tau\|\chi\|^2_{4,h}\label{ineq:est_norml2chi1},\\
		\|\delta^+\chi\|_{2,h}^2 &\leq c_2\|\chi\|^2_{4,h}.\label{ineq:est_normh1chi2}
	\end{align}
	Multiplying (\ref{ineq:est_norml2chi1}) with (\ref{ineq:est_normh1chi2}) side by side and using \eqref{ineq:poincare1}, it yields
	\begin{align*}
		\|\chi\|^4_{4,h}\leq c_1c_2\tau\|\chi\|^4_{4,h}.
	\end{align*}
 That is, one can achieve the uniqueness of a solution when $\tau$ is small enough. This completes the proof of Theorem  \ref{thm:exist_unique_sol_CNFD} for the existence and uniqueness solution. %\qed
\end{proof}
\subsection{Convergence}

The main results of the paper are presented in the following theorem for convergence.
\begin{thm}[Convergence]\label{thm:est_error_L2H01}
	Let $u$ be an exact solution to (\ref{eq:NLS}). Assume $u$ is smooth enough and $\{U^n\}_{n=0}^N$ satisfies (\ref{CNFD}). Then, if $h$ is sufficiently small and $\tau \lesssim h$, the following estimations hold:
	\begin{align}
		\max_{n\in [1,N]}\|u^n-U^n\|_{2,h} &\lesssim \tau^2+h^2,\label{ineq:errorUandu_L2}\\
		\max_{n\in [1,N]}\|\delta^+ (u^n- U^n)\|_{2,h} &\lesssim \tau^2+h^2.\label{ineq:errorUandu_H01}
	\end{align}
\end{thm}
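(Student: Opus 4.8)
The plan is to estimate the error $e^n := u^n - U^n$, where $u^n_{j,k} = u(x_j,y_k,t_n)$, by a discrete energy method combined with mathematical induction on $n$. Since $u$ is smooth, the consistency analysis of \ref{Appendix} shows that $u^n$ satisfies the scheme \eqref{CNFD} up to a local truncation term $R^n$ with $\|R^n\|_{2,h}\lesssim \tau^2+h^2$. Subtracting from \eqref{CNFD} yields the error equation
\begin{align*}
i\delta_t^+e^n+\delta^2 e^{n+1/2}+\lambda\,\Psi_1^n-\nu\,\Psi_2^n+i\varepsilon\,\Phi^n = R^n,
\end{align*}
where $\Psi_1^n=\psi_1(u^{n+1},u^n)-\psi_1(U^{n+1},U^n)$ and $\Psi_2^n,\Phi^n$ are the analogous differences of $\psi_2$ and $\varphi$. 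First I would pair this equation with $e^{n+1/2}$ in $\langle\cdot,\cdot\rangle_h$ and take imaginary parts: the diffusion term drops by the discrete Green identities \eqref{discretegreen_x}--\eqref{discretegreen_y}, the time term produces $(\|e^{n+1}\|_{2,h}^2-\|e^n\|_{2,h}^2)/2\tau$, and the term $\langle R^n,e^{n+1/2}\rangle_h$ is handled by Cauchy--Schwarz. This provides the $L^2$ half of the estimate.

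The decisive ingredient is the pointwise Lipschitz control of the nonlinear differences. Mirroring the bounds on $\phi_1,\phi_2,\phi_3$ from the uniqueness proof of Theorem \ref{thm:exist_unique_sol_CNFD}, one has
\begin{align*}
|\Psi_1^n|&\lesssim \big(|u^{n+1}|^2+|u^n|^2+|U^{n+1}|^2+|U^n|^2\big)\big(|e^{n+1}|+|e^n|\big),\\
|\Psi_2^n|&\lesssim \big(|u^{n+1}|^4+|u^n|^4+|U^{n+1}|^4+|U^n|^4\big)\big(|e^{n+1}|+|e^n|\big),
\end{align*}
and similarly for $\Phi^n$ with a quadratic weight. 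To turn these into usable estimates I would invoke a uniform $\|\cdot\|_{\infty,h}$ bound on the numerical solution, which is exactly where $\tau\lesssim h$ and small $h$ enter. Running the induction with the assumption $\|e^m\|_{2,h}+\|\delta^+e^m\|_{2,h}\le C(\tau^2+h^2)$ for $m\le n$, the two-dimensional discrete inverse inequality gives $\|e^m\|_{\infty,h}\lesssim h^{-1}\|e^m\|_{2,h}\lesssim h^{-1}(\tau^2+h^2)\to 0$, so that $\|U^m\|_{\infty,h}\le \|u^m\|_{\infty,h}+1$ once $h$ is small. The weights above are then uniformly bounded, using also the a priori bounds of Lemmas \ref{lm:discreteConservativeMass}, \ref{lm:bound_gradient_byEnergy} and \ref{lm:EUnbounded} for the coefficients at level $n+1$, so the nonlinear and damping contributions are dominated by $C(\|e^{n+1}\|_{2,h}^2+\|e^n\|_{2,h}^2)$.

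For the $H^1$ part I would pair the same error equation with $e^{n+1}-e^n$ and take real parts, exactly as in the energy computation \eqref{realpartdiffusion1}: the time term is purely imaginary and drops, while the diffusion term supplies $\tfrac12(\|\delta^+e^{n+1}\|_{2,h}^2-\|\delta^+e^n\|_{2,h}^2)$. The nonlinear differences are again bounded through the Lipschitz inequalities and the uniform $\|\cdot\|_{\infty,h}$ bound, and, where a gradient factor survives, through the discrete Gagliardo--Nirenberg inequality \eqref{ineq:poincare1} together with Young's inequality to absorb a small multiple of $\|\delta^+e^{n+1}\|_{2,h}^2$. Adding the two half-estimates and setting $\mathcal{E}^n=\|e^n\|_{2,h}^2+\|\delta^+e^n\|_{2,h}^2$ yields $\mathcal{E}^{n+1}-\mathcal{E}^n\le C\tau(\mathcal{E}^{n+1}+\mathcal{E}^n)+C\tau(\tau^2+h^2)^2$; for $\tau$ small the $\mathcal{E}^{n+1}$ contribution is absorbed, and a discrete Gronwall inequality gives $\mathcal{E}^n\lesssim(\tau^2+h^2)^2$, which is both \eqref{ineq:errorUandu_L2} and \eqref{ineq:errorUandu_H01} and simultaneously closes the induction.

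I expect the main obstacle to be the interplay between the absence of the $H^1\hookrightarrow L^\infty$ embedding in two dimensions and the cubic--quintic nonlinearity: the quintic difference carries a quartic weight, so without $L^\infty$ control one is forced into high discrete $L^p$ norms whose interpolation produces superquadratic powers of $\|\delta^+e\|_{2,h}$ that a linear Gronwall cannot close. The induction-plus-inverse-inequality device under $\tau\lesssim h$ is precisely what removes this difficulty, and verifying that the level-$(n+1)$ coefficients, notably $\|U^{n+1}\|_{\infty,h}$, are controlled before the estimate for $e^{n+1}$ is in hand is the delicate bookkeeping point of the argument.
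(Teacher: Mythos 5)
Your overall strategy (error equation, pairing with $e^{n+1/2}$ for the $L^2$ part and with $e^{n+1}-e^{n}$ for the $H^1$ part, consistency error from \ref{Appendix}, discrete Gr\"onwall) matches the paper's, but you reach the crucial $L^\infty$ control of the numerical solution by a genuinely different route. The paper does \emph{not} run an induction on the original scheme: it first replaces $\psi_1,\psi_2,\varphi$ by truncated, globally Lipschitz functions $\tilde\psi_1,\tilde\psi_2,\tilde\varphi$ built from the bound $M$ on the exact solution, proves the $L^2$ estimate \eqref{ineq:errorVandu} for the auxiliary solution $V^n$ of the truncated scheme \eqref{soltuonV_globallylipschtiz} in Proposition \ref{pro3_9} --- an argument that needs \emph{no} bounds on $V^n$ at all, precisely because the truncated nonlinearities are globally Lipschitz --- and only then uses the inverse inequality $\|\omega\|_{\infty,h}\le h^{-1}\|\omega\|_{2,h}$ together with $\tau\lesssim h$ to conclude $\|V^n\|_{\infty,h}\le M$, so that $V^n$ solves the original scheme \eqref{CNFD} and equals $U^n$ by the uniqueness in Theorem \ref{thm:exist_unique_sol_CNFD}. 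The truncation-plus-uniqueness device is exactly what removes the circularity you flag at the end of your proposal.

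That circularity is, as written, a genuine gap in your argument. The scheme is implicit, so $U^{n+1}$ enters $\Psi_1^n,\Psi_2^n,\Phi^n$ \emph{before} any estimate of $e^{n+1}$ is available; your induction hypothesis combined with the inverse inequality controls $\|U^{m}\|_{\infty,h}$ only for $m\le n$, and the a priori bounds of Lemmas \ref{lm:discreteConservativeMass}, \ref{lm:bound_gradient_byEnergy} and \ref{lm:EUnbounded} that you invoke ``for the coefficients at level $n+1$'' give only discrete $L^2$, $H^1$ and energy control, which in two space dimensions does \emph{not} yield $\|U^{n+1}\|_{\infty,h}\lesssim 1$. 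So the pointwise Lipschitz bounds on $\Psi_1^n,\Psi_2^n,\Phi^n$ cannot be converted into the claimed estimate $C(\|e^{n+1}\|_{2,h}^2+\|e^{n}\|_{2,h}^2)$ at the current level by the means you describe. The step can be repaired without the paper's truncation: replace the pointwise bounds by H\"older together with the discrete Gagliardo--Nirenberg inequalities of Lemma \ref{lm:est_normu2p}, so that the level-$(n+1)$ weights are measured in $\|U^{n+1}\|_{8,h}^4$ and $\|U^{n+1}\|_{4,h}^2$ (which \emph{are} a priori bounded, exactly as in the uniqueness proof), at the price of a factor $\|e\|_{4,h}^2\lesssim\|e\|_{2,h}\|\delta^+e\|_{2,h}$ that forces the coupled $L^2$--$H^1$ Gr\"onwall you already set up. But this repair must be carried out explicitly; as it stands, the decisive inequality of your $L^2$ step is asserted rather than proved, whereas the paper's Proposition \ref{pro3_9} closes the corresponding step with no such difficulty.
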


Before proving Theorem \ref{thm:est_error_L2H01}, we introduce the following settings to rewrite the CNFD scheme (\ref{CNFD}) in terms of globally Lipschitz continuous functions and then establish Proposition \ref{pro3_9}. 

Let $M:=\max\{|u(\xx,t)|:(\xx,t)\in\Omega\times(0,T)\}+1$. We define the auxiliary functions $\tilde{\varphi}, \tilde{\psi}_1,\tilde{\psi}_2:\mathbb{C}\times\mathbb{C}\rightarrow \mathbb{C}$ as follows
\begin{align*}
	\tilde{\varphi}(z,w)&:=\left\{\begin{array}{cc}
		\varphi(z,w) & \text{if }\big|\frac{z+w}{2}\big| \leq M,\\
		\frac{M^2}{2}(z+w) & \text{if }\big|\frac{z+w}{2}\big| > M,
	\end{array}
	\right.\\
	\tilde{\psi}_1(z,w)&:=\left\{\begin{array}{cc}
		\psi_1(z,w) & \text{if }|z|,|w| \leq M,\\
		\frac{F_1(M^2)-F_1(|w|^2)}{M^2-|w|^2}\frac{(z+w)}{2} & \text{if }|z|> M,|w| \leq M,\\
		\frac{F_1(|z|^2)-F_1(M^2)}{|z|^2-M^2}\frac{(z+w)}{2} & \text{if }|z|\leq M,|w| > M,\\
		f_1(M^2)\frac{(z+w)}{2} & \text{if }|z|> M,|w| >  M,
	\end{array}
	\right.\\
	\tilde{\psi}_2(z,w)&:=\left\{\begin{array}{cc}
		\psi_2(z,w) & \text{if }|z|,|w| \leq M,\\
		\frac{F_2(M^2)-F_2(|w|^2)}{M^2-|w|^2}\frac{(z+w)}{2} & \text{if }|z|> M,|w| \leq M,\\
		\frac{F_2(|z|^2)-F_2(M^2)}{|z|^2-M^2}\frac{(z+w)}{2} & \text{if }|z|\leq M,|w| > M,\\
		f_2(M^2)\frac{(z+w)}{2} & \text{if }|z|> M,|w| >  M.
	\end{array}
	\right.
\end{align*}
The functions $\tilde{\varphi}$, $\tilde{\psi}_1$, and $\tilde{\psi}_2$ are globally Lipschitz continuous. Let $V^0:=U^0$ and let $V^n\in X_{JK}$, for $(j,k) \in \T_{JK}$, $n=1,\cdots,N$, satisfy 
\begin{align}
	i\delta_t^+V^n_{j,k}+\delta^2 V_{j,k}^{n+1/2} +\lambda\tilde{\psi}_1(V^{n+1}_{j,k},V^n_{j,k}) -\nu\tilde{\psi}_2(V^{n+1}_{j,k},V^n_{j,k}) +i\varepsilon \tilde{\varphi}(V^{n+1}_{j,k},V^n_{j,k})=0.\label{soltuonV_globallylipschtiz}
\end{align}

\begin{pro} \label{pro3_9}
	Let $u$ be an exact solution to (\ref{eq:NLS}). Assume $u$ is smooth enough and $\{V^n\}_{n=1}^N$ satisfies (\ref{soltuonV_globallylipschtiz}). Then, for sufficiently small $\tau$, the following estimation holds:
	\begin{align}\label{ineq:errorVandu}
		\max_{n\in [1,N]}\|u^n-V^n\|_{2,h} \leq c(\tau^2+h^2),
	\end{align}
	where $c$ is a constant independent of $h$ and $\tau$.
\end{pro}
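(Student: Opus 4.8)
The plan is to run a standard discrete energy-error analysis for the Crank--Nicolson scheme, exploiting that the truncated nonlinearities $\tilde{\psi}_1,\tilde{\psi}_2,\tilde{\varphi}$ are globally Lipschitz. Write $u^n_{j,k}:=u(x_j,y_k,t_n)$ for the exact solution sampled on the grid and set $e^n:=u^n-V^n$. Since $V^0=U^0=u_0(x_j,y_k)=u^0$, the initial error vanishes, $e^0=0$, which is what makes the final Gronwall estimate clean.

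First I would record the consistency of the scheme. Because $|u(\xx,t)|\le M-1<M$ on $\overline{\Omega}\times[0,T]$, the truncated functions agree with the genuine ones along the exact solution, so $\tilde{\psi}_i(u^{n+1},u^n)=\psi_i(u^{n+1},u^n)$ and $\tilde{\varphi}(u^{n+1},u^n)=\varphi(u^{n+1},u^n)$ at every grid point. Substituting $u$ into \eqref{soltuonV_globallylipschtiz} therefore produces a residual
\begin{align*}
	i\delta_t^+u^n_{j,k}+\delta^2 u^{n+1/2}_{j,k}
	&+\lambda\tilde{\psi}_1(u^{n+1}_{j,k},u^n_{j,k})-\nu\tilde{\psi}_2(u^{n+1}_{j,k},u^n_{j,k})\\
	&+i\varepsilon\tilde{\varphi}(u^{n+1}_{j,k},u^n_{j,k})=R^n_{j,k},
\end{align*}
and a Taylor expansion in time together with the centered-difference error in space (carried out in \ref{Appendix}) yields the truncation bound $\|R^n\|_{2,h}\le c(\tau^2+h^2)$.

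Next I would subtract \eqref{soltuonV_globallylipschtiz} from this identity to obtain the error equation
\begin{align*}
	i\delta_t^+e^n_{j,k}+\delta^2 e^{n+1/2}_{j,k}
	&=-\lambda\big[\tilde{\psi}_1(u^{n+1},u^n)-\tilde{\psi}_1(V^{n+1},V^n)\big]\\
	&\quad+\nu\big[\tilde{\psi}_2(u^{n+1},u^n)-\tilde{\psi}_2(V^{n+1},V^n)\big]\\
	&\quad-i\varepsilon\big[\tilde{\varphi}(u^{n+1},u^n)-\tilde{\varphi}(V^{n+1},V^n)\big]+R^n,
\end{align*}
take the inner product $\langle\cdot,\cdot\rangle_h$ with $e^{n+1/2}$, and extract the imaginary part. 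As in Lemma~\ref{lm:discreteConservativeMass} the time term produces $\frac{\|e^{n+1}\|^2_{2,h}-\|e^n\|^2_{2,h}}{2\tau}$, while the diffusion term $\langle\delta^2 e^{n+1/2},e^{n+1/2}\rangle_h=-\|\delta^+e^{n+1/2}\|^2_{2,h}$ is real and drops out. The global Lipschitz continuity of $\tilde{\psi}_1,\tilde{\psi}_2,\tilde{\varphi}$ gives pointwise bounds $|\tilde{\psi}_i(u^{n+1},u^n)-\tilde{\psi}_i(V^{n+1},V^n)|\le L(|e^{n+1}|+|e^n|)$ (and likewise for $\tilde{\varphi}$), so each nonlinear contribution is controlled by $C(\|e^{n+1}\|^2_{2,h}+\|e^n\|^2_{2,h})$ via Cauchy--Schwarz and Young's inequality; the residual is handled by $|\langle R^n,e^{n+1/2}\rangle_h|\le\|R^n\|^2_{2,h}+\tfrac14(\|e^{n+1}\|^2_{2,h}+\|e^n\|^2_{2,h})$.

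Collecting these estimates gives the recursion
\begin{align*}
	\|e^{n+1}\|^2_{2,h}-\|e^n\|^2_{2,h}\le C\tau\big(\|e^{n+1}\|^2_{2,h}+\|e^n\|^2_{2,h}\big)+C\tau(\tau^2+h^2)^2.
\end{align*}
For $\tau$ small enough that $1-C\tau>0$, I would absorb the $\|e^{n+1}\|^2_{2,h}$ term on the left to reach $\|e^{n+1}\|^2_{2,h}\le(1+C'\tau)\|e^n\|^2_{2,h}+C'\tau(\tau^2+h^2)^2$, and then close the argument with the discrete Gronwall inequality; since $e^0=0$ and $n\tau\le T$, summation yields $\|e^n\|^2_{2,h}\le C''e^{C'T}(\tau^2+h^2)^2$, which is exactly \eqref{ineq:errorVandu}. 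The one genuinely delicate ingredient is the consistency estimate deferred to \ref{Appendix}: one must check that the midpoint averaging of the cubic and quintic nonlinearities reproduces the continuous nonlinearity to second order in $\tau$, which relies on the symmetric structure of the scheme and the smoothness of $u$. Everything downstream is then a routine Lipschitz-plus-Gronwall argument, and the whole advantage of this proposition is that passing to the globally Lipschitz truncation removes the need for any a priori $L^\infty$ control of $V^n$ at this stage, postponing that difficulty to the proof of Theorem~\ref{thm:est_error_L2H01} where one must still show $V^n=U^n$.
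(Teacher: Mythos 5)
Your proposal is correct and follows essentially the same route as the paper: form the consistency error $r^n$ (bounded by $\tau^2+h^2$ via the Appendix), write the error equation for $e^n=u^n-V^n$, take the $\langle\cdot,\cdot\rangle_h$ inner product with $e^{n+1/2}$, extract the imaginary part so the diffusion term drops, invoke the global Lipschitz continuity of $\tilde{\psi}_1,\tilde{\psi}_2,\tilde{\varphi}$, and close with discrete Gr\"onwall. The only cosmetic difference is that you run the recursion on $\|e^n\|_{2,h}^2$ via Young's inequality whereas the paper divides through to obtain a recursion on $\|e^n\|_{2,h}$ itself; both yield the same conclusion.
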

\begin{proof}
	Let $r^n\in X_{JK}$ be the consistency error of the method (\ref{soltuonV_globallylipschtiz}) or (\ref{CNFD}), i.e, for $(j,k)\in\T_{JK}$ with $u^{n+1/2}=\frac{1}{2}(u^{n+1}+u^n)$, 
	\begin{align}
		r^n_{j,k} = i\delta_t^+u^n_{j,k} + \delta^2 u_{j,k}^{n+1/2} + \lambda\tilde{\psi}_1(u^{n+1}_{j,k},u^n_{j,k}) - \nu\tilde{\psi}_2(u^{n+1}_{j,k},u^n_{j,k}) +i\varepsilon\tilde{\varphi}(u^{n+1}_{j,k},u^n_{j,k}).\label{df:rnjk}
	\end{align}
	Let $e^n:=u^n-V^n\in X_{JK}$, $n = 0,1,\cdots,N$. Then we have
	\begin{align}\label{truncationerrorCNFD}
		i\delta_t^+e^n_{j,k} + \delta^2 e_{j,k}^{n+1/2} + \lambda(\tilde{\psi}_1(u^{n+1}_{j,k},u^n_{j,k})-\tilde{\psi}_1(V^{n+1}_{j,k},V^n_{j,k}))  \nonumber\\
		- \nu(\tilde{\psi}_2(u^{n+1}_{j,k},u^n_{j,k})  -\tilde{\psi}_2(V^{n+1}_{j,k},V^n_{j,k})) +i\varepsilon(\tilde{\varphi}(u^{n+1}_{j,k},u^n_{j,k}) -\tilde{\varphi}(V^{n+1}_{j,k},V^n_{j,k})) - r_{j,k}^n  = 0.
	\end{align}
	We take the inner product $\langle\cdot,\cdot\rangle_h$  with $e^{n+1/2}$ and then take the imaginary parts. By applying the Schwarz inequality and using the Lipschitz of $\tilde{\varphi},\tilde{\psi}_1,\tilde{\psi}_2 $, we obtain 
	\begin{align*}
		\|e^{n+1}\|^2_{2,h} - \|e^{n}\|^2_{2,h} \leq C\tau(\|e^{n+1}\|_{2,h}  + \|e^{n}\|_{2,h} + \|r^n\|_{2,h})\|e^{n+1/2}\|_{2,h}.
	\end{align*}
	Rewriting the previous estimation with inequality $\|e^{n+1/2}\|_{2,h}\leq \frac{1}{2}(\|e^{n+1}\|_{2,h} + \|e^{n}\|_{2,h})$, there holds
	\begin{align*}
		\|e^{n+1}\|_{2,h} - \|e^{n}\|_{2,h} \leq \frac{C}{2}\tau(\|e^{n+1}\|_{2,h}  + \|e^{n}\|_{2,h} + \|r^n\|_{2,h}).
	\end{align*}
	From \ref{lm:est_rn} for the estimation of $ \|r^n\|_{2,h}$, we obtain
	\begin{align*}
		(1-\frac{C}{2}\tau)\|e^{n+1}\|_{2,h}\leq (1+\frac{C}{2}\tau)\|e^{n}\|_{2,h} + \frac{C_1C}{2}\tau(\tau^2+h^2).
	\end{align*}
	The final result follows in view of discrete Gr\"onwall's theorem.%\qed 
\end{proof}
Now we will prove Theorem \ref{thm:est_error_L2H01}.\\
\begin{proof}[\bf{Proof of Theorem \ref{thm:est_error_L2H01}}]
	For $\omega\in X_{JK}$, applying the following inequality
	\begin{align*}
		\|\omega\|_{\infty,h} \leq \frac{1}{h}\|\omega\|_{2,h},
	\end{align*}
	and from estimation (\ref{ineq:errorVandu}) in Proposition \ref{pro3_9}, it yields
	\begin{align*}
		\|u^n- V^n\|_{\infty,h} \leq c(\frac{\tau^2}{h}+h),~~~\forall~n=0,1,\cdots,N.
	\end{align*}
	This implies
	\begin{align*}
		\|V^n\|_{\infty,h} \leq \|u^n\|_{\infty,h}+c(\frac{\tau^2}{h}+h), ~~~\forall~n=0,1,\cdots,N.
	\end{align*}
	That is, for a   sufficiently small value of $h$ and $\tau\lesssim h$ such that $c(\frac{\tau^2}{h}+h)<1$, it arrives at
	\begin{align*}
		\|V^n\|_{\infty,h}\leq M, ~~~\forall~n =0,1,\cdots,N.
	\end{align*}
	Therefore, $\{V^n\}_{n=0}^{N}$ satisfies (\ref{CNFD}). From the unique solution in Theorem \ref{thm:exist_unique_sol_CNFD}, we have $V^n=U^n$. From this result and from the inequality (\ref{ineq:errorVandu}), the proof of \eqref{ineq:errorUandu_L2} is completed.\\
	Now, we will estimate the error on discrete norm $\|\delta^+\cdot\|_{2,h}$ in \eqref{ineq:errorUandu_H01}. One can write  (\ref{truncationerrorCNFD}) as  follows:
	\begin{align}\label{truncationerrorCNFD1}
		i\delta_t^+e^n_{j,k} + \chi_{j,k}^n + \eta_{j,k}^n -\zeta^n_{j,k}  + \xi_{j,k}^n - r_{j,k}^n  = 0,
	\end{align}
	where $\chi^n, \eta^n, \zeta^n, \xi^n \in X_{JK}$ and 
	\begin{align*}
		\chi^n_{j,k} &= \delta^2 e_{j,k}^{n+1/2},\\
		\eta_{j,k}^n &=\lambda({\psi}_1(u^{n+1}_{j,k},u^n_{j,k}) - {\psi}_1(U^{n+1}_{j,k},U^n_{j,k}) ),\\
		\zeta_{j,k}^n & = \nu ({\psi}_2(u^{n+1}_{j,k},u^n_{j,k}) - {\psi}_2(U^{n+1}_{j,k},U^n_{j,k}) ),\\
		\xi_{j,k}^n & = i\varepsilon({\varphi}(u^{n+1}_{j,k},u^n_{j,k}) -{\varphi}(U^{n+1}_{j,k},U^n_{j,k})).
	\end{align*}
	The terms $\eta_{j,k}^n,\; \zeta_{j,k}^n$  and $\xi^n_{j,k}$ can be written as the following expressions:
	\begin{align*}
		\eta_{j,k}^n &=\frac{\lambda}{2}\left[e^n_{j,k}\overline{u^n_{j,k}}+U^n_{j,k}\overline{e_j^n}+e^{n+1}_{j,k}\overline{u^{n+1}_{j,k}}+U^{n+1}_{j,k}\overline{e^{n+1}_{j,k}}\right]U^{n+1/2}_{j,k} + \frac{\lambda}{2}(|u_{j,k}^n|^2 + |u^{n+1}_{j,k}|^2)e^{n+1/2}_{j,k},
	\end{align*}
	\begin{align*}
		\zeta_{j,k}^n &=\frac{\nu}{3}(|u^{n+1}_{j,k}|^2-|U^{n+1}_{j,k}|^2)(|u^{n+1}_{j,k}|^2+|u^n_{j,k}|^2+|U^{n+1}_{j,k}|^2)U^{n+1/2}_{j,k}\\&+ \frac{\nu}{3}(|u^{n}_{j,k}|^2-|U^{n}_{j,k}|^2)(|u^{n}_{j,k}|^2+|U^n_{j,k}|^2+|U^{n+1}_{j,k}|^2)U^{n+1/2}_{j,k}\\
		&+ \frac{\nu}{3}(|u^{n+1}_{j,k}|^4+|u^{n+1}_{j,k}|^2|u^{n}_{j,k}|^2+ |u^{n}_{j,k}|^4)e^{n+1/2}_{j,k}\\
		& =\frac{\nu}{3}(e^{n+1}_{j,k}\overline{u^{n+1}_{j,k}}+U^{n+1}_{j,k}\overline{e^{n+1}_{j,k}})(|u^{n+1}_{j,k}|^2+|u^n_{j,k}|^2+|U^{n+1}_{j,k}|^2)U^{n+1/2}_{j,k}\\
		&+\frac{\nu}{3}(e^n_{j,k}\overline{u^n_{j,k}}+U^n_{j,k}\overline{e_j^n})(|u^{n}_{j,k}|^2+|U^n_{j,k}|^2+|U^{n+1}_{j,k}|^2)U^{n+1/2}_{j,k}\\
		&+\frac{\nu}{3}(|u^{n+1}_{j,k}|^4+|u^{n+1}_{j,k}|^2|u^{n}_{j,k}|^2+ |u^{n}_{j,k}|^4)e^{n+1/2}_{j,k},
	\end{align*}
	\begin{align*}
		\xi^n_{j,k} = i\varepsilon\left[e^{n+1/2}_{j,k}\overline{u^{n+1/2}_{j,k}}+U^{n+1/2}_{j,k}\overline{e_j^{n+1/2}}\right]U^{n+1/2}_{j,k} + i\varepsilon |u^{n+1/2}_{j,k}|^2e^{n+1/2}_{j,k}.
	\end{align*}
	Since $U^{n}, U^{n+1}, u^{n}, u^{n+1}, \|\delta^+ U^{n+1}\|_{2,h}, \|\delta^+U^{n}\|_{2,h}, \|\delta^+ u^{n+1}\|_{2,h}, \|\delta^+ u^{n}\|_{2,h}$ are bounded, there holds
	\begin{align*}
		%		\|\eta^n\|_{2,h} \lesssim \|e^{n+1}\|_{2,h} + \|e^{n}\|_{2,h}\\
		\|\delta^+\eta^n\|_{2,h} \lesssim \|\delta^+ e^{n+1}\|_{2,h} + \|\delta^+ e^{n}\|_{2,h},\\
		%		\|\zeta^n\|_{2,h} \lesssim \|e^{n+1}\|_{2,h} + \|e^{n}\|_{2,h}\\
		\|\delta^+\zeta^n\|_{2,h} \lesssim \|\delta^+ e^{n+1}\|_{2,h} + \|\delta^+ e^{n}\|_{2,h},\\
		%		\|\xi^n\|_{2,h} \lesssim \|e^{n+1}\|_{2,h} + \|e^{n}\|_{2,h}\\
		\|\delta^+\xi^n\|_{2,h} \lesssim \|\delta^+ e^{n+1}\|_{2,h} + \|\delta^+ e^{n}\|_{2,h}.
	\end{align*} 
	%	We can rewrite (\ref{truncationerrorCNFD1}) as 
	%	\begin{align}
		%		e^{n+1}_{j,k}-e^{n}_{j,k} = i\tau(\chi^n_{j,k}+\eta^n_{j,k}+\zeta^n_{j,k}+\xi_{j,k}^n - r_{j,k}^n )
		%	\end{align}
	%	where $ $.\\
	Taking in (\ref{truncationerrorCNFD1}) the inner product $\langle\cdot,\cdot\rangle_h$  with $e^{n+1}-e^{n}$ and taking the real part, one can get
	\begin{align}
		\|\delta^+ e^{n+1}\|^2_{2,h}-\|\delta^+ e^{n}\|^2_{2,h} &= 2\Re\langle\eta^n-\zeta^n+\xi^n - r^n,e^{n+1}-e^n\rangle_h\nonumber\\
		&=2\Re\langle \eta^n-\zeta^n+\xi^n - r^n, i\tau(\chi^n+\eta^n+\zeta^n+\xi^n - r^n) \rangle_h\nonumber\\
		&= -2\tau\Im\langle \eta^n-\zeta^n + \xi^n - r^n, \chi^n\rangle_h \label{equ:|e^{n+1}|2_{1,h}-|e^{n}|2_{1,h}}.
	\end{align}
	Using \eqref{discretegreen_x} and \eqref{discretegreen_y}, it yields
	\begin{align*}
		|\langle \eta^n-\zeta^n+\xi^n-r^n, \chi^n\rangle_h| &= |\langle \eta^n-\zeta^n+\xi^n-r^n,\delta^2 e^{n+1/2}\rangle_h|\\
		&= |(-\delta^+\eta^n +\delta^+\zeta^n-\delta^+\xi^n +\delta^+r^n,\delta^+e^{n+1/2})_h|\\
		& \leq  \frac{1}{2}(\|\delta^+ \eta^n\|_{2,h}+\|\delta^+\zeta^n\|_{2,h}+\|\delta^+ \xi^n\|_{2,h}+\|\delta^+ r^n\|_{2,h})\\
		&\times(\|\delta^+ e^{n+1}\|_{2,h}+\|\delta^+ e^{n}\|_{2,h}).
	\end{align*}
	Using the estimations of $\|\delta^+ \eta^n\|_{2,h}$, $\|\delta^+ \zeta^n\|_{2,h}$ $\|\delta^+ \xi^n\|_{2,h}$ and of $\|\delta^+ r^n\|_{2,h}$ in the second estimation of \ref{lm:est_rn}, it yields
	\begin{align}
		|\langle \eta^n+\zeta^n+\xi^n - r^n, \chi^n\rangle_h| \lesssim \|\delta^+ e^{n}\|^2_{2,h} + \|\delta^+ e^{n+1}\|^2_{2,h} + (\tau^2+h^2)^2.\label{est:etazetaxir}
	\end{align}
	From equation \eqref{equ:|e^{n+1}|2_{1,h}-|e^{n}|2_{1,h}}, and inequality \eqref{est:etazetaxir}, it implies
	\begin{align*}
		\|\delta^+ e^{n+1}\|^2_{2,h}-\|\delta^+e^{n}\|^2_{2,h}  \leq \tau(\tau^2+h^2)^2 + \tau(\|\delta^+e^{n+1}\|^2_{2,h} +  \|\delta^+ e^{n}\|^2_{2,h} ).
	\end{align*}
	Using discrete Gr\"onwall's inequality, we obtain
	\begin{align*}
		\|\delta^+ e^{n}\|^2_{2,h} \lesssim (\tau^2+h^2)^2,
	\end{align*}
	which satisfies \eqref{ineq:errorUandu_H01}.\\
 This completes the proof of Theorem \ref{thm:est_error_L2H01}. %\qed 
\end{proof}

%%added June 28 to clarify the comments by the reviewer
We conclude this section by discussing a few open questions related to extending the spatial dimension of the NLS equation, modifying the Crank-Nicolson scheme for temporal discretization, and achieving higher accuracy with a fourth-order spatial difference.

\begin{itemize}
    \item {\bf Remark 3.2} We note that the CNFD presented in the current work could be extended to solve 3D damped CQNLS equations. The challenge in extending the current numerical scheme and convergence analysis to the 3D perturbed CQNLS lies in the increased computational complexity due to the quintic nonlinearity, nonlinear damping, and the potential for more intricate nonlinear interactions in a higher dimension. These require some specific estimations for convergence for solving the $(3+1)$D perturbed CQNLS. However, the fundamental principles of the current approach would remain applicable.
    
     \item {\bf Remark 3.3} Recently, for many parabolic equations, it has been discovered that, a modified Crank-Nicolson approximation would lead to a stronger stability property for many parabolic equations, such as an $H^2$ convergence of a second-order convex-splitting finite difference
     scheme for the three-dimensional Cahn-Hilliard equation, stability
     and convergence of a second order mixed finite element method
     for the Cahn-Hilliard equation \cite{Wang2016, Guo2016}. In fact,
     these studies has revealed that if the Crank-Nicolson
    approximation is replaced by $\frac{3}{4} u^{n+1} + \frac{1}{4} u^{n-1}$, instead
    of the standard formulation of $\frac{1}{2} (u^{n+1} + u^n)$, a stronger
    stability can be proved for the parabolic equations \cite{Wang2016, Guo2016}. For a parabolic PDE such as the Cahn-Hilliard equation, the method  requires some assumptions of periodic boundary conditions and convex splitting. As a result, replacing the standard Crank-Nicolson approximation with $\frac{3}{4} u^{n+1} + \frac{1}{4} u^{n-1}$ can lead to stronger stability properties for parabolic PDEs. While the perturbed $(2+1)$D CQNLS equation is not a parabolic PDE, the principles of the modified scheme may still be applicable but it is not straightforward and needs some specific estimations for convergence. 
    
    \item {\bf Remark 3.4} It is worthy to note that for the {\it unperturbed} cubic NLS equation in two dimensions, a fourth-order compact with a three-point stencil difference scheme in space combined with a Crank-Nicolson finite difference in time has been reported by \cite {Wang2012}. In this scheme, the second derivative at a spatial grid point depends on both the function values and the second derivatives at neighboring points and it is an implicit scheme. Additionally, the energy stable fourth order finite difference scheme also has been reported for solving the Cahn-Hilliard equation \cite{Cheng2019}. These findings encourage and support the solvability for implementing the fourth order long stencil difference spatial approximation combined with the Crank-Nicolson temporal discretization for perturbed (2+1)D NLS-type equations. Due to strong nonlinearities of perturbed (2+1)D CQNLS equations, using a standard fourth order long stencil finite difference with a five-point stencil and Crank-Nicolson temporal discretization for perturbed (2+1)D NLS-type equations is not quite similar and requires specific estimations for proving the unique solvability and convergence. It is an open problem for a class of (2+1)D CQNLS equations. 
    
\end{itemize}

\section{Numerical Experiments}
\label{Num}
In this section, we verify the numerical results of the CNFD scheme \eqref{CNFD} by the numerical experiments of  (\ref{eq:NLS}). To find a discrete solution of the CNFD scheme, the nonlinear system \eqref{CNFD} can be linearized by the fixed point method as follows. For $l\geq 0$, the linearization of \eqref{CNFD} is
\begin{align} \label{linearize}
	\left\{
	\begin{array}{c}
		i\frac{U^{{n,l+1}}-U^n}{\tau}+\bigg[\delta^2+\lambda\frac{F_1(|U^{n,l}|^2)-F_1(|U^{n}|^2)}{|U^{n,l}|^2-|U^{n}|^2} -
		\nu\frac{F_2(|U^{n,l}|^2)-F_2(|U^{n}|^2)}{|U^{n,l}|^2-|U^{n}|^2} +
		i\varepsilon|\frac{U^{n,l}+U^{n}}{2}|^{2}\bigg]\frac{U^{l+1,n}+U^{n}}{2} =0,\\
		U^{n,0}= U^n;~U^{n,l+1}\in X_{JK}.
	\end{array}
	\right.
\end{align}
Note that $\{U^{n,l}\}_{l=0,1,\cdots}$, which is solved by \eqref{linearize}, converges to $U^{n+1}$ as $l \to \infty$. In practice, one can choose $U^{n+1} = U^{n,l+1}$ when
\begin{align*}
	\frac{\|U^{n,l+1}-U^{n,l}\|_{2,h}}{\|U^{n,l+1}\|_{2,h}} \leq 10^{-8}.
\end{align*}
For comparison, the numerical ``exact" solution $u^{(num)}_e$ is obtained by the simulation of  (\ref{eq:NLS}) using the split-step Fourier method (SSFM) with the second-order
accuracy \cite{HN2021,Yang2010}.
Let us define the relative error (R.E.) in calculations of the solution $U(x,y,t)$ as follows:
\begin{align*}
	E_{2,h} = \frac{\big\lVert U - u_e^{(num)} \big\rVert_{2,h}}{\big\lVert u_e^{(num)}\big\rVert_{2,h}},\quad
	E_{1,h} = \frac{\big\|\delta^+( U - u_e^{(num)})\big\|_{2,h}}{\big\|\delta^+ u_e^{(num)} \big\|_{2,h}}.
\end{align*}
%where $U(x,y,t)$ is measured by the CNFD scheme and $u^{(num)}_e(x,y,t)$ is obtained from the SSFM scheme.
The analysis results are validated by the following two numerical tests: Test 1 for an initial condition of a 2D soliton and Test 2 for a nonsolitonic initial condition of a Gaussian beam. The CNFD scheme and all numerical experiments are implemented in Matlab. The CNFD algorithm can be summarized as follows in Algorithm \ref{alg:cap}. 
%%%%%%%%%%%%%%%%%%%%%

\begin{algorithm}
\caption{CNFD algorithm}\label{alg:cap}
\begin{algorithmic}

\State {\bf Input:} Spatial domain $\Omega$, time interval $[0, T]$, spatial mesh size $h$, time step size $\tau$, initial condition of $u_0(\xx)$.
\State Let $n=0$ and $U^0=u_0(\xx)$.
\While{$n<N$} \Comment{N=$T/\tau$}
\State Let $l=0$ and $U^{n,0}=U^n$.
\State Calculate $U^{n,1}$ using Eq. (\ref{linearize}).
  \While{
  $\frac{\|U^{n,l+1}-U^{n,l}\|_{2,h}}{\|U^{n,l+1}\|_{2,h}} > 10^{-8} $}
   \State Let $l=l+1$.
    \State Calculate $U^{n,l+1}$ using Eq. (\ref{linearize}).
\EndWhile
 \State $U^{n+1} = U^{n,l+1}$.
\State Let $n = n+1$.
\EndWhile
\State {\bf Output:} $U=U^{N}$.
\end{algorithmic}
\end{algorithm}

\subsection{Test 1}
In this test, we consider the initial soliton condition for the simulations in the form
\begin{align}
	u_0(x,y) =A_0 v_0(X_0,Y_0)\exp\left[i\alpha_0 + i\chi_0({X_0},{Y_0})\right]. 
	\label{IC}
\end{align}
In (\ref{IC}), 
$A_0$ is the initial amplitude parameter of the soliton,
$X_0 = x-x_0$, $Y_0=y-y_0$,
$\chi_0({X_0},{Y_0}) = {d}_{1}X_0/2+{d}_2Y_0/2$,
where $(x_0,y_0)$ is the initial position and  ${\bf d} = (d_1,d_2)$ is the velocity vector of the soliton with the velocity components in the $x$ and $y$ directions as $d_1$ and $d_2$, respectively, 
$\alpha_0$ is the initial phase, and
$v_0(X_0,Y_0)$ is the localized real-valued amplitude function.
Using $A_0=1$ for simplicity and substituting (\ref{IC}) into (\ref{eq:NLS}) with $\varepsilon=0$, it implies the following elliptic equation for $v_0$  
\cite{HN2021,Yang2010}:
\begin{align}
	\Delta v_0 + \lambda v_0^3 - \nu v_0^5= \mu v_0,
	\label{elliptic_eq1}
\end{align}
where $\mu$ is the propagation constant.
To define the ground state $v_{0}$, one can numerically solved 
(\ref{elliptic_eq1}) using a Fourier iteration method such as the accelerated imaginary-time evolution method (AITEM) proposed by \cite{Yang2008, Yang2010}. 
%For comparison, the numerical ``exact" solution $u^{(num)}_e$ is obtained by the simulation of  (\ref{eq:NLS}) using the split-step Fourier method (SSFM) with the second-order accuracy \cite{HN2021,Yang2010}.    
% fig 1.
\begin{figure}[!h]
	\begin{center}
		\epsfxsize=14.4cm  \epsffile{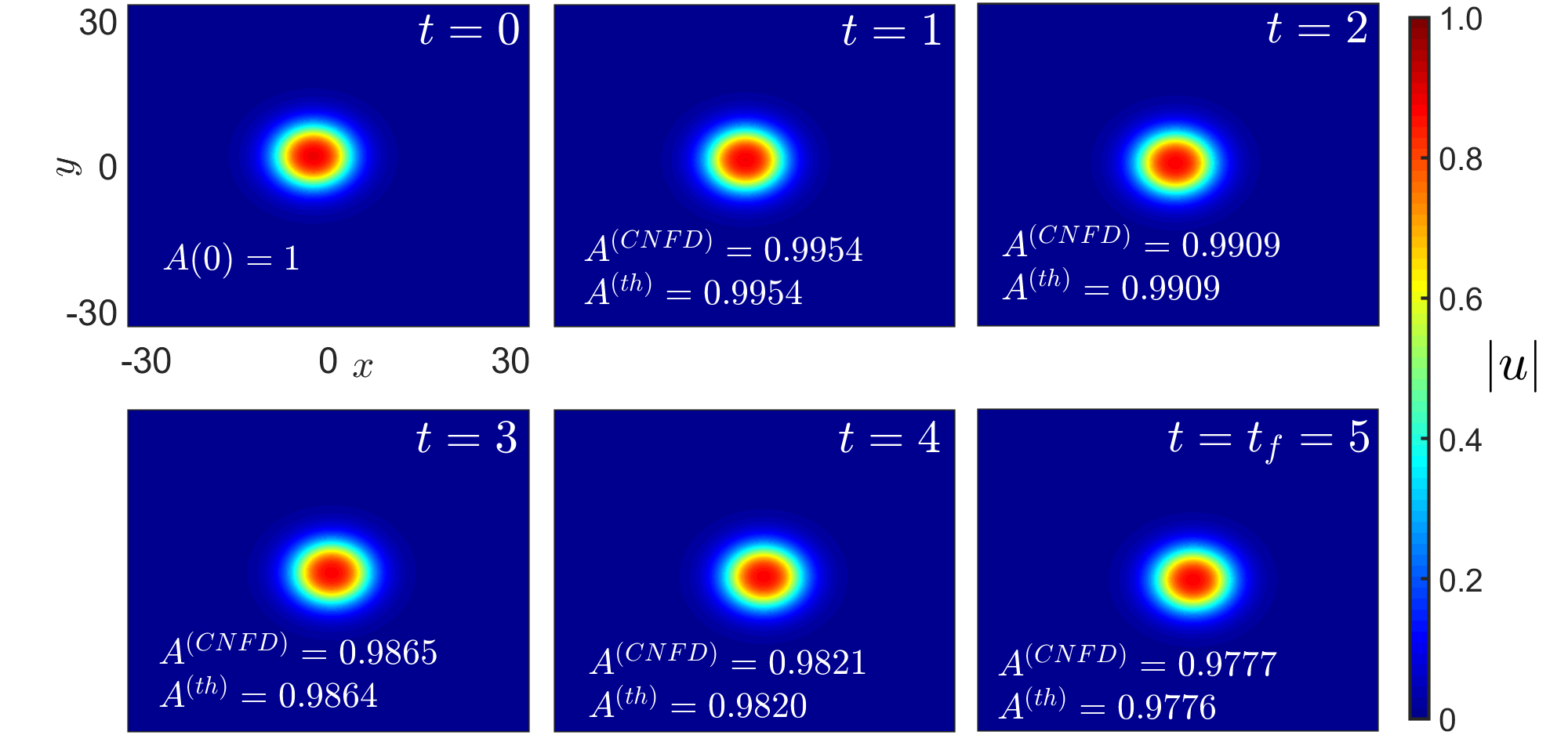} 
	\end{center}
	\caption{(Color online) The contour plot of the initial soliton profile $|u_0(x,y)|$ of (\ref{IC}) and the evolution of its profiles $|u(x,y,t)|$ obtained by simulating (\ref{eq:NLS}) with the CNFD scheme (\ref{CNFD}) for  $h=2^{-4}$ and $\tau=2^{-7}$.
	}
	\label{fig1}
\end{figure}
\begin{table}[!h]
	\centering
	\begin{tabular}{ |c|c|c|c|c|c|} 
		\hline Time
		& R.E. \& & $h=2^{-2}$ &  $h=2^{-3}$ &  $h=2^{-4}$ &  $h=2^{-5}$ \\  
		& Rate  & $\tau=2^{-5}$ &  $\tau=2^{-6}$ &  $\tau=2^{-7}$ &  $\tau=2^{-8}$ \\ 
		\hline
		\multirow{4}{*}{t=1}
		&$E_{2,h}$&1.227E-03  &3.110E-04  &7.992E-05  &2.123E-05 \\ 
		& Rate    & 1.973    &1.946     &1.883     & \\ 
		&$E_{1,h}$&1.851E-03  &4.656E-04  &1.176E-04  &3.022E-05 \\ 
		& Rate    &1.988     &1.980     &1.946     & \\ 
		\hline
		\multirow{4}{*}{t=2}
		&$E_{2,h}$&2.418E-03  &6.122E-04  &1.570E-04  &4.148E-05 \\ 
		& Rate    &1.975     &1.949     &1.893     & \\ 
		&$E_{1,h}$&3.478E-03  &8.751E-04  &2.218E-04  &5.730E-05 \\ 
		& Rate    &1.987     &1.973     &1.935     & \\ 
		\hline
		\multirow{4}{*}{t=3}
		&$E_{2,h}$&3.566E-03  &9.013E-04  &2.305E-04  &6.044E-05 \\ 
		& Rate    &1.978     &1.955     &1.907     & \\ 
		&$E_{1,h}$&4.916E-03  &1.237E-03  &3.141E-04  &8.132E-05 \\ 
		& Rate    &1.987     &1.969     &1.931  & \\ 
		\hline
		\multirow{4}{*}{t=4}
		&$E_{2,h}$&4.658E-03  &1.175E-03  &2.994E-04  &7.793E-05 \\ 
		& Rate    &1.982     &1.962     &1.921     & \\ 
		&$E_{1,h}$&6.212E-03  &1.563E-03  &3.971E-04  &1.028E-04 \\ 
		& Rate    &1.987     &1.969     &1.932     & \\ 
		\hline
		\multirow{4}{*}{t=5}
		&$E_{2,h}$&5.687E-03  &1.432E-03  &3.638E-04  &9.406E-05 \\ 
		& Rate    &1.986     &1.968     &1.934     & \\ 
		&$E_{1,h}$&7.458E-03  &1.877E-03  &4.763E-04  &1.229E-04 \\ 
		& Rate    &1.987     &1.971     &1.938     & \\ 
		\hline
	\end{tabular}
	\caption{The relative errors in measurement of the solution $u(x,y,t)$ and the convergence rate of the CNFD scheme (\ref{CNFD}) with the solitonic initial condition of (\ref{IC}).}\label{table1}
\end{table}

\begin{table}[!h]
	\centering
	\begin{tabular}{ |c|c|c|c|c|} 
		\hline 
		 The schemes & $h=2^{-2}$ &  $h=2^{-3}$ &  $h=2^{-4}$ &  $h=2^{-5}$ \\  
		   & $\tau=2^{-5}$ &  $\tau=2^{-6}$ &  $\tau=2^{-7}$ &  $\tau=2^{-8}$ \\ 
		\hline
  %	\multirow{2}{*}
		CNFD& 11.604 & 85.374 &759.680  & 7045.008\\ 
		 SSFM    & 0.932    &6.576     & 70.216    & 685.598\\ 
		\hline
	\end{tabular}
	\caption{The computational time (measured in seconds) for simulating (\ref{eq:NLS}) for $t=1$ using the CNFD scheme and using the SSFM scheme.}\label{table1_add}
\end{table}

As a concrete example, the simulations are performed with $\lambda =1$, $\nu =1$, and $\varepsilon = 0.01$ on the computational spatial domain $\Omega = [-60,60]\times [-60,60]$. 
One can use the parameters for the initial condition as $(x_0,y_0)=(-2.5, 2.0)$, $(d_1,d_2)=(1, -0.8)$, and $\alpha_0=0$. 
For solving (\ref{elliptic_eq1}) to compute  the ground state $v_0$ in (\ref{IC}), the input function $\tilde v_0 = \sech(X_0^2 + Y_0^2)$
and the beam power value $\tilde P_0 =60$ are numerically generated by using the AITEM scheme. As a result, it yields the power value of the initial soliton $u_0$ as $P=60$ at $\mu=0.1415$.  

First, the simulation results are presented for the mesh size $h=2^{-4}$ and the time step $\tau=2^{-7}$. 
We compare the amplitude parameter $A(t)$ of solitons measured by the CNFD scheme to the ones calculated by the theoretical calculation and by the SSFM scheme, where the theoretical expression for $A(t)$ is given by  \cite{HN2021} 
\begin{align}
	A(t) = A_0\left[1 + 2\epsilon \|v_0\|^{4}_{4}\|v_0\|^{-2}_{2} A_0^4 t \right]^{-1/2}.
	\label{Ath}
\end{align}
The initial soliton profile $|u_0(x,y)|$ of (\ref{IC}) and the evolution of its profiles $|u(x,y,t)|$ measured by the simulation of (\ref{eq:NLS}) using the CNFD scheme of (\ref{CNFD}) are presented in Fig. \ref{fig1}. At each time $t=0,1,2,3,4,$ and at the final time $t_f=5$, the contour plot of the soliton profile is depicted. 
Furthermore, the soliton amplitude parameters $A^{(CNFD)}$ and $A^{(th)}$ are calculated, where $A^{(CNFD)}$ is measured by using the CNFD scheme and $A^{(th)}(t)$ is calculated from (\ref{Ath}).
As can be seen in Fig. \ref{fig1}, the agreement between
the values of $A^{(CNFD)}(t)$ and $A^{(th)}(t)$ is very good. In fact, the relative error in measurement of the soliton amplitude $A(t)$ from the CNFD scheme and the theoretical calculation of (\ref{Ath}), which is defined by $|A^{(CNFD)}(t) - A^{(th)}(t)|/A^{(th)}(t)$, is less than $1.1642$E-4 for $0<t\le t_f$.  
Additionally, we also observe that the relative error in measuring $A^{(CNFD)}(t)$ 
and $A_e(t)$, which is defined by $|A^{(CNFD)}(t) - A_e(t)|/A_e(t)$, is less than $4.1338$E-7 for $0<t\le t_f$, where $A_e$ is calculated by using the SSFM scheme, i.e., $A_e$ is the amplitude parameter of $u^{(num)}_e$.

Second, the simulation results  for varying values of the mesh size $h$ and the time step $\tau$ are presented.
The values of $E_{2,h}$ and $E_{1,h}$ are presented in Table \ref{table1}.  Moreover, on the lines 2 and 4 of each row of the table, one can observe the second order convergence rate of the CNFD scheme for $\|\cdot\|_{2,h}$-norm and $\|\delta^+\cdot\|_{2,h}$-norm, respectively. 
From the observations in simulation results above, our CNFD scheme of (\ref{CNFD}) was validated.
Additionally, the computational time required to simulate equation (\ref{eq:NLS}) for $t=1$ by using the CNFD scheme (\ref{CNFD}) and the SSFM scheme is also presented in Table  \ref{table1_add}. The central processing unit (CPU) used for the simulations is an Intel(R) Core(TM) i5-835U @1.70GHz-1.90GHz. It can be observed that the computational time required using the SSFM scheme is less than that required when using the CNFD scheme. However, as previously mentioned, unlike the CNFD scheme, the SSFM scheme does not hold the conservation of energy \cite{Bao2013a}.

% fig 1_add.
\begin{figure}[!h]
	\begin{center}
		\epsfxsize=12cm  \epsffile{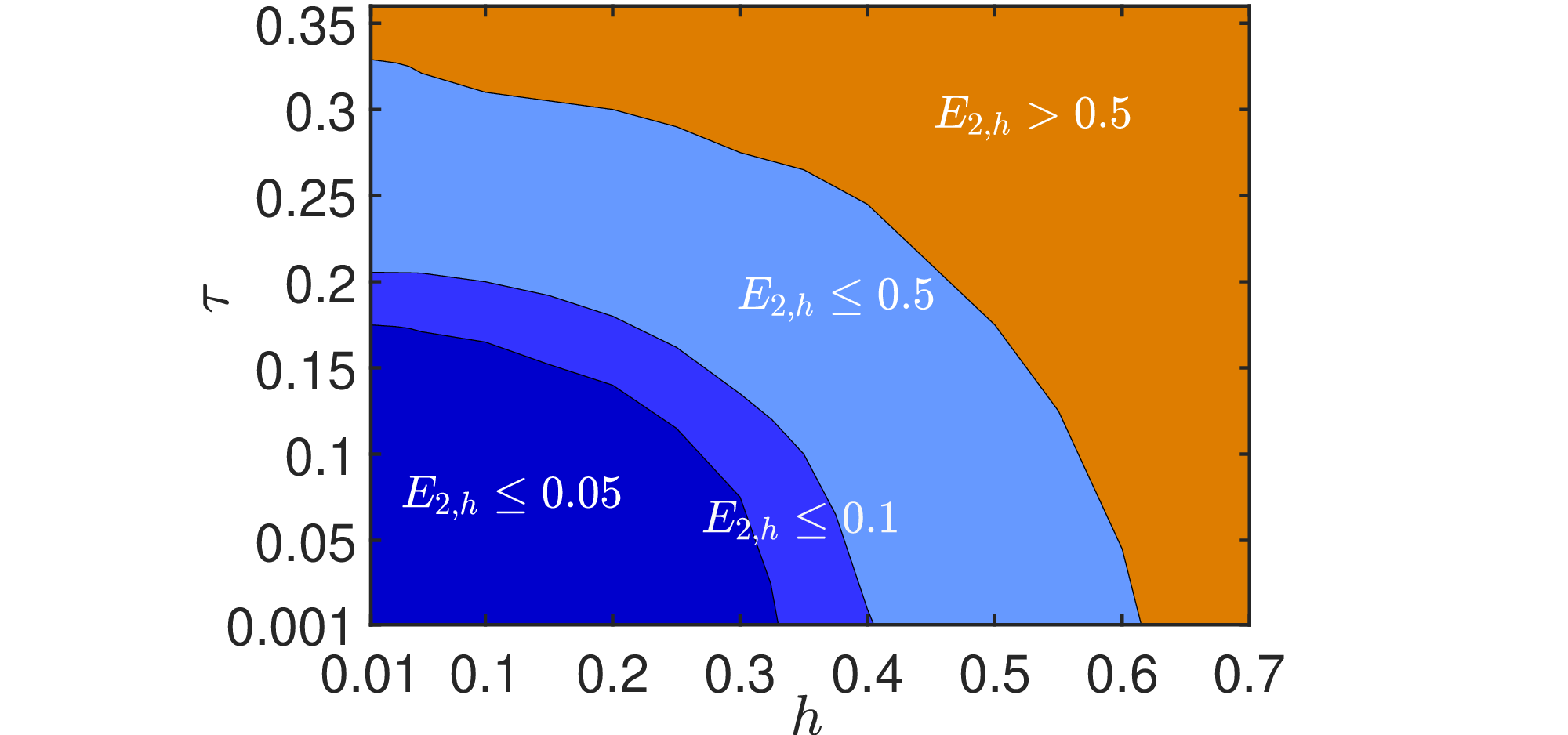} 
	\end{center}
	\caption{(Color online) The relative error $E_{2,h}$ in different regions of $h$ and $\tau$ in simulations using the CNFD scheme with the solitonic initial condition of (\ref{IC}).
	}
	\label{fig1_add}
\end{figure}
% fig 2_add.
\begin{figure}[!h]
	\begin{center}
		\epsfxsize=14.4cm  \epsffile{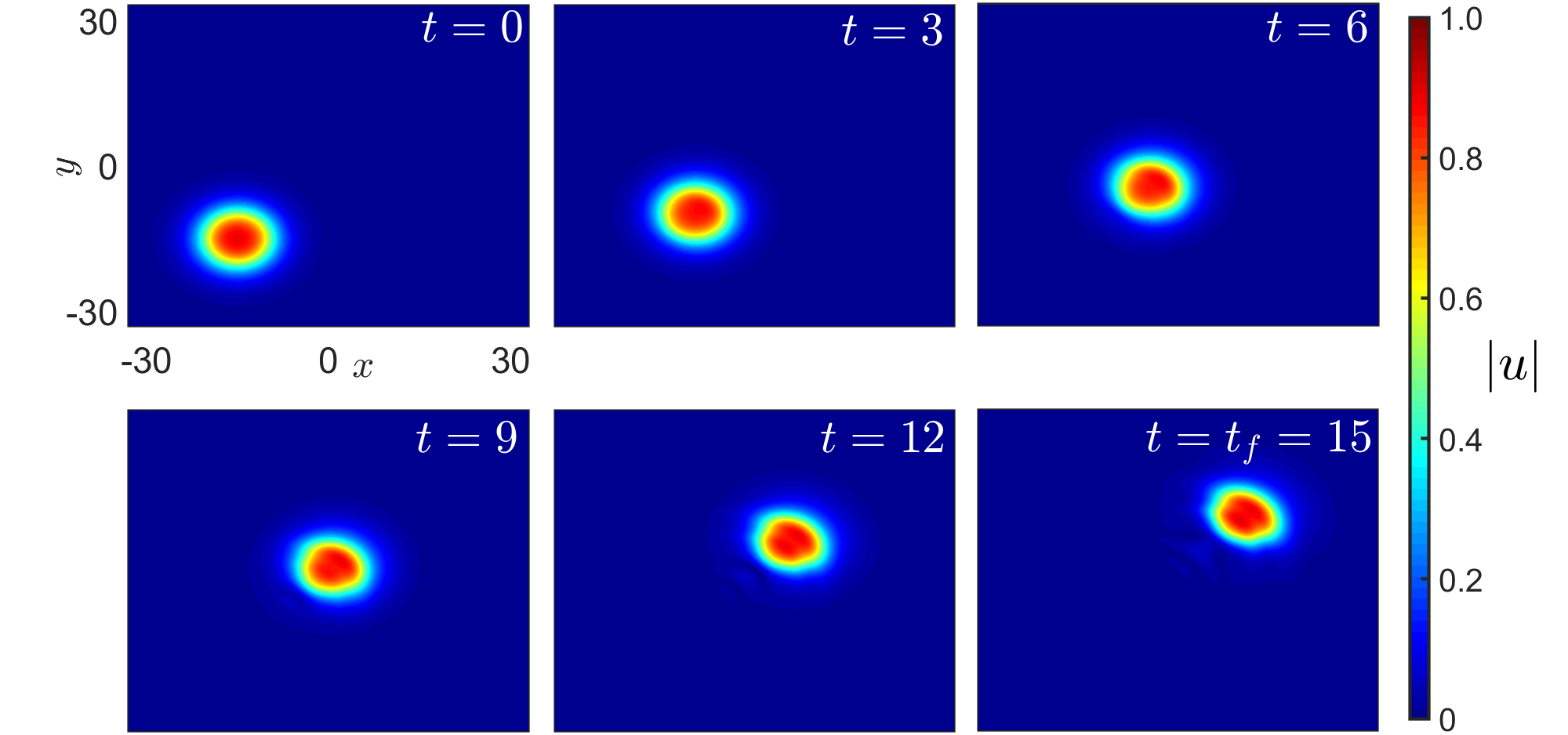} 
	\end{center}
	\caption{(Color online) An illustration for the oscillations in the numerical solution using the CNFD scheme. The contour plots represent the initial soliton profile $|u_0(x,y)|$ of (\ref{IC}) and the evolution of its profiles $|u(x,y,t)|$ obtained by simulating (\ref{eq:NLS}) with $h=2^{-3}$ and $\tau=3h$. 
	}
	\label{fig2_add}
\end{figure}

%Finally, we present the simulation results where the very large values of $E_{2,h}$ can be observed.
Finally, we analyze the relative errors of numerical solutions with varying $h$ and $\tau$. 
The parameters for simulations are $(x_0,y_0)=(-15, -15)$, $(d_1,d_2)=(2, 2)$, $\varepsilon=0.01$, $t_f\ge 10$, $0<h\le 0.9$, and $0<\tau \le 0.5$. Other parameters are the same as ones used in Fig. \ref{fig1} and Table \ref{table1}. Figure \ref{fig1_add} shows 
the parameter space regions of $h$ and $\tau$ where $E_{2,h}\le 0.05$ (the dark blue region), $E_{2,h}\le 0.1$ (the ultramarine blue region), $E_{2,h}\le 0.5$ (the bright blue region), and $E_{2,h}> 0.5$ (the orange region) in simulations with $t_f=10$.
One can observe that $E_{2,h}\le 0.5$ for all small enough values of $h $ and $\tau \lesssim h$. 
%Additionally, to provide some insight into the behavior of the numerical solution obtained by the current CNFD scheme when $E_{2,h} > 0.5$,
Additionally,  to illustrate the oscillations in the numerical solution using the CNFD scheme, 
Fig. \ref{fig2_add} shows the contour plot of the initial soliton profile $|u_0(x,y)|$ of (\ref{IC}) and the evolution of its profiles $|u(x,y,t)|$  with $h=2^{-3}$ and $\tau=3h$. As can be seen, the oscillations of the numerical solution can be clearly observed for $t\ge 9$ with $E_{2,h} > 0.5$.

\subsection{Test 2}
In the second test, we consider the following nonsolitonic initial condition of a Gaussian beam:
\begin{align}
	u_0(x,y)= \frac{2}{\sqrt{\pi}}(x+iy)e^{-\frac{1}{2}(x^2+y^2)}.
	\label{IC2}
\end{align}
We take $\Omega = [-10,10]\times[-10,10]$, $\varepsilon=0.01$, and different values for $\lambda$ and $\nu$. The mesh size and the time size are similar as ones in Test 1. Table \ref{table2} and Table \ref{table3} show the relative error in measurement of the solution $u(x,y,t)$ at the final time of the simulation $T=0.5$. One can observe the order of the convergence in both norms $\|\cdot\|_{2,h}$ and $\|\delta^+\cdot\|_{2,h}$ are approximately of order two for varying values of $\lambda$ and $\nu$ and increase to two while the mesh size and time step tend to 0. Thus, the theoretical results on the order of the convergence in Theorem \ref{thm:est_error_L2H01} are verified. Moreover, in Table \ref{table2} the errors decrease with $\nu\leq 1$ and increase with $\nu >1$. And in Table \ref{table3}, the errors increase along the  $|\lambda|$ from $0.01$ to $5$.

\begin{table}[!ht]
	\centering
	\begin{tabular}{ |c|c|c|c|c|c|} 
		\hline $\lambda,\nu$
		& R.E. \& & $h=2^{-2}$ &  $h=2^{-3}$ &  $h=2^{-4}$ &  $h=2^{-5}$ \\  
		& Rate  & $\tau=2^{-5}$ &  $\tau=2^{-6}$ &  $\tau=2^{-7}$ &  $\tau=2^{-8}$ \\ 
		\hline
		\multirow{4}{3.5em}{$\lambda=1$, $\nu=0.01$}
		&$E_{2,h}$&4.840E-02  &1.236E-02  &3.104E-03  &7.757E-04 \\ 
		& Rate    &1.957     &1.991  &2.001  & \\ 
		&$E_{1,h}$&4.444E-02  &1.183E-02  &3.004E-03  &7.528E-04 \\ 
		& Rate    &1.878     &1.969     &1.995 & \\ 
		\hline
		\multirow{4}{3.5em}{$\lambda=1$, $\nu=0.05$}
		&$E_{2,h}$&4.835E-02  &1.234E-02  &2.989E-03  &7.490E-04 \\ 
		& Rate    &1.958     &1.991  &2.001  & \\ 
		&$E_{1,h}$&4.430E-02  &1.177E-02  &3.100E-03  &7.746E-04 \\ 
		& Rate    &1.881     &1.970     &1.995 & \\ 
		\hline
		\multirow{4}{3em}{$\lambda=1$, $\nu=0.1$}
		&$E_{2,h}$&4.829E-02  &1.232E-02  &3.094E-03  &7.734E-04 \\ 
		& Rate    &1.959     &1.991     &2.001  & \\ 
		&$E_{1,h}$&4.412E-02  &1.170E-02  &2.971E-03  &7.445E-04 \\ 
		& Rate    &1.885     &1.970     &1.995  & \\ 
		\hline
		\multirow{4}{3em}{$\lambda=1$, $\nu=0.5$}
		&$E_{2,h}$&4.790E-02  &1.219E-02  &3.062E-03  &7.657E-04 \\ 
		& Rate    &1.965     &1.991     &1.999  & \\ 
		&$E_{1,h}$&4.291E-02  &1.123E-02  &2.861E-03  &7.191E-04 \\ 
		& Rate    &1.911     &1.961     &1.990  & \\ 
		\hline
		\multirow{4}{3em}{$\lambda=1$, $\nu=1$}
		&$E_{2,h}$&4.766E-02  &1.211E-02  &3.046E-03  &7.627E-04 \\ 
		& Rate    &1.968  &1.987  &1.997  & \\ 
		&$E_{1,h}$&4.201E-02  &1.089E-02  &2.824E-03  &7.156E-04 \\ 
		& Rate    &1.928     &1.929     &1.973  & \\ 
		\hline
		\multirow{4}{3em}{$\lambda=1$, $\nu=2.5$}
		&$E_{2,h}$&4.846E-02  &1.242E-02  &3.167E-03  &7.985E-04 \\ 
		& Rate    &1.951  &1.961  &1.983  & \\ 
		&$E_{1,h}$&4.354E-02  &1.180E-02  &3.372E-03  &8.886E-04 \\ 
		& Rate    &1.845     &1.749     &1.897  & \\ 
		\hline
		\multirow{4}{3em}{$\lambda=1$, $\nu=5$}
		&$E_{2,h}$&5.450E-02  &1.469E-02  &3.882E-03  &9.899E-04 \\ 
		& Rate    &1.855  &1.892  &1.961  & \\ 
		&$E_{1,h}$&5.749E-02  &1.794E-02  &5.627E-03  &1.514E-03 \\ 
		& Rate    &1.602     &1.594     &1.858  & \\ 
		\hline
	\end{tabular}
	\caption{%The relative errors in measurement of discrete solution obtained by the CNFD scheme and SSFM scheme and the convergence rate of the CNFD scheme at time $T=0.5$ for different $\lambda$ and $\nu$
		The relative errors in measurement of the solution $u(x,y,t)$ and the convergence rate of the CNFD scheme (\ref{CNFD}) with the nonsolitonic initial condition of (\ref{IC2}).
	}\label{table2}
\end{table}

\begin{table}[th]
	\centering
	\begin{tabular}{ |c|c|c|c|c|c|} 
		\hline $\lambda,\nu$
		& R.E. \& & $h=2^{-2}$ &  $h=2^{-3}$ &  $h=2^{-4}$ &  $h=2^{-5}$ \\  
		& Rate  & $\tau=2^{-5}$ &  $\tau=2^{-6}$ &  $\tau=2^{-7}$ &  $\tau=2^{-8}$ \\ 
		\hline
		\multirow{4}{3.9em}{$\lambda=0.01$, $\nu=0.1$}
		&$E_{2,h}$&4.822E-02  &1.224E-02  &3.073E-03  &7.690E-04 \\ 
		& Rate    &1.969     &1.992     &1.998  & \\ 
		&$E_{1,h}$&4.189E-02  &1.075E-02  &2.707E-03  &6.780E-04 \\ 
		& Rate    &1.948     &1.986     &1.996  & \\  
		\hline
		\multirow{4}{3.5em}{$\lambda=0.1$, $\nu=0.1$}
		&$E_{2,h}$&4.813E-02  &1.222E-02  &3.066E-03  &7.673E-04 \\ 
		& Rate    &1.970     &1.992     &1.998  & \\ 
		&$E_{1,h}$&4.186E-02  &1.075E-02  &2.705E-03  &6.773E-04 \\ 
		& Rate    &1.948     &1.986     &1.997  & \\  
		\hline
		\multirow{4}{3.5em}{$\lambda=0.5$, $\nu=0.1$}
		&$E_{2,h}$&4.796E-02  &1.219E-02  &3.058E-03  &7.648E-04 \\ 
		& Rate    &1.968     &1.993     &1.999  & \\ 
		&$E_{1,h}$&4.231E-02  &1.095E-02  &2.761E-03  &6.915E-04 \\ 
		& Rate    &1.932     &1.982     &1.997  & \\  
		\hline
		\multirow{4}{3em}{$\lambda=1$, $\nu=0.1$}
		&$E_{2,h}$&4.829E-02  &1.232E-02  &3.094E-03  &7.734E-04 \\ 
		& Rate    &1.959     &1.991     &2.001  & \\ 
		&$E_{1,h}$&4.412E-02  &1.170E-02  &2.971E-03  &7.445E-04 \\ 
		& Rate    &1.885     &1.970     &1.995  & \\ 
		\hline
        \multirow{4}{3.5em}{$\lambda=-1$, $\nu=0.1$}
		&$E_{2,h}$&5.051E-02  &1.296E-02  &3.263E-03  &8.191E-04 \\ 
		& Rate    &1.948  &1.986  &1.992  & \\ 
		&$E_{1,h}$&4.530E-02  &1.217E-02  &3.098E-03  &7.789E-04 \\ 
		& Rate    &1.861     &1.964     &1.989  & \\ 
		\hline
		\multirow{4}{3em}{$\lambda=5$, $\nu=0.1$}
		&$E_{2,h}$&6.750E-02  &1.816E-02  &4.604E-03  &1.148E-03 \\ 
		& Rate    &1.859  &1.972  &2.005  & \\ 
		&$E_{1,h}$&8.450E-02  &2.584E-02  &6.794E-03  &1.713E-03 \\ 
		& Rate    &1.635     &1.902     &1.983  & \\ 
		\hline
        \multirow{4}{3.5em}{$\lambda=-5$, $\nu=0.1$}
		&$E_{2,h}$&7.816E-02  &2.231E-02 & 5.740E-03  &1.450E-03 \\ 
		& Rate    &1.752  &1.943  &1.980  & \\ 
		&$E_{1,h}$&9.070E-02  &2.982E-02  &7.899E-03  &2.000E-03 \\ 
		& Rate    &1.521     &1.888     &1.974  & \\ 
		\hline
	\end{tabular}
	\caption{%The relative errors in measurement of discrete solution obtained by the CNFD scheme and SSFM scheme and the convergence rate of the CNFD scheme at time $T=0.5$ for different $\lambda$ and $\nu$
		The relative errors in measurement of the solution $u(x,y,t)$ and the convergence rate of the CNFD scheme (\ref{CNFD}) with the nonsolitonic initial condition of (\ref{IC2}).
	}\label{table3}
\end{table}

We next consider the discrete energy and mass which are defined in \eqref{df:E_h(U^n)} and right hand side of \eqref{discreteConservativeMass}, respectively. In Fig. \ref{fig:discrete_energy} with $(\lambda,\nu,\varepsilon) = (10,1,5)$, the discrete mass decreases along the time and the discrete energy increases at the beginning time and decreases at the end time. Conversely, the discrete energy and discrete mass are constants along the time with parameter $(\lambda, \nu,\varepsilon)= (10,1,0)$.
\begin{figure}[!ht]
	\centering
	\includegraphics[width=0.9\linewidth]{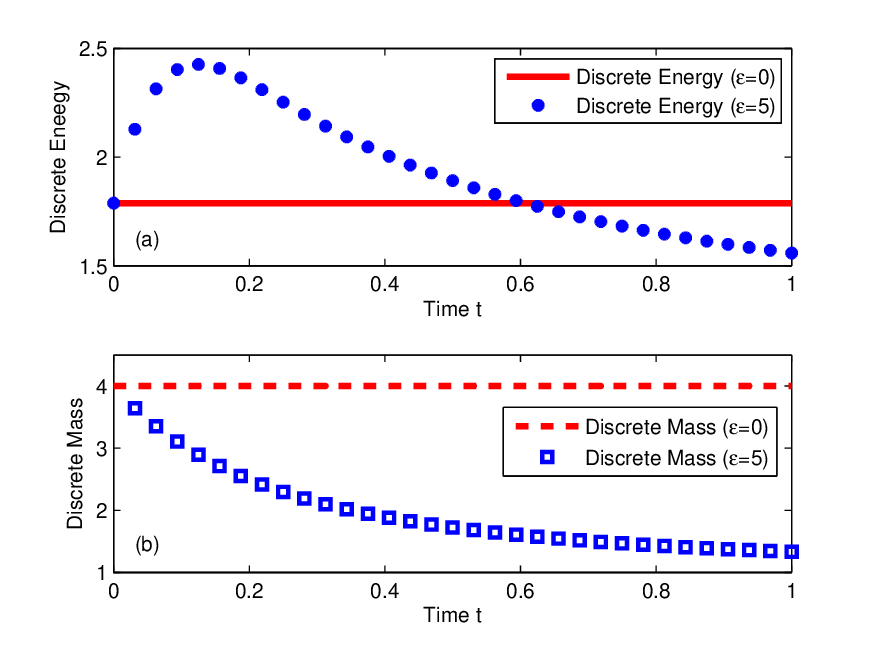}
	\caption{The dependence of discrete energy (a) and discrete mass (b) with respect on time $t$ with $(\lambda,\nu,\varepsilon)= (10,1,5)$ and $(\lambda,\nu,\varepsilon)= (10,1,0)$}
	\label{fig:discrete_energy}
\end{figure}

Finally, we validate the discrete consevation of mass and of energy by calculating the errors between the discrete energy and  continuous energy, and of the discrete mass and continuous mass when $\varepsilon = 0$. The errors between the discrete mass and  energy with continuous mass are defined respectively as following 
\begin{align}
	E^n_{M,h} &= \left|\|U^n\|^2_{2,h}-\|u_0\|^2_{L^2(\Omega)}\right| \label{df_error_mass},\\
	E_{E,h}^n &= \left|E_h(U^n)-E(u_0)\right|, \label{df_error_enegy}
\end{align}
where
\begin{align*}
	E(u_0) = \|\nabla u_0\|^2_{L^2(\Omega}-\lambda\|F_1(|u_0|^2)\|_{L^1(\Omega)}+\nu \|F_2(|u_0|^2)\|_{L^1(\Omega},
\end{align*}
with $F_1$ and $F_2$ are defined in \eqref{df_f1f2_F1F2}. In Table \ref{table7}, the errors between the discrete and continuous mass are extremely small of order $\sim 10^{-13}$ along the time $[0,1]$. This indicates that the errors are essentially impacted by round-off error and the error for solving nonlinear systems. The results demonstrate that the errors between the discrete and continuous energy are almost constant along the time $[0,1]$ and converge in the second order. 
\begin{table}
	\centering
	\begin{tabular}{ |c|c|c|c|c|c|} 
		\hline Time
		& R.E. \& & $h=2^{-2}$ &  $h=2^{-3}$ &  $h=2^{-4}$ &  $h=2^{-5}$ \\  
		& Rate  & $\tau=2^{-5}$ &  $\tau=2^{-6}$ &  $\tau=2^{-7}$ &  $\tau=2^{-8}$ \\ 
		\hline
		\multirow{4}{*}{t=0.25}
		&$E_{M,h}$ &7.37E-14  &6.22E-15  &1.49E-13  &3.20E-14 \\ 
		&$E_{E,h}$ &9.32E-02  &2.34E-02  &5.86E-03 &1.46E-03 \\ 
		& Rate     &1.991  &1.997  &1.999  & \\ 
		\hline
		\multirow{4}{*}{t=0.5}
		&$E_{M,h}$ &7.55E-14  &3.82E-14 & 5.52E-13  &5.60E-13 \\ 
		&$E_{E,h}$ &9.32E-02  &2.34E-02  &5.86E-03 & 1.46E-03 \\ 
		& Rate     &1.991  &1.997  &1.999  & \\ 
		\hline
		\multirow{4}{*}{t=0.75}
		&$E_{M,h}$ &9.50E-14  &2.98E-13  &7.25E-13  &6.71E-13 \\ 
		&$E_{E,h}$ &9.32E-02  &2.34E-02  &5.86E-03 & 1.46E-03 \\ 
		& Rate     &1.991  &1.997  &1.999  & \\ 
		\hline
		\multirow{4}{*}{t=1}
		&$E_{M,h}$ &1.71E-13  &5.84E-13  &9.05E-13 &1.49E-12 \\ 
		&$E_{E,h}$ &9.32E-02  &2.34E-02  &1.47E-03 & 1.467E-03 \\ 
		& Rate     &1.990  &1.995  &1.996  & \\
		\hline
	\end{tabular}
	\caption{The errors between the discrete mass and energy with their continuous parts are defined in \eqref{df_error_mass} and \eqref{df_error_enegy}. }\label{table7}
\end{table}

\section{Conclusion}
\label{Con}

The (2+1)D cubic-quintic NLS equation can stabilize the 2D solitons against its collapse at high speed and high power. In this work, we proposed and analyzed the CNFD scheme for the (2+1)D CQNLS equation with cubic damping, which is of a common class of (2+1)D NLS equation with modified nonlinearity. We showed the existence of a solution by using the Brouwer fixed point theorem. We established a few appropriate settings and estimations to prove the uniqueness of the numerical solution of the (2+1)D CQNLS equation. Moreover, under a mild condition of $\tau\lesssim h$, we showed that the convergence rate is of order $O(\tau^2+h^2)$ in the $L^2-$norm and discrete $H^1-$norm. To validate the numerical scheme and the convergence order, we intensively simulated the (2+1)D CQNLS model by the CNFD scheme for an initial condition of a 2D soliton and also for a nonsolitonic initial condition of a Gaussian beam with varying spatial mesh size $h$ and time step $\tau$. Additionally, we implemented the CNFD to validate a theoretical expression for the amplitude dynamics of single 2D soliton propagation under the effect of cubic damping. Furthermore, we also verified the numerical results of the CNFD scheme with the conventional SSFM scheme. We obtained an excellent agreement between the numerical simulation results and the theoretical results. We also validated that the discrete energy and discrete mass are constants along the time for the unperturbed CQNLS with $\varepsilon=0$. The current work has reported for the first time the rigorous analysis of the CNFD scheme for a perturbed $(2+1)$D CQNLS equation with nonlinear damping. This work significantly contributed to the numerical approaches for a class of $(n+1)$D NLS equations with modified nonlinearity in a high dimensional space. We expect that it also opens a way to solve the $(3+1)$D perturbed CQNLS and the possibility to extend the Crank-Nicolson scheme in time-space by a modified Crank-Nicolson scheme of $\delta u^{n+1} +(1-\delta) u^{n-1}$, where $0<\delta<1$, for the perturbed $(2+1)$D CQNLS equation and for related NLS-type equation in a higher spatial dimension.

\appendix 
\section{Estimate the consistency error}\label{Appendix}
In this Appendix, we establish the estimations of the consistency error of the scheme (\ref{soltuonV_globallylipschtiz}) or (\ref{CNFD}), which is used to prove the convergence rate in the $L^2-$norm and discrete $H^1-$norm in Theorem \ref{thm:est_error_L2H01}.
\begin{App}\label{lm:est_rn}
	Let $u$ be an exact solution to (\ref{eq:NLS}) and let $r^n\in X_{JK}$ defined by \eqref{df:rnjk} be the consistency error of the scheme of (\ref{CNFD}) or (\ref{soltuonV_globallylipschtiz}). If $u$ is smooth enough, then there holds
	\begin{align*}
		&\max_{j,k,n}|r^n_{j,k}| \lesssim \tau^2+h^2,\\
		&\max_{j,k,n}|\delta^+r^n_{j,k}| \lesssim \tau^2+h^2.
	\end{align*}
\end{App}
\begin{proof}
	From the denotation of $r_{j,k}^n$, for all $(j,k)\in\T_{JK}$ and $n\in\overline{0,N-1}$, it yields
	\begin{align}
		\!r^n_{j,k} = i\delta^+_t u^{n}_{j,k}
		+\frac{1}{2}(\delta^2_{x}u^{n+1}_{j,k} + \delta^2_{x}u^{n}_{j,k}) + \frac{1}{2}(\delta^2_{y}u^{n+1}_{j,k} + \delta^2_{y}u^{n}_{j,k}) +\lambda\psi_1(u^{n+1}_{j,k},u^{n}_{j,k}) \nonumber\\
		-\nu\psi_2(u^{n+1}_{j,k},u^{n}_{j,k}) +i\varepsilon \varphi(u^{n+1}_{j,k},u^{n}_{j,k}),\label{def_rjkn1}
	\end{align}
	with $u^{n}_{j,k}=u(x_j,y_k,t_{n})$ for all $(j,k)\in\T_{JK}^0$ and $n\in\overline{0,N}$. Now  the terms on the right hand side in \eqref{def_rjkn1} are estimated as follows. Applying the Taylor expansion, there holds
	\begin{align}
		\delta^+_t u^{n}_{j,k} - \partial_t u(x_j,y_k,t_{n+1/2}) = \frac{\tau^2}{8}\int_0^1s^2\int_0^1\partial_{ttt}u(x_j,y_k,(t_{n+1/2}-\frac{s\tau}{2})+s_1s\tau)ds_1ds, \label{ineq:est_dt}
	\end{align}
	%	Taking 
	%	\begin{align*}
		%        \delta_x^+\delta^+_t u^{n}_{j,k} - \frac{u_t(x_j,y_k,t_{n+1/2})-u_t(x_{j+1},y_k,t_{n+1/2})}{2} =  \frac{\tau^2}{8}\int_0^1s^2\int_0^1\int_0^1\partial _{tttx}u(x_j+s_2(x_{j+1}-x_j),y_k,(t_{n+1/2}-\frac{s\tau}{2})+s_1s\tau)ds_2ds_1ds
		%    \end{align*}
	where $t_{n+1/2}$ is the midpoint of $(t_{n},t_{n+1})$. It is similar, one has
	\begin{align*}
		&\delta^2_{x}u^{n}_{j,k} - \partial_{xx}u(x_j,y_k,t_{n}) = \frac{h^2}{6}\int_0^1s^3(\partial_{xxxx}u(x_j+sh,y_k,t_n)+ (\partial_{xxxx}u(x_j-sh,y_k,t_n))ds,\\
		&\delta^2_{x}u^{n+1}_{j,k} - \partial_{xx}u(x_j,y_k,t_{n+1}) = \frac{h^2}{6}\int_0^1s^3(\partial_{xxxx}u(x_j+sh,y_k,t_{n+1})+ \partial_{xxxx}u(x_j-sh,y_k,t_{n+1}))ds,\\
		& u_{xx}(x_j,y_k,t_{n}) + \partial_{xx}u(x_j,y_k,t_{n+1})-2\partial_{xx}u(x_j,y_k,t_{n+1/2}) \\
		&= \frac{\tau^2}{2}\int_0^1 s\int_0^1 \partial_{ttxx}u(x_j,y_k,t_{n+1/2}-\frac{s\tau}{2}+ss_1\tau)ds_1ds.
	\end{align*}
	Summing the previous three equations, one can arrive at
	\begin{align}
		\delta^2_{x}u^{n}_{j,k} + &\delta^2_{x}u^{n+1}_{j,k} -2\partial_{xx}u(x_j,y_k,t_{n+1/2})= \frac{h^2}{6}\int_0^1s^3(\partial_{xxxx}u(x_j+sh,y_k,t_n)+ \partial_{xxxx}u(x_j-sh,y_k,t_n))ds\nonumber\\
		&+ \frac{h^2}{6}\int_0^1s^3(\partial_{xxxx}u(x_j+sh,y_k,t_{n+1})+ \partial_{xxxx}u(x_j-sh,y_k,t_{n+1}))ds\nonumber\\
		&+ \frac{\tau^2}{2}\int_0^1 s\int_0^1 \partial_{ttxx}u(x_j,y_k,t_{n+1/2}-\frac{s\tau}{2}+ss_1\tau)ds_1ds.\label{ineq:est_dxx}
	\end{align}
	Similarly, one obtains
	\begin{align}
		\delta^2_{y}u^{n}_{j,k} + &\delta^2_{y}u^{n+1}_{j,k} -2\partial_{yy}u(x_j,y_k,t_{n+1/2})= \frac{h^2}{6}\int_0^1s^3(\partial_{yyyy}u(x_j,y_k+sh,t_n)+ \partial_{yyyy}u(x_j,y_k-sh,t_n))ds\nonumber\\
		&+ \frac{h^2}{6}\int_0^1s^3(\partial_{yyyy}u(x_j,y_k+sh,t_{n+1})+ \partial_{yyyy}u(x_j,y_k-sh,t_{n+1}))ds\nonumber\\
		&+ \frac{\tau^2}{2}\int_0^1 s\int_0^1 \partial_{ttyy}u(x_j,y_k,t_{n+1/2}-\frac{s\tau}{2}+ss_1\tau)ds_1ds.\label{ineq:est_dyy}
	\end{align}
	For $f\in C^3(\mathbb{R},\mathbb{R})$, we have
	\begin{align}
		&\int^1_0f(|u^{n}_{j,k}|^2+s(|u^{n+1}_{j,k}|^2-|u^{n}_{j,k})|^2))ds
		-f\big(\frac{1}{2}(|u^{n+1}_{j,k}|^2+|u^{n}_{j,k}|^2)\big)\nonumber\\
		&= 2(|u(x^{n+1}_{j,k}|^2-|u^{n}_{j,k}|^2)^2\int_0^{1/2}(s-\frac{1}{2})^2\int_0^1s_1\int_0^1 f''(g^n_{j,k}(s,s_1,s_2))ds_2ds_1ds,\label{eq:inter_f}
	\end{align} 
	where
	\begin{align*}
		g^n_{j,k}(s,s_1,s_2) = \frac{1}{2}(|u^{n+1}_{j,k}|^2+|u^{n}_{j,k}|^2)-s_1(s-\frac{1}{2})(|u^{n+1}_{j,k}|^2-|u^{n}_{j,k})|^2)+2s_2s_1(s-\frac{1}{2}(|u^{n+1}_{j,k}|^2-|u^{n}_{j,k}|^2)).
	\end{align*}
	Moreover, it also has
	\begin{align*}
		|u^{n+1}_{j,k}|^2-|u^{n}_{j,k}|^2 =\tau\int_0^1\partial_{t}|u(x_j,y_k,t_n+s\tau)|^2ds.
	\end{align*}
	Therefore,
	\begin{align*}
		&\int^1_0f(|u^{n}_{j,k}|^2+s(|u^{n+1}_{j,k}|^2-|u^{n}_{j,k})|^2))ds
		-f\big(\frac{1}{2}(|u^{n+1}_{j,k}|^2+|u^{n}_{j,k}|^2)\big)\\
		&= 2\tau^2\bigg[\int_0^1\partial_{t}|u(x_j,y_k,t_n+s\tau)|^2ds\bigg]^2\int_0^{1/2}(s-\frac{1}{2})^2\int_0^1s_1\int_0^1 f''(g^n_{j,k}(s,s_1,s_2))ds_2ds_1ds.
	\end{align*} 
	% We then have the following relations.
	%	There exists $s_{5}\in(t_n,t_{n+1})$ such that
	%	\begin{align}
		%		&\bigg|\int^1_0f(|u(x_j,y_k,t_{n})|^2+t(|u(x_j,y_k,t_{n+1})|^2-|u(x_j,y_k,t_{n})|^2))dt\nonumber\\
		% & -f\bigg(\frac{|u(x_j,y_k,t_{n+1})|^2+|u(x_j,y_k,t_{n})|^2}{2}\bigg)\bigg|	\leq \frac{||u(x_j,y_k,s_{5})|^2_t|^2}{24}\tau^2.\label{ineq:est_intf}
		%	\end{align} 
	Using the mean value theorem, there holds  
	\begin{align*}
		&f\big(\frac{1}{2}(|u^{n+1}_{j,k}|^2+|u^{n}_{j,k}|^2)\big)-f(|u(x_j,y_k,t_{n+1/2})|^2)
		\\
		&=\big[\frac{1}{2}(|u^{n+1}_{j,k}|^2+|u^{n}_{j,k}|^2)-|u(x_j,y_k,t_{n+1/2})|^2\big]\int_0^1f'(g_{1,j,k}^n(s_3))ds_3,
	\end{align*}
	where 
	\begin{align*}
		g_{1,j,k}^n(s_2) = |u(x_j,y_k,t_{n+1/2})|^2+s_2(\frac{1}{2}(|u^{n+1}_{j,k}|^2+|u^{n}_{j,k}|^2) - |u(x,y,t_{n+1/2})|^2),
	\end{align*}
	and 
	\begin{align*}
		\frac{1}{2}(|u^{n+1}_{j,k}|^2+|u^{n}_{j,k}|^2)-|u(x_j,y_k,t_{n+1/2})|^2=\frac{\tau^2}{2}\int_0^1s\int_0^1\partial_{tt}|u(x_j,y_k,t_{n+1/2}-\frac{s\tau}{2}+s_1s\tau)|^2ds_1ds.
	\end{align*}
	Then
	\begin{align}
		&f\big(\frac{1}{2}(|u^{n+1}_{j,k}|^2+|u^{n}_{j,k}|^2)\big)-f(|u(x_j,y_k,t_{n+1/2})|^2) \nonumber \\
		& = \frac{\tau^2}{2}\int_0^1s\int_0^1\partial_{tt}|u(x_j,y_k,t_{n+1/2}-\frac{sh}{2}+s_1sh)|^2ds_1ds\int_0^1f'(g_{1,j,k}^n(s_2))ds_2.\label{eq:f_midpoint}
	\end{align}
	From \eqref{eq:inter_f}  and \eqref{eq:f_midpoint}, it implies
	\begin{align}
		&\int^1_0f(|u^{n}_{j,k}|^2+s(|u^{n+1}_{j,k}|^2-|u^{n}_{j,k})|^2))ds
		-f(|u(x_j,y_k,t_{n+1/2})|^2)\nonumber\\
		&= 2\tau^2\bigg[\int_0^1\partial_{t}|u(x_j,y_k,t_n+s\tau)|^2ds\bigg]^2\int_0^{1/2}(s-\frac{1}{2})^2\int_0^1s_1\int_0^1 f''(g^n_{j,k}(s,s_1,s_2))ds_2ds_1ds\nonumber\\
		&+\frac{\tau^2}{2}\int_0^1s\int_0^1\partial_{tt}|u(x_j,y_k,t_{n+1/2}-\frac{sh}{2}+s_1sh)|^2ds_1ds\int_0^1f'(g_{1,j,k}^n(s_2))ds_2.\label{eq:inter_f1}
	\end{align}
	Applying the Taylor expansion, it yields
	\begin{align}
		u^{n+1/2}_{j,k} =u(x_j,y_k,t_{n+1/2}) +\frac{\tau^2}{2}\int_0^1s\int_0^1\partial_{tt}u(x_j,y_k,t_n+1/2-\frac{\tau s}{2}+s_1s\tau)ds_1ds.\label{eq:u^{n+1/2}_{j,k}}
	\end{align} 
	Using the equations \eqref{eq:inter_f1} and \eqref{eq:u^{n+1/2}_{j,k}}, one can calculate $\psi_1(u^{n+1}_{j,k},u^{n}_{j,k})$ and  $\psi_2(u^{n+1}_{j,k},u^{n}_{j,k})$ as follows
	\begin{align}
		&u^{n+1/2}_{j,k}\int^1_0f(|u^{n+1}_{j,k}|^2+s(|u^{n+1}_{j,k})|^2-|u^{n}_{j,k}|^2))ds-u(x_j,y_k,t_{n+1/2})f(|u(x_j,y_k,t_{n+1/2})|^2) \nonumber\\
		&=2\tau^2u^{n+1/2}_{j,k}\bigg[\int_0^1\partial_t|u(x_j,y_k,t_n+s\tau)|^2ds\bigg]^2\int_0^{1/2}(s-\frac{1}{2})^2\int_0^1s_1\int_0^1 f''(g^n_{j,k}(s,s_1,s_2))ds_2ds_1ds \nonumber\\
		&+\frac{\tau^2}{2}u^{n+1/2}_{j,k}\int_0^1s\int_0^1\partial_{tt}|u(x_j,y_k,t_{n+1/2}-\frac{sh}{2}+s_1sh)|^2ds_1ds\int_0^1f'(g_{1,j,k}^n(s_2))ds_2 \nonumber \\
		&+\frac{\tau^2}{2}f(|u(x_j,y_k,t_{n+1/2})|^2)\int_0^1 s\int^1_0\partial_{tt}u(x_j,y_k,t_{n+1/2}-\frac{s\tau}{2}+s_1s\tau)ds_1ds .\label{eq:psi}
	\end{align} 
	From  the equality \eqref{eq:u^{n+1/2}_{j,k}}, there holds 
	\begin{align*}
		|u^{n+1/2}_{j,k}|^2& =|u(x_j,y_k,t_{n+1/2})|^2+\tau^2\int_0^1s\int_0^1\Re(\bar{u}(x_j,y_k,t_{n+1/2})\partial_{tt}u(x_j,y_k,t_{n+1/2}-\frac{\tau s}{2}+s_1s\tau))ds_1ds\\
		&+\bigg|\frac{\tau^2}{2}\int_0^1s\int_0^1\partial_{tt}u(x_j,y_k,t_n+1/2-\frac{\tau s}{2}+s_1s\tau)ds_1ds\bigg|^2.
	\end{align*}
	From the previous equation and \eqref{eq:u^{n+1/2}_{j,k}}, the term $\varphi(u^{n+1}_{j,k},u^{n}_{j,k})$ can be rewritten as follows
	\begin{align}
		&|u^{n+1/2}_{j,k}|^2u^{n+1/2}_{j,k} -|u(x_j,y_k,t_{n+1/2})|^{2}u(x_j,y_k,t_{n+1/2})
		=\nonumber\\
		&\tau^2 u^{n+1/2}_{j,k}\bigg[\int_0^1s\int_0^1\Re(\bar{u}(x_j,y_k,t_{n+1/2})\partial_{tt}u(x_j,y_k,t_{n+1/2}-\frac{\tau s}{2}+s_1s\tau))ds_1ds\nonumber\\
		&+ \bigg|\frac{\tau}{2}\int_0^1s\int_0^1\partial_{tt}u(x_j,y_k,t_n+1/2-\frac{\tau s}{2}+s_1s\tau)ds_1ds\bigg|^2\bigg]\nonumber\\
		&+\frac{\tau^2}{2}|u(x_j,y_k,t_{n+1/2})|^{2}\int_0^1s\int_0^1\partial_{tt}u(x_j,y_k,t_{n+1/2}-\frac{\tau s}{2}+s_1s\tau)ds_1ds.\label{eq:mloss}
	\end{align}	
	Using $f=f_1$ with $f'(s) =1$, $f''(s)=0$ and $f=f_2$ with $f'(s)=2s$, $f''_2(s)=2$ for all $s\geq 0$, from equations \eqref{ineq:est_dt}, \eqref{ineq:est_dxx}, \eqref{ineq:est_dyy}, \eqref{eq:psi}, \eqref{eq:mloss}, the definition of $r^n_{j,k}$, and the first equation \eqref{eq:NLS} at the point $(x_j,y_k,t_{n+1/2})$, for $(j,k)\in \T_{JK}$, we have
	\begin{align*}
		|r^n_{j,k}|\lesssim h^2[\|\partial_{xxxx}u\|_{\infty} + \|\partial_{yyyy}u\|_{\infty} ]+  \tau^2[\tau^2\|u\|_{L^\infty}\|\partial_{tt}u\|^2_{\infty}+\|\partial_{ttt}u\|_{\infty}+\|\partial_{xxtt}u\|_{\infty} +\|\partial_{yytt}u\|_{L^\infty}   \\ 
		+(\|u\|^2_{\infty}+\|u\|^4_{\infty})\|\partial_{tt}u\|_{\infty} +(\|u\|_{\infty}+\|u\|^3_{\infty})\|\Re(\bar{u}\partial_{tt} u) +|\partial_t u|^2\|_{\infty} 
		+\|u\|_{L^\infty}\|u\partial_t u\|^2_{\infty} ].
	\end{align*}
	This completes the first estimation of \ref{lm:est_rn}.\\
	We note that $u=\partial_{tt} u=\partial_{xx}u=\partial_{yy} u= \partial_{xxxx}u=\partial_{yyyy} u =0$ on the boundary $\partial\Omega$. By performing the calculations similarly for $(j,k)\in \T_{JK}$, one can obtain
	\begin{align*}
		|\delta^+ r^n_{j,k}|\lesssim h^2 [\|\partial_{xxxx}\nabla u\|_{\infty} +\|\partial_{yyyy}\nabla u\|_{\infty}] + \tau^2[\|\partial_{ttt}\nabla u\|_{\infty}+\|\partial_{xxtt}\nabla u\|_{\infty} + \|\partial_{yytt}\nabla u\|_{\infty} \\ 
		+(\|u\nabla u\|_{\infty}+\|u^3\nabla u\|_{\infty})\|\partial_{tt} u\|_{\infty}+ (\|u\|^2_{\infty}+\|u\|^4_{\infty})\|\partial_{tt}\nabla u\|_{\infty}\\
		+\|\nabla u\|_{\infty}(1+\|u\|^2_{\infty})\|\Re(\bar{u}\partial_{tt} u) +|\partial_t u|^2\|_{\infty}+ (\|u\|_{\infty}+\|u\|^3_{\infty})\|\nabla(\Re(\bar{u}\partial_{tt} u) +|\partial_t u|^2)\|_{\infty}  \\
		+\|\nabla u\|_{\infty}\|u\partial_t u\|^2_{L^\infty} +\|u\|_{L^\infty}\|u\partial_t u\nabla(u\partial_t u)\|_{\infty}
		+\tau^2(\|\nabla u\|_{\infty}\|\partial_{tt}u\|^2_{\infty} +\| u\|_{\infty}\|\partial_{tt}u\partial_{tt}\nabla u\|_{\infty})] .
	\end{align*}
	This completes the second estimation of \ref{lm:est_rn}. %\qed 
\end{proof}

%\section*{References}
{}

\end{document}